\def\R{{\mathbb R}}
\def\N{{\mathbb N}}
\def\bnu{\boldsymbol\nu}
\def\bpsi{\boldsymbol\psi}
\def\btau{\boldsymbol\tau}
\def\bdeta{\boldsymbol\eta}
\def\bphi{\boldsymbol\varphi}
\def\bsigma{\boldsymbol\sigma}
\newtheorem {definition} {Definition}
  \newtheorem {remark}     {Remark}
  \newtheorem {theorem}    {Theorem}
  \newtheorem {lemma}      {Lemma}
  \newtheorem {corollary}  {Corollary}
  \newtheorem {proposition}{Proposition}
  \def\soft#1{\leavevmode\setbox0=\hbox{h}\dimen7=\ht0\advance
    \dimen7 by-1ex\relax\if t#1\relax\rlap{\raise.6\dimen7
    \hbox{\kern.3ex\char'47}}#1\relax\else\if T#1\relax
    \rlap{\raise.5\dimen7\hbox{\kern1.3ex\char'47}}#1\relax
    \else\if d#1\relax\rlap{\raise.5\dimen7\hbox{\kern.9ex
    \char'47}}#1\relax\else\if D#1\relax\rlap{\raise.5\dimen7
    \hbox{\kern1.4ex\char'47}}#1\relax\else\if l#1\relax
    \rlap{\raise.5\dimen7\hbox{\kern.4ex\char'47}}#1\relax
    \else\if L#1\relax\rlap{\raise.5\dimen7\hbox{\kern.7ex
    \char'47}}#1\relax\else\message{accent \string\soft
    \space #1 not defined!}#1\relax\fi\fi\fi\fi\fi\fi} 
  \def\ocirc#1{\ifmmode\setbox0=\hbox{$#1$}\dimen0=\ht0
    \advance\dimen0 by1pt\rlap{\hbox to\wd0{\hss\raise\dimen0
    \hbox{\hskip.2em$\scriptscriptstyle\circ$}\hss}}#1\else
    {\accent"17 #1}\fi}
\begin{document}
\title{\sc{Existence of a weak solution to a fluid-elastic structure interaction problem with the Navier slip boundary condition}}

\author{ Boris Muha \thanks{Department of Mathematics,
    University of Zagreb, 10000 Zagreb, Croatia,
    borism@math.hr} \and Sun\v{c}ica \v{C}ani\'{c}\thanks{Department of Mathematics,
    University of Houston, Houston, Texas 77204-3476,
    canic@math.uh.edu} }

\date{}

\maketitle

\begin{abstract}
We study a nonlinear, moving boundary fluid-structure interaction (FSI) problem between an incompressible, viscous Newtonian fluid, modeled by the 2D Navier-Stokes equations, and an elastic structure modeled by the shell or plate equations. The fluid and structure are coupled via the {\em Navier slip boundary condition} and balance of contact forces at the fluid-structure interface. The slip boundary condition might be more realistic than the classical no-slip boundary condition in situations, e.g., when the structure is ``rough'', and in modeling FSI dynamics near, or at a contact. Cardiovascular tissue and cell-seeded tissue constructs, which consist of grooves 
in tissue scaffolds that are lined with cells, are examples of ``rough'' elastic interfaces interacting with an incompressible, viscous fluid. The problem of heart valve closure is an example of a FSI problem with a contact involving elastic interfaces.  We prove the existence of a weak solution to this class of problems by designing a constructive proof based on the time discretization via operator splitting. This is the first existence result for fluid-structure interaction problems involving elastic structures satisfying the Navier slip boundary condition. 
\end{abstract}

\section{Introduction}
We study a nonlinear, moving boundary fluid-structure interaction (FSI) problem between a viscous, incompressible Newtonian fluid,
modeled by the Navier-Stokes equations, and an elastic shell or plate. 
The fluid and structure are coupled through two coupling conditions: the Navier slip boundary condition and the continuity of contact forces
at the fluid-structure interface. The Navier slip boundary condition states that the difference, i.e., the slip between the tangential components of the fluid  and structure 
velocities is proportional to the tangential component of the fluid normal stress evaluated at the fluid-structure interface, while the normal components
of the fluid and structure velocities are continuous. 
The main motivation  for using the Navier slip boundary condition
comes from fluid-structure interaction (FSI) problems involving elastic structures with ``rough'' boundaries, and from studying FSI problems near a contact.

FSI problems involving elastic structures with rough boundaries appear, for example when studying FSI between blood flow and 
cardiovascular tissue, whether natural or bio-artificial, which is lined with cells that are in direct contact with blood flow. 
Bio-artificial vascular tissue constructs (i.e., vascular grafts) often times involve cells
seeded on tissue scaffolds with grooved microstructure, which interacts with blood flow. 
See Figure~\ref{motivation}. To filter out the small scales of the rough fluid domain boundary, 
effective boundary conditions based on the Navier slip condition have been used 
in various applications, see e.g., a review paper by Mikeli\'{c} \cite{MikelicReview}, 
and \cite{Masmoudi2010,Bucur2010,JagerMikelic2001}. 
Instead of using the no-slip condition at the groove-scale on the rough boundary, the Navier slip condition is applied at the smooth boundary instead.
\begin{figure}[ht]
\centering{
\includegraphics[scale=0.3]{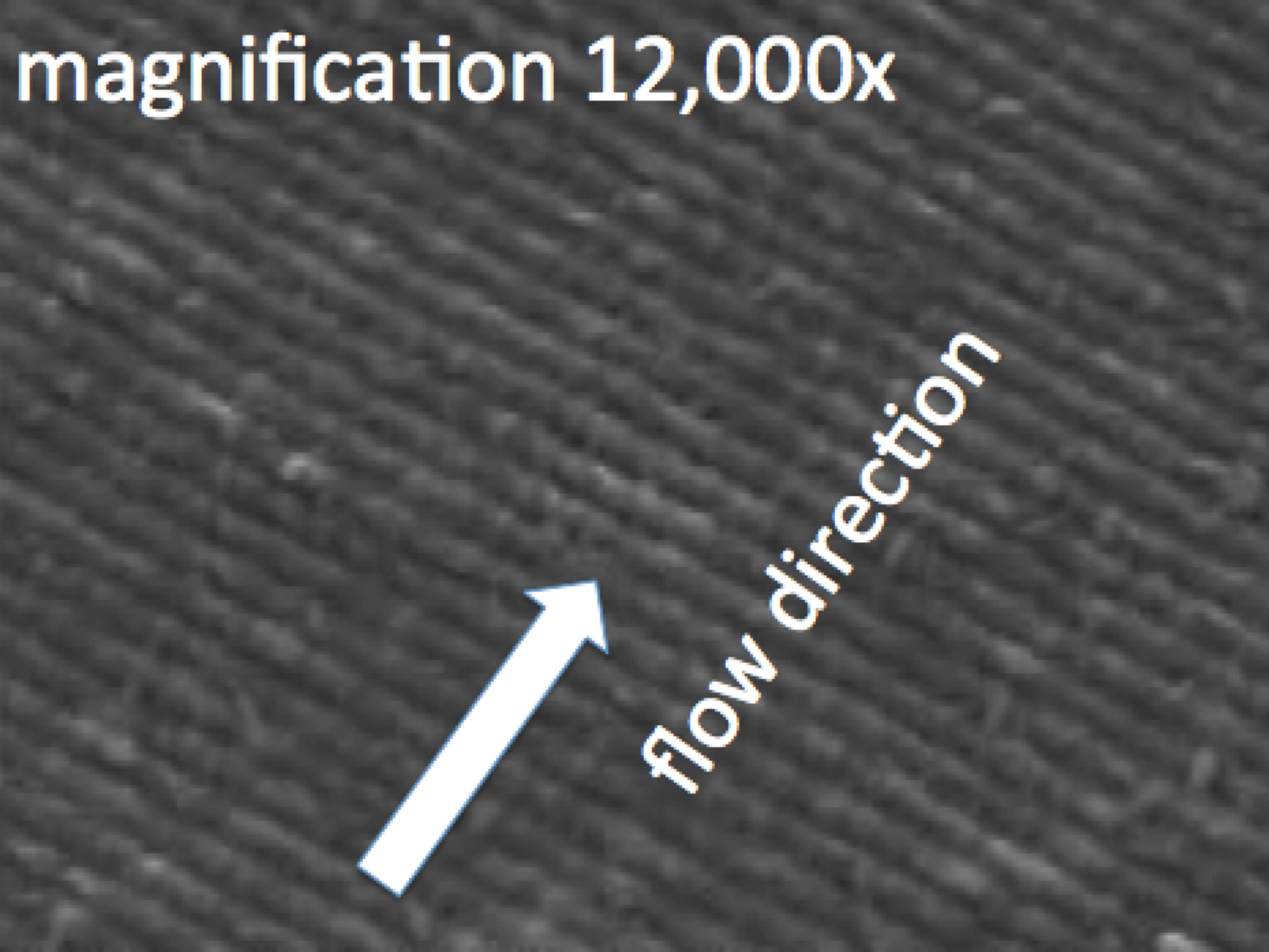}
\hskip 0.3in
\includegraphics[scale=0.42]{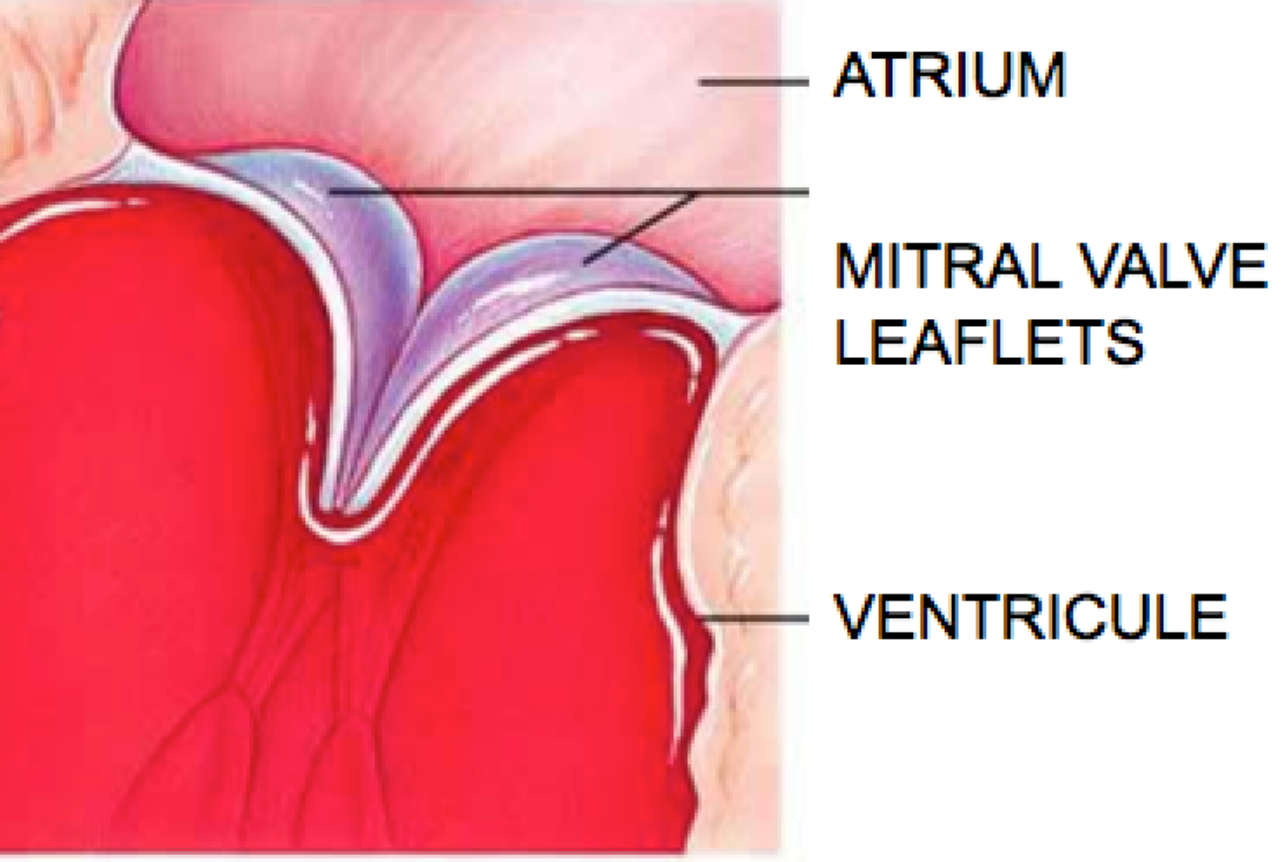}
}
\caption{\small{Two examples of fluid-elastic structure interaction problems where the Navier slip condition may be more appropriate. 
Left: An image of a nano-patterned film surface for vascular tissue engineering  \cite{Zorlutuna}
giving rise to a ``rough'' surface interacting with blood flow.
 The grooves in this tissue construct are to be lined with cells, producing a bio-artificial vascular graft for a replacement of diseased arteries. 
 The grooves are perpendicular to the direction of blood flow. 
 Right: A sketch of a closed mitral valve interacting with blood flow. Modeling the contact between leaflets with the no-slip condition 
gives rise to leaky valves due to the no-slip condition paradox in collision of smooth bodies \cite{Berlyand,HilTak2,HilTak,Starovoitov}.}}
\label{motivation}
\end{figure}

Another motivation for using the Navier slip boundary condition comes from studying problems near  or at a contact (or collision). It has been shown recently
that the no-slip condition is not a realistic physical  condition to model contact between smooth rigid bodies
immersed in an incompressible fluid 
\cite{Berlyand,HilTak2,HilTak,Starovoitov}. 
It was shown that two smooth rigid bodies cannot touch each other if the no-slip boundary condition is considered.
One solution to this no-collision paradox is to consider bodies with ``non-smooth'' boundaries, in which case collisions can occur \cite{GVHil}. 
The other explanation for the no-collision paradox is that the no-slip boundary condition does not describe near-contact dynamics well, and a new model and/or a different boundary condition, such as for example, the Navier slip boundary condition, need to be employed to model 
 contact between bodies/structures interacting while immersed in an incompressible, viscous fluid \cite{NePen}. 
Examples include applications in cardiovascular sciences, for example,  modeling the closure of  heart valves. 
It is well known that numerical simulation of heart valve closure suffers from the ``numerical'' leakage of blood through a ``closed'' heart valve
whenever the no-slip boundary condition is used. Different kinds of ``gap'' boundary conditions have been used to get around this difficulty, see e.g., \cite{Doyle}. 
 Considering the Navier slip boundary condition near or at the closure would provide a more realistic modeling of the problem. 

 From the mathematical analysis point of view, the fist step in the direction of studying the Navier slip boundary condition near or at a contact
 was made by Neustupa and Penel \cite{NePen} who proved that when the no-slip boundary condition is replaced with the slip boundary condition, collision can occur for a prescribed movement of {\em rigid bodies}. 
 Recently, G\'erard-Varet and Hillairet considered an FSI problem involving a movement of a rigid solid 
 immersed in an incompressible Navier-Stokes flow with the slip boundary condition and proved the existence of a weak solution up to collision \cite{GVHil3}. 
 In a subsequent work \cite{GVHil2} they proved that prescribing the slip boundary condition on both the rigid body boundary and the boundary of the domain, allows 
 collision of the rigid body with the boundary. 
 The existence of a {\sl global} weak solution which permits collision of a ``smooth'' rigid body with a ``smooth'' fluid domain boundary was proved in  \cite{NecasovaSlip}. 
For completeness, we also mention several recent works where the slip boundary condition was considered in various 
existence results  for FSI problems involving rigid bodies and Newtonian fluids \cite{MT3,PlanasSueur,WangFluidSolid}. 
All the above-mentioned works consider FSI between rigid bodies and an incompressible, viscous fluid. 
 To the best of our knowledge there are no existence results for non-linear moving boundary FSI problems involving elastic structures
 satisfying the Navier slip boundary condition.  
 The present work is the {\em  first existence result involving the 
Navier slip boundary condition for a fluid-structure interaction problem
with elastic structures}. 

Classical FSI problems with the no-slip boundary condition have been extensively studied from both the analytical and numerical point of view (see e.g. \cite{FSIforBio,GaldiHandbook,BorSun} and the references within). 
Earlier works have focused on problems in which the coupling between the fluid and structure
was calculated at a fixed fluid domain boundary,
see \cite{Gunzburger}, and  \cite{BarGruLasTuff2,BarGruLasTuff,KukavicaTuffahaZiane},
where an additional nonlinear coupling term was added and calculated at a fixed fluid interface.
A study of well-posedness for FSI problems between an incompressible, viscous fluid and an elastic/viscoelastic structure satisfying the no-slip boundary condition,
with the coupling evaluated at a moving interface, started with the result of daVeiga  \cite{BdV1}, where existence of a strong solution was obtained locally in time 
for an interaction between a $2D$ fluid and a $1D$ viscoelastic string, assuming periodic boundary conditions.
This result was extended by Lequeurre in \cite{Leq11,Leq13}, where the existence of a unique, local in time, strong solution
for any data, and the existence of a global strong solution for small data, was proved in the case when the structure was modeled as a clamped viscoelastic beam. 

D.~Coutand and S.~Shkoller proved existence, locally in time, of a unique, regular solution for
an interaction between a viscous, incompressible fluid in $3D$ and a $3D$ structure, immersed in the fluid,
where the structure was modeled by the equations of linear  elasticity satisfying no-slip at the interface \cite{CSS1}. 
In the case when the structure (solid) is modeled by a linear wave equation, I. Kukavica et al. proved the existence, locally in time, of a strong solution, 
assuming lower regularity  for the initial data \cite{Kuk,Kuk2,IgnatovaKukavica}. 
A similar result for compressible flows can be found in \cite{KukavicaNSLame}. 
In \cite{raymond2013fluid} Raymod et al. considered 
a FSI problem between a linear elastic solid immersed in an incompressible viscous fluid, 
and proved the existence and uniqueness of a strong solution.
All the above mentioned existence results for strong solutions are local in time.
Recently, in \cite{ignatova2014well} a global existence result for small data was obtained for a similar  moving boundary FSI problem 
but with additional interface and structure damping terms.

In the context of weak solutions incorporating the no-slip condition, 
the following results have been obtained. 
Existence of a weak solution for a FSI problem between a $3D$ incompressible, viscous fluid 
and a $2D$ viscoelastic plate was shown by Chambolle et al. in \cite{CDEM}, while
Grandmont improved this result in \cite{CG}  to hold for
a $2D$ elastic plate. These results were extended to a more general geometry in \cite{LenRuz},
and to a non-Newtonian shear dependent fluid in \cite{LukacovaCMAME}.
In these works existence of a weak solution was proved for as long as the elastic boundary does not touch "the bottom" (rigid)
portion of the fluid domain boundary. 

Muha and \v{C}ani\'{c}  recently proved the existence of  a weak solution to a class of FSI problems modeling the flow
of an incompressible, viscous, Newtonian fluid flowing through a 2D cylinder whose lateral wall was modeled by 
either the linearly viscoelastic, or by the linearly elastic Koiter shell equations \cite{BorSun}, assuming nonlinear coupling at
the deformed fluid-structure interface. These results were extended by the same authors to a 3D FSI problem involving a cylindrical  Koiter shell \cite{muha2013nonlinear},
and to a semi-linear cylindrical Koiter shell \cite{BorSunNonLinear}. The main novelty in these works was a design of a constructive existence proof
based on the Lie operator splitting scheme, which has been used in numerical simulation of several FSI problems  \cite{GioSun,MarSun,BorSun,Martina_Biot,Lukacova,LukacovaCMAME}, and has proven to be a robust method for a design of constructive existence proofs
for an entire class of FSI problems.

In the present work a further non-trivial extension of the Lie operator splitting scheme is introduced to deal with the Navier slip boundary condition and
with the non-zero longitudinal displacement of the structure. Dealing with the Navier slip condition and non-zero longitudinal displacement introduces 
several mathematical difficulties. In contrast with the no-slip boundary condition which ``transmits'' the regularizing mechanism 
by the viscous fluid dissipation
onto the fluid-structure interface, in the Navier slip condition the tangential components of the fluid and structure velocities are no longer continuous, and thus information is lost in the tangential direction.
As a result, new compactness arguments had to be designed in the existence proof to control the tangential velocity components at the interface.
The compactness arguments are based on Simon's characterization of compactness in $L^2(0,T;B)$ spaces \cite{Simon},  
and on interpolation of the classical Sobolev spaces with real exponents $H^s$ (or alternatively Nikolskii spaces $N^{s,p} \cite{Simon2})$. This is new. If we had worked with continuous energy estimate, we would have been able
to obtain structure regularity in the standard space $L^\infty(0,T;H^2(\Gamma))$. 
The time-discretization via operator splitting, however, enabled us to obtain an additional estimate in time
that is due to the dissipative term in the backward Euler approximation of the time derivative of the structure velocity. 
The uniform boundedness of this term enabled us to obtain structure regularity in $H^s(0,T;H^2(\Gamma))$, $s < 1/2$.
This was crucial for the existence proof. This approach in general brings new information about the time-behavior of weak solutions
to the elastic structure problems.

Furthermore, to deal with the non-zero longitudinal displacement and keep the behavior of fluid-structure interface ``under control'', 
we had to consider higher-order terms in the structure model
given by the bending rigidity of {\em shells}. 
The linearly elastic membrane model was not tractable. 
Due to the non-zero longitudinal displacement additional nonlinearities appear in the problem
that track the  geometric quantities such as the fluid-structure interface surface measure, 
the interface tangent and normal, and the change in the moving fluid domain measure (given by the Jacobian of the ALE mapping
mapping the moving domain onto a fixed, reference domain). 
These now appear explicitly in the weak formulation of the problem, and cause various difficulties 
 in the analysis. This is one of the reasons why our existence result is local in time, i.e., it holds for the time interval $(0,T)$ for
 which we can guarantee that the fluid domain does not degenerate
 in the sense that the ALE mapping remains injective in time
 as the fluid domain moves, and the Jacobian of the ALE mapping remains strictly positive, see Figure~\ref{degeneration}. 
 Therefore, in this manuscript we prove the existence, locally in time,  of a weak solution, 
 to a nonlinear moving-boundary problem between an incompressible, viscous Newtonian fluid and an elastic shell or plate, satisfying
 the Navier slip condition at the fluid-structure interface, and balance of forces at the fluid-structure interface.

\section{Problem description}
We study the flow of an incompressible, viscous fluid through a 2D fluid domain whose boundary contains an elastic,
thin structure. 
The fluid and structure are fully coupled through two coupling conditions:  the Navier slip boundary condition,
and the dynamic coupling condition describing the balance of forces at the elastic structure interface. 
The flow is driven by the data, which includes the case of the 
 time-dependent dynamic pressure data prescribed at the ``inlet and outlet'' portion of the fixed boundary, denoted in 
 Figure~\ref{fig:domain} as $\Gamma_1$ and $\Gamma_3$. 
The reference fluid domain, denoted by $\Omega \subset \R^2$, is  considered to be a polygon with angles less than of equal to $\pi$,
see Figure~\ref{fig:domain}. 
If we denote by $\Gamma_i, i = 0,\dots,m$ the faces of $\Omega$, without the loss of generality we 
can assume that $\Gamma_0$ is compliant, and $\Gamma_1,\dots,m$ is the rigid portion. We denote 
the rigid portion by ${\Sigma = \bigcup_{i=1}^m\overline{\Gamma_i}}$, and the compliant portion by $\Gamma=\Gamma_0 = (0,L)$.
\begin{figure}[ht]
\centering{
\includegraphics[scale=0.4]{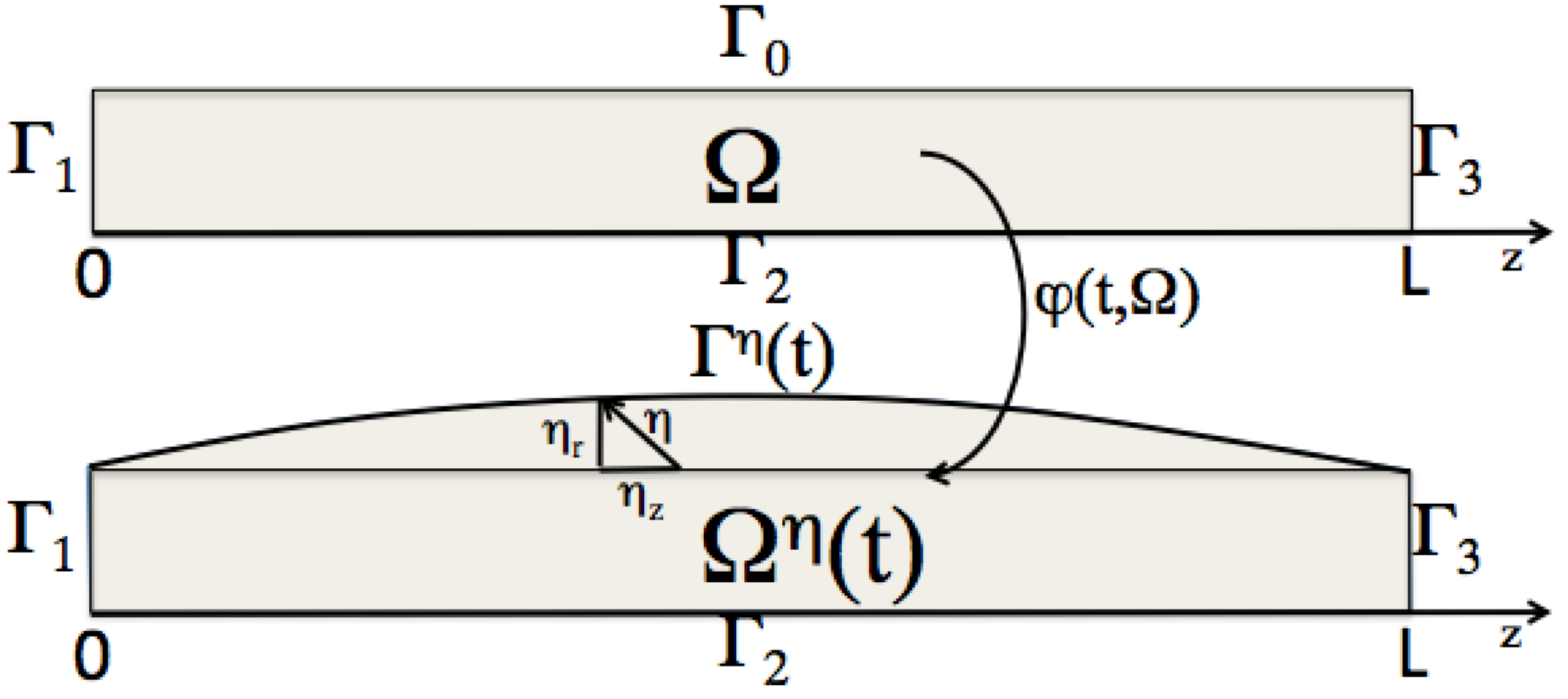}
}
\caption{An example of fluid and structure domains.}
\label{fig:domain}
\end{figure}

As the fluid flows through the compliant domain, the elastic part of the boundary deforms, giving rise to a time-dependent fluid domain
which is not known {\sl a priori}. 
We denote by $\bphi(t,.):\Omega\to \R^2$, $t\in [0,T)$ the time-dependent deformation of the fluid domain determined
by the interaction between fluid flow and the elastic part of the fluid domain boundary. 
We will be assuming that $\bphi$ is such that the rigid portion $\Sigma$ of the fluid domain remains fixed,
and that  the fluid domain does not degenerate in the sense that
 the elastic structure does not touch any part of the boundary during deformation. 
More precisely, we will be assuming that $\bphi$ is a $C^1$ diffeomorphism such that 
$$\bphi_{|\Sigma}={\bf id} \ {\rm and}\ {\rm det}\nabla\bphi(t,{\bf x})>0, \ (t,{\bf x})\in [0,T)\times\overline{\Omega}.$$
We denote the displacement of the elastic part of the boundary $\Gamma_0$ by 
$\bdeta(t,{\bf x})=\bphi(t,{\bf x})-{\bf x},\;{\bf x}\in \Gamma_0$. 
Since $\Gamma_0$ can be identified by the interval $(0,L)$, as mentioned above, $\bdeta$  is defined
as a mapping
$\bdeta:[0,T)\times [0,L]\to \R^2$, with
\begin{equation}\label{clamped}
\bdeta(t,z)=(\eta_z(t,z),\eta_r(t,z)),\; z\in [0,L],\; \eta(0)=\partial_z\eta(0)=\eta(L)=\partial_z \eta(L)=0.
\end{equation}
Here $\eta_z$ and $\eta_r$ denote the tangential and normal components of displacement with respect to the reference configuration $\Gamma_0$, respectively,
and the last set of conditions in \eqref{clamped} state that the elastic structure is clamped at the points at which it meets the 
rigid portion of the boundary $\Sigma$. 
We denote by $\Omega^{\eta}(t)=\bphi(t,\Omega)$ the deformed fluid domain at time $t$, 
and by $\Gamma^{\eta}(t)=\bphi(t,\Gamma_0)$ the corresponding deformed elastic part of the boundary of $\Omega^{\eta}(t)$. 
We chose to include $\bdeta$ as a superscript in the notation for the deformed fluid domain and for the deformed elastic structure to emphasize that 
they both depend on one of the unknowns in the problem, which is the structure displacement $\bdeta$.
The following notation will be useful in subsequent calculations.
The surface element of the deformed structure will be denoted by:
$$
d\Gamma^{\eta}=\sqrt{(1+\partial_z\eta_z(t,z))^2+(\partial_r\eta_r(t,z))^2}dz=S^{\eta}(t,z)dz,
$$
the tangent vector to the deformed structure will be denoted by ${\btau}^{\eta}(t,z)=\partial_z\bphi^{\eta}(t,z)$, 
and the outer unit normal 
on $\Gamma^\eta(t)$ at point $\bphi(t,z)$ will be denoted by $\bnu^{\eta}(t,z)$.

\if 1 = 0
\begin{remark}[About the domain] TODO Rewrite this*******
We can also look at the more general domain, for example a non-convex curvilinear polygon. The only thing where we need this regularity in our analysis is the regularity of the solution to the Dirichlet boundary value problem for the Laplace equation (see problem~\eqref{ALEDef}). However, since this is not in the focus of our study, in order to simplify the presentation we chose not to go the optimal results, but to work with the results from classical monograph of Grisvard~\cite{Grisvard2}.
\end{remark}
\fi

{\bf The fluid.} The fluid flow is governed by the Navier-Stokes equations for an incompressible, viscous fluid defined
on the family of time-dependent domains $\Omega^{\eta}(t)$:
\begin{equation}
\left .
\begin{array}{rcl}
\rho_F (\partial_t{\bf u}+{\bf u}\cdot\nabla{\bf u})&=&\nabla\cdot\bsigma,
\\
\nabla\cdot{\bf u}&=&0,
\end{array}
\right \}\ {\rm in}\ \Omega^{\eta}(t),\ t\in (0,T),
\label{NS}
\end{equation}
where $\rho_F$ denotes the fluid density, ${\bf u}$ is the fluid velocity,
$\boldsymbol\sigma=-p{\bf I}+2\mu{\bf D}({\bf u})$ is the fluid Cauchy stress tensor, $p$ is the fluid pressure,
$\mu$ is the kinematic viscosity coefficient, and ${\bf D}({\bf u})=\frac 1 2(\nabla{\bf u}+\nabla^{\tau}{\bf u})$ is the symmetrized gradient of ${\bf u}$.

The approach presented in this manuscript can handle different types of boundary conditions prescribed on the rigid boundary $\Sigma$. 
More precisely, on each face $\Gamma_i$ of the {\em rigid boundary} $\Sigma$ we prescribe one of the following four types of boundary conditions:
\begin{enumerate}
\item Dynamic pressure: $p+\frac{\rho_F}{2}|{\bf u}|^2=P_{i}, {\bf u}\cdot\btau=0$, prescribed on $\Gamma_i, i \in I$,
\item Velocity (no-slip): ${\bf u}={\bf 0}$, prescribed on $\Gamma_i, i \in II$,
\item The Navier slip boundary condition: ${\bf u}\cdot\bnu=0, {\bf u}\cdot\btau+\alpha_i\boldsymbol\sigma\bnu\cdot\btau=0$,
prescribed on $\Gamma_i, i \in III$,
\item The symmetry boundary condition: ${\bf u}\cdot\bnu=0,\; \partial_\nu u_{\tau}=0$, prescribed on $\Gamma_i, i\in IV$,
\end{enumerate}
where $I$, $II$, $III$ and $IV$ denote the subsets of the set of indices $\{1,\dots,m\}$ such that 
the boundary condition of type $1$, $2$, $3$ or $4$ 
is satisfied. 
We note that non-homogeneous boundary conditions can also be handled with additional care.

The problem is supplemented with the initial condition:
\begin{equation}\label{NSIC}
\begin{array}{c}
{\bf u}(0,.)={\bf u}_0.
\end{array}
\end{equation}

To close the problem, it remains to specify the boundary conditions on the elastic part of the boundary $\Gamma^\eta(t)$.
For this purpose we introduce the elastodynamics equations modeling the motion of the elastic structure, and the two-way coupling
between the structure and the fluid motion.

{\bf The structure.} The elasto-dynamics of {\bf thin structure} $\Gamma^\eta(t)$ 
will be given in terms of displacement 
with respect to the reference configuration $\Gamma=\Gamma_0=(0,L)$ (Lagrangian formulation). 
To include different shell models, we formulate the elastodynamics problem in terms of 
 a general  {\em continuous, self-adjoint, coercive, linear operator} ${\cal L}_e$, defined on $H_0^2(0,L)$, 
for which there exists a constant $c > 0$ such that 
\begin{equation}\label{coercivity}
\left<{\cal{L}}_e \bdeta,\bdeta \right> \ge c  \| \bdeta \|^2_{H_0^2(\Gamma)}, \quad \forall \bdeta \in H_0^2(\Gamma),
\end{equation}
where $\left< \cdot,\cdot \right>$ is the duality pairing between $H_0^2$ and $H^{-2}$.
The structure elastodynamics problem is then given by:
\begin{eqnarray}
\rho_{S} h  \partial_{tt} \bdeta = -{\cal L}_e \bdeta + {\bf f}, \quad & z\in \Gamma=(0,L),\ t\in (0,T),
\label{Koiter}
\\
\bdeta(t,0)=\partial_z\bdeta(t,0)=\bdeta(t,L)=\partial_z\bdeta(t,L)=0,\; & t\in (0,T),
\label{KoiterBC}
\\
\bdeta(0,z)=\bdeta_0,\; \partial_t\bdeta(0,z)={\bf v}_0,; & z\in \Gamma=(0,L).
\nonumber
\end{eqnarray}
where $\rho_S$ is the structure density, $h$ the elastic shell thickness, ${\bf f}$ is linear force density acting on the shell, $\bdeta=(\eta_z,\eta_r)$ is the shell displacement, and $\bdeta_0$ and ${\bf v}_0$ are the initial structure displacement and the initial structure velocity, respectively. 


The fluid and structure equations are coupled via the following two sets of {\bf coupling conditions}:
\begin{itemize}
\item {\bf The\  kinematic\  coupling\ condition (Navier slip condition):}
\begin{equation}\label{Coupling1a}
\begin{array}{l}
{\rm Continuity \ of \ normal\ velocity\;on\;}\Gamma^{\eta}(t):\\
\displaystyle{\partial_t\bdeta(t,z)\cdot\bnu^{\eta}(t,z)} = \boldsymbol{u}(\bphi(t,z))\cdot\bnu^{\eta}(t,z), 
\quad
(t,z) \in (0,T)\times\Gamma,
\\  \\
 {\rm The\  slip\ condition \ between\ the \ fluid\ and\ thin\ structure\;on\;}\Gamma^{\eta}(t):\\
\displaystyle{(\partial_t\bdeta(t,z)-{\bf u}(\bphi(t,z)))\cdot\btau^{\eta}(t,z)}
\\
\displaystyle{=\alpha\bsigma\big (\phi(t,z)\big )\bnu^{\eta}(t,z)\cdot\btau}^{\eta}(t,z),\quad (t,z)\in (0,T)\times\Gamma.
\end{array}
\end{equation}
\item {\bf The\  dynamic\  coupling\ condition:}
\begin{equation}
\label{Coupling1b}
\rho_{S} h \partial_{tt}\bdeta(t,z) =  -{\mathcal L}_e\bdeta(t,z) -S^{\eta}(t,z)\bsigma\big (\bphi(t,z)\big )\bnu^{\eta}(t,z),\; (t,z)\in (0,T)\times\Gamma,
\end{equation}
stating that the structure interface elastodynamics is driven by the jump in the normal stress across the interface, where we have assumed,
without the loss of generality, 
that the normal stress on the outside of the structure is equal to zero. The term $S^\eta$,
which multiplies the normal fluid stress $\bsigma \bnu^\eta$, is the Jacobian of the transformation between the Eulerian 
and Lagrangian formulations
of the fluid and structure problems, respectively.
\end{itemize}
Notice that there is no pressure contribution in the slip condition \eqref{Coupling1a}.
The pressure contributes only through the dynamic coupling condition \eqref{Coupling1b}.

In summary, we study the following problem.

\vskip 0.2in

\noindent
\framebox{
\parbox{6.0in}{\hskip 0.15in 

\noindent 
Find $({\bf u},\bdeta)$ such that the following holds:

\noindent
{\bf The fluid equations:}
\begin{equation}
\left .
\begin{array}{rcl}
\rho_F (\partial_t{\bf u}+{\bf u}\cdot\nabla{\bf u})&=&\nabla\cdot\bsigma,
\\
\nabla\cdot{\bf u}&=&0,
\end{array}
\right \}\ {\rm in}\ \Omega^{\eta}(t),\ t\in (0,T);
\label{NSP}
\end{equation}
{\bf The elastic structure (boundary conditions on $(0,T)\times\Gamma$):}
\begin{eqnarray}
\rho_{S} h \partial_{tt}\bdeta(t,z) +{\mathcal L}_e\bdeta(t,z) &=& -S^{\eta}(t,z)\bsigma\big (\bphi(t,z)\big )\bnu^{\eta}(t,z),
\label{ElasticP}
\\
\displaystyle{\partial_t\bdeta(t,z)\cdot\bnu^{\eta}(t,z)} &=& \boldsymbol{u}(\bphi(t,z))\cdot\bnu^{\eta}(t,z),
\\
\displaystyle{(\partial_t\bdeta(t,z)-{\bf u}(\bphi(t,z))\cdot\btau^{\eta}(t,z)} &=& \alpha\bsigma\big (\bphi(t,z)\big )\bnu^{\eta}(t,z)\cdot\btau^{\eta}(t,z),
\label{SlipP}
\end{eqnarray}
with
\begin{equation*}
\bdeta(t,0)=\partial_z\bdeta(t,0)=\bdeta(t,L)=\partial_z\bdeta(t,L)=0,\; t\in (0,T);
\end{equation*}
{\bf Boundary conditions on $\Sigma$:}
\begin{equation}
\begin{array}{rl}
p+\frac{\rho_F}{2}|{\bf u}|^2 =P_{i},\; {\bf u}\cdot\btau=0\; &{\rm on}\;\Gamma_i,\;i\in I,\\
{\bf u}={\bf 0}\; &{\rm on}\;\Gamma_i,\;i\in II,\\
{\bf u}\cdot\bnu =0, {\bf u}\cdot\btau + \alpha_i\boldsymbol\sigma\bnu\cdot\btau=0\; &{\rm on}\;\Gamma_i,\;i\in III,\\
{\bf u}\cdot{\bf n}=0,\; \partial_\nu u_{\tau}=0\; &{\rm on}\;\Gamma_i,\;i\in IV;
\end{array}
\label{ProblemBC}
\end{equation}
with $u_\tau$ denoting the tangental component of velocity $\bf u$.
\vskip 0.1in
\noindent
{\bf Initial conditions:}
\begin{equation}
{\bf u}(0,.)={\bf u}_0,\;\bdeta(0,.)=\bdeta_0,\; \partial_t\bdeta(0,.)={\bf v}_0.
\label{ProblemIC}
\end{equation}
}}
\vskip 0.1in
The initial data must satisfy the following {\bf compatibility conditions}:
\begin{itemize}
\item  The initial fluid velocity must satisfy:
\begin{equation}\label{FluidCC}
\begin{array}{ll}
{\bf u}_0\in L^2(\Omega_0)^2,\; \nabla\cdot{\bf u}_0=0,\; & {\rm in}\; \Omega^0,\\ 
{\bf u}_0\cdot\nu=0,\; &{\rm on}\;\Gamma_i,\;   i\in II\cup III\cup IV,\\
{\bf u}_0\cdot\bnu_0={\bf v}_0\cdot\bnu_0, \quad & {\rm on}\; \Gamma^0,
\end{array}
\end{equation}
where $\Omega^0=\Omega^{\eta}(0)$, $\Gamma^0=\Gamma^{\eta}(0)$, $\bnu_0=\bnu^{\eta}(0,.)$. 
\item The initial domain must be such that  there exists a diffeomorphism $\bphi^0\in C^1(\overline{\Omega})$
such that 
\begin{equation}\label{StructureCC}
\begin{array}{c}
\bphi^0(\Omega)=\Omega^0,\;\det\nabla\bphi^0>0,
(\bphi-{\bf I})_{|\Gamma}=\bdeta_0,
\end{array}
\end{equation}
and the initial displacement $\bdeta_0$ is such that 
\begin{equation}\label{eta0}
\|\bdeta_0\|_{H^{11/6}} \le c, \ {\rm where}\ c\ {\rm is \ small}.
\end{equation}
\end{itemize}
We aim at proving the existence of a weak solution to this nonlinear moving boundary problem.

Before we continue, we note that condition \eqref{eta0} on the smallness of the $H^{11/6}$ norm of $\bdeta_0$ is somewhat artificial,
and is stated for technical 
purposes. 
This condition simplifies the analysis presented in this paper
which uses Grisvards's regularity results for elliptic problems on polygonal domains $\Omega$,
which includes our reference domain. 
With some additional technicalities, we could have obtained the same existence result by 
considering the reference domain to be the initial configuration of the fluid domain, 
which may not be a polygon.
In that case we would not need condition \eqref{eta0}, but the existence proof would become more technical. 

\section{Weak formulation}
\subsection{A Formal Energy Inequality}
To motivate the solution spaces for the weak solution 
of problem \eqref{NSP}-\eqref{ProblemIC} we present here 
a preliminary version of the formal energy estimate,
which shows that  for any smooth solution of problem \eqref{NSP}-\eqref{ProblemIC}
the total energy of the problem is  bounded by the data of the problem.
The formal energy estimate is derived in a standard way, 
by multiplying equations~\eqref{NS} and~\eqref{Koiter} by a solution $\bf u$ and $\bdeta$, respectively, and integrating by parts.
 The coupling conditions \eqref{ElasticP} and \eqref{SlipP} are used at the boundary $\Gamma^\eta(t)$, 
 and boundary conditions \eqref{ProblemBC} are used on the fixed portion of the boundary $\Sigma=\cup \Gamma_i$. 
 We will use $u_\tau$ and $\eta_\tau$ to denote the tangential components of 
 the trace of fluid velocity and of displacement at the moving boundary, respectively. 
We obtain that any smooth solution $({\bf u},\bdeta)$ of problem~\eqref{NSP}-\eqref{ProblemIC} satisfies the following energy estimate:
\begin{equation}\label{EI_preliminary}
\begin{array}{c}
\displaystyle{\frac{1}{2}\frac{d}{dt}\big (\rho_F\|{\bf u}\|^2_{L^2(\Omega^\eta(t))}+\rho_S h\|\partial_t\bdeta\|^2_{L^2(\Gamma)}+c\|\bdeta\|^2_{H^2(\Gamma)}\big )+\mu\|{\bf D}({\bf u})\|^2_{L^2(\Omega^{\eta}(t))}}\\ \\
\displaystyle{+\frac{1}{\alpha}\|u_{\tau}-\partial_t\eta_{\tau}\|^2_{L^2(\Gamma^\eta(t))}+
\sum_{i\in III} \frac{1}{\alpha_i} \|u_{\tau}\|^2_{L^2(\Gamma_i)}
\leq {\bf C},}
\end{array}
\end{equation}
where ${\bf C}$ depends on the initial and boundary data,
and constant $c$ in front of the $H^2$-norm of $\bdeta$ is associated with the coercivity of the structure operator ${\cal{L}}_e$,
see equation \eqref{coercivity}.

Before we can define weak solutions to problem \eqref{NSP}-\eqref{ProblemIC} we notice that
one of the main difficulties associated with studying problem \eqref{NSP}-\eqref{ProblemIC} 
is the moving fluid domain, which is not known {\sl a priori}.
To deal with this difficulty a couple of approaches have been proposed in the literature.  
One approach is to reformulate the problem in Lagrangian coordinates (see e.g. \cite{CSS1,Kuk}). 
Unfortunately, since problem~\eqref{NSP}-\eqref{ProblemIC} is given on a fixed, control volume, with the ``inlet'' and ``outlet'' boundary data, 
Lagrangian coordinates cannot be used.  
In this manuscript we adopt the second classical approach and use the so-called Arbitrary Lagrangian Eulerian (ALE) mapping (see e.g. \cite{MarSun,donea1983arbitrary,Quarteroni2000}) 
to transform problem~\eqref{NSP}-\eqref{ProblemIC} to the fixed reference domain $\Omega$. 
This will introduce additional nonlinearities in the problem, which will depend on the ALE mapping.
In the next section we construct the appropriate ALE mapping and study its regularity properties, 
which we will need in the proof of the main theorem. 

\subsection{Construction and regularity of the ALE mappping}\label{ALEConstruction}
{\bf Definition of the ALE mapping.}
Motivated by the energy inequality~\eqref{EI_preliminary} we assume that displacement $\bdeta$ satisfies
\begin{equation}\label{eta_regularity}
\bdeta\in L^{\infty}(0,T;H_0^2(\Gamma)^2\cap W^{1,\infty}(0,T;L^2(\Gamma))^2\hookrightarrow C^{0,1-\beta}(0,T;H^{2\beta}_0(\Gamma))^2,
\end{equation}
for $0 < \beta < 1$. 
The inclusion above is a direct consequence of the standard Hilbert interpolation inequalities (see e.g. \cite{LionsMagenes}). 
We denote the corresponding deformation of the elastic boundary by $\bphi^\eta$, i.e.
$$
\bphi^\eta(t,z)={\bf id}+\bdeta(t,z),\quad (t,z)\in [0,T]\times\Gamma.
$$
We consider a family of ALE mappings ${\bf A}_{\eta}$ parameterized by $\eta$,
$$
{\bf A}_\eta (t) : \Omega \to \Omega^\eta(t),
$$
defined
for each $\bdeta$ as a harmonic extension of deformation $\bphi^{\eta}$, 
i.e. ${\bf A}_{\eta}(t)$ is defined as the solution of the following boundary value problem defined on the reference domain $\Omega$:
\begin{equation}\label{ALEDef}
\begin{array}{rcl}
\Delta {\bf A}_{\eta}(t,\cdot)&=&0\;{\rm in}\;{\Omega},\;
\\
{\bf A}_{\eta}(t)_{|\Gamma}&=&\bphi^{\eta}(t,.),\; \\
{\bf A}_{\eta}(t)_{|\Sigma} &=& {\bf id}.
\end{array}
\end{equation}

{\bf Regularity of the ALE mapping.} Since domain $\Omega$ is a polyhedral domain with maximal angle $\pi$, we can apply 
Theorem 5.1.3.1 from Grisvard \cite{Grisvard2}, p. 261, to obtain the following 
regularity of the ALE mapping:
$$
\|{\bf A}_{\eta}(t)\|_{W^{2,3}(\Omega)}\leq C\|\bdeta(t,.)\|_{W^{5/3,3}(\Gamma)}.
$$
We can further estimate the right hand-side by the Sobolev Embedding Theorem 
\begin{equation}\label{eta_reg}
\|\bdeta(t,.)\|_{W^{5/3,3}(\Gamma)}\leq C\|\bdeta\|_{H^s(\Gamma)}, s\geq 11/6,
\end{equation}
and so
\begin{equation}\label{ALEreg1}
\|{\bf A}_{\eta}(t)\|_{W^{2,3}(\Omega)}\leq C\|\bdeta(t,.)\|_{W^{5/3,3}(\Gamma)}\leq \tilde{C} \|\bdeta(t,.)\|_{H^s(\Gamma)},\; s\geq 11/6.
\end{equation}
By using the Sobolev Embedding Theorem to estimate the $W^{2,3}$-norm of ${\bf A}_\eta(t)$ we obtain that
 ${\bf A}_{\eta}$ has a H\"older continuous derivative, namely
\begin{equation}\label{ALEreg2}
\|{\bf A}_{\eta}(t)\|_{C^{1,1/3}(\Omega)}\leq \tilde{C} \|\bdeta(t,.)\|_{H^s(\Gamma)},\; s\geq 11/6.
\end{equation}
Finally, we notice that time $t$ is only a parameter in the linear problem~\eqref{ALEDef},
thus the regularity properties of  ${\bf A}_{\eta}$ with respect to time are the same as 
the regularity properties of $\bdeta$. Now, since $\bdeta \in C^{0,1/12}(0,T;H_0^{11/6}(0,L))^2$,
as shown in \eqref{eta_regularity} for $\beta = 11/6$, and from the inequality \eqref{ALEreg2} with $s = 11/6$,
we have
\begin{eqnarray}
  &1.& {\bf A}_{\eta}\in C^{0,1/12}(0,T;C^{1,1/3}(\Omega))^2,\  {\rm and}
  \label{ALEreg4}
  \\
  &2.&  \displaystyle{\|{\bf A}_{\eta}\|_{C^{0,1/12}(0,T;C^{1,1/3}(\Omega))}\leq C \|\bdeta\|_{C^{0,1/12}(0,T;H^{11/6}(\Gamma))}.}
\label{ALEreg3}
\end{eqnarray}
This regularity result is not optimal, but it is sufficient for the remainder of the proof.

{\bf The ALE velocity.} The ALE velocity is defined by 
\begin{equation}\label{ALEVelocity}
{\bf w}^{\eta}=\frac{d}{dt}{\bf A}_{\eta}.
\end{equation}
From the regularity of $\bdeta$ in \eqref{eta_regularity}
we see that ${\bf w}^{\eta} \in L^2(0,T;H^{1/2}(\Omega))^2$ with
\begin{equation}\label{DomVelReg}
\|{\bf w}^{\eta}\|_{L^{\infty}(0,T;H^{1/2}(\Omega))}\leq C \|\partial_t\bdeta\|_{L^{\infty}(0,T;L^2(\Gamma))}.
\end{equation}
Furthermore, the following estimate holds:
\begin{equation}\label{DomVelRegBis}
\|{\bf w}^{\eta}\|_{H^1(0,T;H^{s+1/2}(\Omega))}\leq C \|\partial_t\bdeta\|_{L^{2}(0,T;H^s(\Gamma))},\; s<1.
\end{equation}
We shall see later in Proposition~\ref{AdditionalReg} that the right hand-side of this inequality is, indeed, bounded.
More precisely, we will show that $\bdeta\in H^1(0,T;H^s(\Gamma))$, $s<1$.

{\bf The Jacobian of the ALE mapping.} Let us now consider the Jacobian of the ALE mapping 
\begin{equation}
\label{Jacobian}
J^{\eta}(t,.)=\det \nabla {\bf A}_{\eta}(t).
\end{equation}
From the regularity property  \eqref{ALEreg4} of ${\bf A}_{\eta}$ we have
$$
J^{\eta}\in C^{0,1/12}(0,T;C^{0,1/3}(\Omega)).
$$
Now, from the compatibility condition~\eqref{StructureCC} we have that the Jacobian at $t = 0$ satisfies
$J(0,{\bf x})\geq C>0$, ${\bf x}\in\Omega$. 
By the continuity of the Jacobian as a function of time,
this implies the existence of a time interval $(0,T')$ such that $J^{\eta}$ is strictly positive on $(0,T')$, i.e. we have:
\begin{equation}\label{PositivityJ}
J^{\eta}\geq C>0,\quad{\rm on}\; (0,T')\times\Omega.
\end{equation}

{\bf Injectivity of $A_\eta$.} In order to be able to use the above-constructed ALE mapping to transform the problem onto the fixed reference domain, it remains to show that $A_{\eta}$ is an injection. A sufficient condition for the injectivity of $\bdeta$ is given by the following proposition.

\begin{proposition}\label{ALEinjectivity}
Let $\bdeta\in L^{\infty}(0,T;H^2(\Gamma))\cap W^{1,\infty}(0,L;L^2(\Gamma))$ be such that $\bdeta(0,.)=\bdeta_0$, 
and $\bdeta_0$ satisfies conditions~\eqref{StructureCC} and \eqref{eta0}. Then there exists a $T''>0$ 
such that for every $t\in [0,T'']$ the ALE mapping ${\bf A}_{\eta}(t)$ is an injection.
\end{proposition}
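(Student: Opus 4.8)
The plan is to exploit the regularity of $\mathbf{A}_\eta$ established in \eqref{ALEreg4}--\eqref{ALEreg3}, together with the smallness assumption \eqref{eta0}, to show that $\mathbf{A}_\eta(t)$ stays close in $C^1$ to a fixed injective map, and then invoke a quantitative inverse-function / perturbation argument. First I would fix the reference ALE map $\mathbf{A}_0 := \mathbf{A}_{\eta_0}$ at $t=0$, which by \eqref{StructureCC} is the harmonic extension of $\boldsymbol\phi^0_{|\Gamma} = \mathbf{id} + \bdeta_0$ and satisfies $\det\nabla\mathbf{A}_0 > 0$ on $\overline\Omega$; one can further arrange (or assume, via \eqref{StructureCC}) that $\mathbf{A}_0$ is a $C^1$-diffeomorphism of $\overline\Omega$. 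Because $\bdeta_0$ is small in $H^{11/6}$ by \eqref{eta0}, estimate \eqref{ALEreg2} with $s = 11/6$ shows $\|\mathbf{A}_0 - \mathbf{id}\|_{C^{1,1/3}(\Omega)} \le \tilde C\|\bdeta_0\|_{H^{11/6}}$ is small, so in fact $\mathbf{A}_0$ is a small $C^1$-perturbation of the identity and hence globally injective on $\overline\Omega$ by the standard argument: for $\mathbf{x}\neq\mathbf{y}$, $|\mathbf{A}_0(\mathbf{x}) - \mathbf{A}_0(\mathbf{y}) - (\mathbf{x}-\mathbf{y})| \le \|\nabla\mathbf{A}_0 - \mathbf{I}\|_{\infty}|\mathbf{x}-\mathbf{y}| < |\mathbf{x}-\mathbf{y}|$, giving $\mathbf{A}_0(\mathbf{x})\neq\mathbf{A}_0(\mathbf{y})$ as long as the Lipschitz constant of $\mathbf{A}_0 - \mathbf{id}$ is $< 1$.

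Next I would use the temporal Hölder continuity in \eqref{ALEreg4}--\eqref{ALEreg3}: since $\mathbf{A}_\eta \in C^{0,1/12}(0,T;C^{1,1/3}(\Omega))$, we have
\begin{equation*}
\|\mathbf{A}_\eta(t) - \mathbf{A}_0\|_{C^{1}(\Omega)} \le \|\mathbf{A}_\eta(t) - \mathbf{A}_\eta(0)\|_{C^1(\Omega)} \le C\,\|\bdeta\|_{C^{0,1/12}(0,T;H^{11/6}(\Gamma))}\, t^{1/12},
\end{equation*}
using that $\mathbf{A}_\eta(0) = \mathbf{A}_0$ (the initial displacement is $\bdeta_0$). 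Combining the two displays, for $t$ small we keep $\|\mathbf{A}_\eta(t) - \mathbf{id}\|_{C^1(\Omega)}$ below the injectivity threshold, or alternatively keep $\|\nabla\mathbf{A}_\eta(t) - \nabla\mathbf{A}_0\|_\infty$ small enough that the perturbation-of-$\mathbf{A}_0$ version of the above inequality still forces injectivity. Choosing $T''>0$ so that $C\|\bdeta\|_{C^{0,1/12}(0,T;H^{11/6}(\Gamma))}(T'')^{1/12}$ plus the $t=0$ defect is strictly less than $1$ (in the appropriate Lipschitz-constant sense) yields injectivity of $\mathbf{A}_\eta(t)$ for all $t\in[0,T'']$. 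One should note the bound on $\|\bdeta\|_{C^{0,1/12}(0,T;H^{11/6})}$ itself is controlled by the $L^\infty(0,T;H^2)\cap W^{1,\infty}(0,T;L^2)$ norms via the interpolation \eqref{eta_regularity}, so the constant $C$ here is effectively governed by the uniform energy bounds.

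The main obstacle I anticipate is not the perturbation argument per se but making sure the "smallness" genuinely propagates: the threshold for injectivity depends on the Lipschitz constant of $\mathbf{A}_\eta(t) - \mathbf{id}$ being $< 1$, and while \eqref{eta0} handles the $t=0$ defect, the time-increment term carries the constant from \eqref{eta_regularity}, which depends on $\|\bdeta\|_{L^\infty(0,T;H^2)}$ — a quantity that is only controlled a posteriori through the energy estimate, not assumed small. This is precisely why the statement produces a time $T''$ depending on $\bdeta$ rather than a universal one, and why the result is local in time; in the iterative construction this $T''$ must be shown to be bounded below uniformly along the approximating sequence, which is where one would have to be careful, but for the proposition as stated it suffices to extract $T''$ from the continuity in time alone. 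A secondary technical point is justifying that $\mathbf{A}_0$ can be taken to be a $C^1$-diffeomorphism in the first place — this follows from \eqref{StructureCC} if $\boldsymbol\phi^0$ is chosen as the harmonic extension and $\bdeta_0$ is small, again via the same perturbation-of-identity argument, so there is no circularity.
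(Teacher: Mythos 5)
Your proposal is correct and follows essentially the same route as the paper: write $\mathbf{A}_\eta(t)=\mathbf{id}+\mathbf{B}_\eta(t)$, bound $\|\nabla\mathbf{B}_\eta(t)\|_\infty$ using \eqref{ALEreg2}--\eqref{ALEreg3}, and combine the smallness of $\bdeta_0$ from \eqref{eta0} with the $C^{0,1/12}$-in-time H\"older modulus to keep that bound below the injectivity threshold for $t\in[0,T'']$. The only cosmetic difference is that where you prove injectivity by the direct Lipschitz-contraction argument (Lipschitz constant of $\mathbf{A}_\eta(t)-\mathbf{id}$ below $1$, which is legitimate here since $\Omega$ is a polygon with angles $\le\pi$ and hence convex), the paper instead invokes Ciarlet's Theorem~5.5-1, which supplies a domain-dependent threshold $c(\Omega)$ and so would also cover non-convex reference domains.
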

\begin{proof}
First we notice that because of the linearity of problem~\eqref{ALEDef} 
and the definition of deformation $\bphi^{\eta}$, we can write the ALE mapping in the following form:
$$
{\bf A}_{\eta}(t)={\bf id}+{\bf B}_{\eta}(t),
$$
where ${\bf B}_{\eta}$ is the solution of the following boundary value problem:
$$
\begin{array}{rcl}
\Delta {\bf B}_{\eta}(t,)&=& 0\;{\rm in}\;{\Omega},\;
\\
{\bf B}_{\eta}(t)_{|\Gamma} &=& \bdeta^{\eta}(t,.),\; \\
{\bf B}_{\eta}(t)_{|\partial\Omega\setminus\Gamma}&=&0.
\end{array}
$$
Now, we see that the regularity of ${\bf B}_{\eta}$ follows in the same way as the regularity of ${\bf A}_{\eta}$ with analogous estimates in terms of $\bdeta$ in the same norms. Therefore, ${\bf B}_{\eta}$ satisfies \eqref{ALEreg3}, 
which implies, among other things, that 
$$
\sup_{\bar\Omega} | \nabla {\bf B}_{\eta} | \le c(\Omega).
$$
We now use Theorem 5.5-1 from \cite{CiarletBook1} (pp. 222), which we state here for completeness:
\begin{theorem}{\bf (Sufficient conditions for preservation of injectivity and orientation \cite{CiarletBook1})}\label{Ciarlet}\
\indent
(A) Let ${\bphi} = {\bf id} + \bpsi: \Omega \subset \R^n \to \R^n$ be a mapping differentiable at a point ${\bf x} \in \Omega$.
Then:
$$
| \nabla \bpsi | < 1 \rightarrow {\rm det} \nabla \bphi > 0.
$$
\indent
(B) Let $\Omega$ be a domain in $\R^n$. There exists a constant $c(\Omega) > 0$ such that any mapping 
${\bphi} = {\bf id} + \bpsi \in C^1(\bar\Omega,R^n)$ satisfying 
\begin{equation}\label{c_Omega}
\sup_{\bar\Omega} | \nabla \bpsi | \le c(\Omega)
\end{equation}
is injective.
\end{theorem}
Statement (B) of the theorem says that every domain has as associated constant $c(\Omega)$ such that whenever 
\eqref{c_Omega} holds, the mapping ${\bphi}$ is injective.

We use this theorem, together with the assumptions \eqref{StructureCC} and \eqref{eta0}, to see that there exists a $T'' > 0$ such that the ALE mapping
${\bf A}_\eta(t)$ is an injection for every $t \in [0,T'']$.

\end{proof}

We now take the minimum between $T'$ and $T''$, and call this new time $T$ again, i.e., 
\begin{equation}\label{T}
T=\min\{T',T''\},
\end{equation}
where $T'$ is determined from the positivity of the Jacobian $J^\eta$, see \eqref{PositivityJ}, and $T''$ is determined from 
the injectivity of ${\bf A}_\eta$, see Proposition~\ref{ALEinjectivity}.
This new time determines the existence time interval for the weak solution. 
Our existence result will be local in time in the sense that the maximum $T$ for which we can show that a solution exists 
is determined by the time at which the fluid domain degenerates in the sense that either  the Jacobian of the ALE mapping
becomes zero, or the ALE mapping ceases to be injective. Examples showing two types of domain degeneration are shown in 
Figure~\ref{degeneration}. The degeneration of the fluid domain
due to the loss of injectivity of ${\bf A}_\eta$ shown in Figure~\ref{degeneration} left can occur because the longitudinal 
displacement is non-zero. The degeneration of the fluid domain shown in Figure~\ref{degeneration} right, associated with 
the loss of injectivity of ${\bf A}_\eta$ and loss of strict positivity of the Jacobian $J^\eta$, can occur even 
if one assumes that the longitudinal displacement of the structure is zero.
\begin{figure}[ht]
\centering{
\includegraphics[scale=0.35]{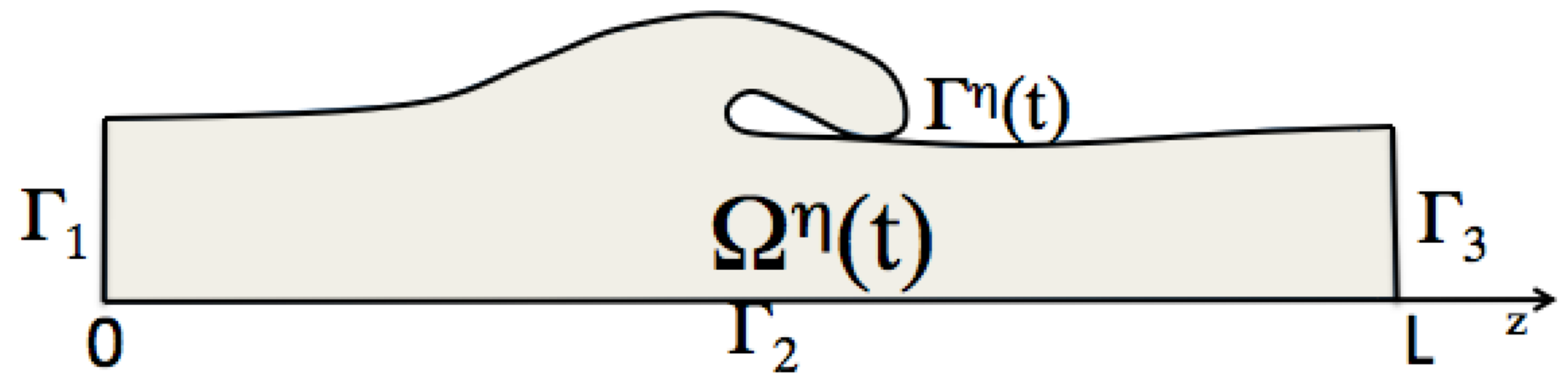}
\includegraphics[scale=0.35]{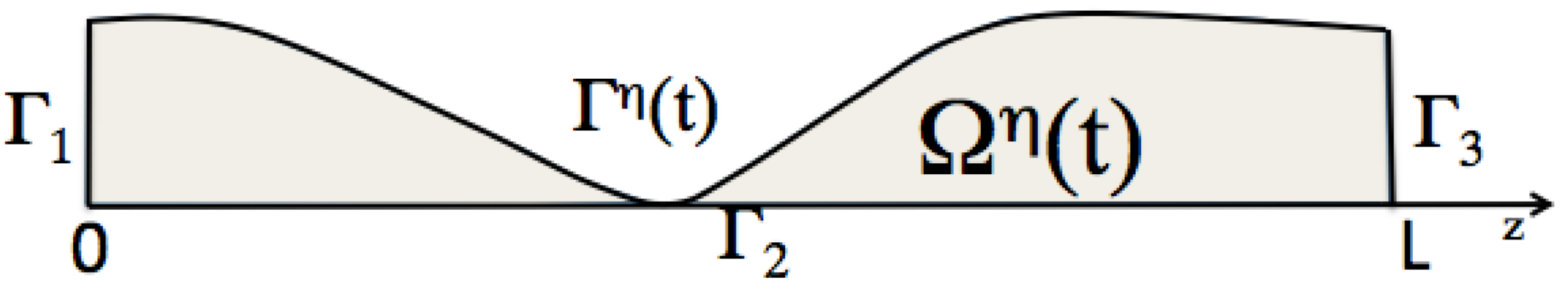}
}
\caption{Two ways a fluid domain can degenerate. Left: loss of injectivity of the ALE mapping ${\bf A}_\eta$. Right: loss of injectivity of
the ALE mapping ${\bf A}_\eta$  and loss strict positivity of the Jacobian $J^\eta$.}
\label{degeneration}
\end{figure}

\subsection{The weak ALE formulation}\label{Sec:Defweak}
As mentioned earlier, we will prove the existence of a weak solution to problem \eqref{NSP}-\eqref{ProblemIC} by mapping
the problem defined on the moving domain $\Omega^\eta(t)$ onto a fixed, reference domain $\Omega$, and study the transformed problem
on $\Omega$. 
For this purpose we map the functions defined on the moving domain $\Omega^\eta(t)$ onto the reference 
domain using the ALE mapping introduced above. We will use super-script $\eta$ to denote that
those functions now depend (implicitly) on $\bdeta$. More precisely,
let ${\bf f}$ be a (scalar or vector) function defined on $\Omega^\eta(t)$. Then  ${\bf f}^{\eta}$, defined on $\Omega$,
is given by:
$$
{\bf f}^{\eta}(t,z,r)={\bf f}\big (t,{\bf A}_{\eta}(t)(z,r)\big )={\bf f}(t,x,y),
$$
where $(x,y)={\bf A}_{\eta}(t)(z,r)\in\Omega^{\eta}(t)$ denotes the coordinates in $\Omega^\eta(t)$.
Furthermore, we define the transformed gradient and divergence operators by
\begin{equation}\label{nablaeta}
\nabla^{\eta}{\bf f}^{\eta}(t):=(\nabla{\bf f}(t))^{\eta}=\nabla{\bf f}^{\eta}(\nabla A_{\eta}(t))^{-1},\;\nabla^{\eta}\cdot{\bf f}^{\eta}={\rm tr}\big (\nabla^{\eta}{\bf f}^{\eta}\big ).
\end{equation}
It will be useful in the remainder of the paper to obtain a relationship between the gradient $\nabla^{\eta}$ and symmetrized gradient
 ${\bf D}^{\eta(t,.)}$ of the fluid velocity ${\bf u}$
defined on $\Omega$. This is typically given via Korn's inequality. 
However, the problem is that the fluid velocity ${\bf u}^\eta(t,z,r)$ for $t \in (0,T)$, which is defined on the fixed domain $\Omega$,
is coming from a family of velocities  ${\bf u}$ defined on domains $\Omega^\eta(t)$ for $t \in (0,T)$, via 
a family of  ALE mappings, all depending on $\bdeta$. We now show that if $\bdeta$ is ``nice enough'', there exists a uniform 
Korn's constant, independent of the family of domains, such that a version of Korn's inequality holds. 
More precisely, the following Lemma holds true. 
\begin{lemma}{\bf (The ``transformed'' Korn's inequality)}\label{Korn}
Let 
\begin{enumerate}
\item $\bdeta\in L^{\infty}(0,T;H^2(\Gamma))\cap W^{1,\infty}(0,L;L^2(\Gamma))$, and
\item $\bdeta(0,.)=\bdeta_0$ where $\bdeta_0$ satisfies conditions \eqref{StructureCC} and \eqref{eta0}.
\end{enumerate}
Then there exists a time $T'>0$ and constants $C, c>0$ depending only on 
$\|\bdeta\|_{L^{\infty}(H^2)\cap W^{1,\infty}(L^2)}$,
 such that for every ${\bf u}^\eta\in H^1(\Omega)^2$ satisfying boundary condition~\eqref{ProblemBC} on $\Sigma$, the following transformed version of Korn's inequality holds:
$$
c\|{\bf D}^{\eta (t,.)}({\bf u})\|_{L^2(\Omega)}\leq\|\nabla^{\eta(t,.)}({\bf u})\|_{L^2(\Omega)}\leq C\|{\bf D}^{\eta(t,.)}({\bf u})\|_{L^2(\Omega)},\;t\in [0,T'].
$$ 
The time $T' > 0$ is determined by the injectivity of the ALE mapping ${\bf A}_\eta$.
\end{lemma}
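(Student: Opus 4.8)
The left-hand inequality is elementary and holds with $c=1$: for any matrix the Frobenius norm of its symmetric part is at most that of the matrix, so pointwise $|{\bf D}^{\eta(t,.)}({\bf u})|\le|\nabla^{\eta(t,.)}({\bf u})|$. The real content is the right-hand inequality, with a constant $C$ that is \emph{uniform} in $t\in[0,T']$ and over all $\bdeta$ in a fixed ball of $L^{\infty}(0,T;H^2(\Gamma))\cap W^{1,\infty}(0,T;L^2(\Gamma))$. My plan is to regard $\nabla^{\eta(t,.)}({\bf u})=\nabla{\bf u}\,(\nabla{\bf A}_\eta(t))^{-1}$ as a small $L^{\infty}$-perturbation of the ordinary gradient $\nabla{\bf u}$ on the fixed domain $\Omega$, and then to reduce to the classical Korn inequality on $\Omega$ for fields satisfying the boundary conditions \eqref{ProblemBC} on $\Sigma$.

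The key step is to show that $\sup_{\overline\Omega}|\nabla{\bf A}_\eta(t)-{\bf I}|$ can be made as small as we wish on some interval $[0,T']$, with $T'$ depending only on $\|\bdeta\|_{L^{\infty}(H^2)\cap W^{1,\infty}(L^2)}$. Writing ${\bf A}_\eta(t)={\bf id}+{\bf B}_\eta(t)$ as in the proof of Proposition~\ref{ALEinjectivity}, the Grisvard-type estimate \eqref{ALEreg2}, applied to the harmonic extension ${\bf B}_\eta$ of $\bdeta$, reads $\|{\bf B}_\eta(t)\|_{C^{1,1/3}(\Omega)}\le C\|\bdeta(t,.)\|_{H^{11/6}(\Gamma)}$, so it suffices to make $\|\bdeta(t,.)\|_{H^{11/6}(\Gamma)}$ small. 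I would split $\bdeta(t)=\bdeta_0+(\bdeta(t)-\bdeta_0)$: the first term is small in $H^{11/6}(\Gamma)$ by the compatibility condition \eqref{eta0}; for the second, the Hilbert interpolation embedding \eqref{eta_regularity} with $2\beta=11/6$ yields $\bdeta\in C^{0,1/12}(0,T;H^{11/6}_0(\Gamma))$ with norm controlled by $\|\bdeta\|_{L^{\infty}(H^2)\cap W^{1,\infty}(L^2)}$, hence $\|\bdeta(t)-\bdeta_0\|_{H^{11/6}(\Gamma)}\le C\,t^{1/12}$. So for any prescribed $\varepsilon\in(0,1)$ there is $T'=T'(\varepsilon,\|\bdeta\|_{L^{\infty}(H^2)\cap W^{1,\infty}(L^2)})>0$ with $\sup_{\overline\Omega}|\nabla{\bf B}_\eta(t)|\le\varepsilon$ on $[0,T']$; after possibly shrinking $T'$ we also take it no larger than the injectivity time $T''$ of Proposition~\ref{ALEinjectivity}.

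With $\varepsilon<1$ the matrix $\nabla{\bf A}_\eta(t)={\bf I}+\nabla{\bf B}_\eta(t)$ is pointwise invertible, and a Neumann series gives $(\nabla{\bf A}_\eta(t))^{-1}={\bf I}+{\bf E}(t)$ with $\|{\bf E}(t)\|_{L^{\infty}(\Omega)}\le\varepsilon/(1-\varepsilon)$. Because ${\bf A}_\eta(t)_{|\Sigma}={\bf id}$, the conditions \eqref{ProblemBC} imposed on ${\bf u}$ on $\Sigma$ are genuine conditions on $\partial\Omega$, so the classical Korn inequality on $\Omega$ applies: $\|\nabla{\bf u}\|_{L^2(\Omega)}\le C_K\|{\bf D}({\bf u})\|_{L^2(\Omega)}$, with $C_K$ depending only on $\Omega$ and the prescribed conditions --- here one uses that the space of $H^1(\Omega)^2$ fields satisfying \eqref{ProblemBC} on $\Sigma$ contains no nonzero infinitesimal rigid displacement, which follows from the polygonal geometry together with those conditions. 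From $\nabla^{\eta(t,.)}({\bf u})=\nabla{\bf u}+\nabla{\bf u}\,{\bf E}$ and the linearity of the symmetrization I get $\|\nabla^{\eta}({\bf u})-\nabla{\bf u}\|_{L^2}\le\tfrac{\varepsilon}{1-\varepsilon}\|\nabla{\bf u}\|_{L^2}$ and $\|{\bf D}^{\eta}({\bf u})-{\bf D}({\bf u})\|_{L^2}\le\tfrac{\varepsilon}{1-\varepsilon}\|\nabla{\bf u}\|_{L^2}$. Combining these, $\|\nabla{\bf u}\|_{L^2}\le C_K\|{\bf D}({\bf u})\|_{L^2}\le C_K\big(\|{\bf D}^{\eta}({\bf u})\|_{L^2}+\tfrac{\varepsilon}{1-\varepsilon}\|\nabla{\bf u}\|_{L^2}\big)$; fixing $\varepsilon$ small enough that $C_K\varepsilon/(1-\varepsilon)\le\tfrac12$ lets me absorb the last term, so $\|\nabla{\bf u}\|_{L^2}\le2C_K\|{\bf D}^{\eta}({\bf u})\|_{L^2}$, and then $\|\nabla^{\eta}({\bf u})\|_{L^2}\le(1+\tfrac{\varepsilon}{1-\varepsilon})\|\nabla{\bf u}\|_{L^2}\le C\|{\bf D}^{\eta}({\bf u})\|_{L^2}$ with $C$ independent of $t\in[0,T']$ and of $\bdeta$. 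This choice of $\varepsilon$ is made once and for all, which in turn pins down $T'$ as a function of $\|\bdeta\|_{L^{\infty}(H^2)\cap W^{1,\infty}(L^2)}$ alone.

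The main obstacle is the second step: securing the \emph{uniform} smallness of $\nabla{\bf A}_\eta-{\bf I}$ on a time interval whose length depends only on the stated norm of $\bdeta$. This is exactly where the ingredients prepared earlier must be combined --- Grisvard's elliptic regularity for the harmonic extension on the polygon $\Omega$ behind \eqref{ALEreg1}--\eqref{ALEreg2}, the interpolation embedding \eqref{eta_regularity} that converts energy-type bounds into H\"older-in-time control, and the smallness hypothesis \eqref{eta0} on $\bdeta_0$. Once $\nabla{\bf A}_\eta-{\bf I}$ is small in $L^{\infty}$, the remainder is the routine perturbative Korn argument sketched above.
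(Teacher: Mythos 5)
Your proof is correct, but it takes a genuinely different route from the paper's. The paper handles the Korn constant by transporting ${\bf u}$ back to the moving physical domain $\Omega^\eta(t)$ for each fixed $t$, applying the classical Korn inequality there, and then invoking a uniformity result of Vel\v{c}i\'{c} (Lemma~1 and Remark~6 of that reference): because the family $\{{\bf A}_\eta(t): t\in(0,T')\}$ is compact in $W^{1,\infty}(\Omega)^2$ (thanks to the H\"older-in-time bound \eqref{ALEreg3}), the per-domain Korn constants admit a uniform bound. You instead stay on the fixed reference domain $\Omega$ and treat $\nabla^{\eta}$ as a small $L^\infty$-perturbation of $\nabla$, using a Neumann series for $(\nabla{\bf A}_\eta)^{-1}$ and an absorption argument to reduce everything to one application of classical Korn on $\Omega$. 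Both approaches depend on the same ingredients (Grisvard regularity behind \eqref{ALEreg2}, the interpolation embedding \eqref{eta_regularity}, the smallness condition \eqref{eta0}), and both tacitly take for granted that Korn's second inequality holds on $\Omega$ for fields satisfying \eqref{ProblemBC} on $\Sigma$ --- you assert it on rigid-motion grounds, the paper subsumes it in the Vel\v{c}i\'{c} citation. The trade-off is that your argument is more elementary and self-contained (no compactness-of-families machinery), at the price of possibly having to shrink $T'$ further so that $\sup_{\overline\Omega}|\nabla{\bf B}_\eta|$ is genuinely small enough to absorb the perturbation into Korn's constant --- a stronger requirement than bare injectivity, whereas the paper's compactness argument in principle only needs the injectivity interval and a uniform $W^{1,\infty}$ bound. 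In the context of the rest of the paper (where $T$ is taken to be a minimum of several small times anyway) this makes no practical difference.
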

\proof
From Proposition~\ref{ALEinjectivity} we deduce the existence of a $T'>0$ such that ${\bf A}_{\eta}(t)$ is injective for every $t\in [0,T']$. Now, 
the statement of the Lemma follows from the results in \cite{Velcic} (Lemma 1 and Remark 6) in the same way as in \cite{BorSun}. 
Namely, for each fixed $t \in (0,T')$ we map ${\bf u}$ back to the physical domain $\Omega^\eta(t)$ 
and apply Korn's inequality there in a standard way, using the classical Korn's constant which depends on domain $\Omega^\eta(t)$
defined by $\bdeta$. Due to the regularity of $\bdeta$ given by conditions 1. and 2. in the statement of the Lemma, and 
due to the uniform (in $t$) estimate~\eqref{ALEreg3}, it follows that the set $\{{\bf A}_{\eta}(t):t\in (0,T')\}$ is compact in $W^{1,\infty}(\Omega)^2$, from which the existence of universal Korn constants $c$ and $C$ follows.
\qed

To define the ALE formulation of problem~\eqref{NSP}-\eqref{ProblemIC} we recall the definition of the ALE velocity given in \eqref{ALEVelocity}
and define the ALE derivative as a time derivative evaluated on the fixed reference domain:
\begin{equation}\label{ALEDerivative}
\partial_t{\bf f}_{|\Omega}=\partial_t{\bf f}+({\bf w}^\eta\cdot\nabla){\bf f}.
\end{equation}
Using the ALE mapping we can rewrite the Navier-Stokes equations in the ALE formulation as follows:
\begin{equation}\label{ALENS}
\partial_t{\bf u}_{|\Omega}+({\bf u}-{\bf w}^\eta)\cdot\nabla{\bf u}=\nabla\cdot\bsigma\quad{\rm in}\;\Omega^{\eta}(t).
\end{equation}
Here, the terms $\partial_t{\bf u}_{|\Omega}$ and ${\bf w}^\eta$,
which are originally defined on $\Omega$, 
are  composed with the inverse of the ALE mapping, which maps them back to the moving domain $\Omega^\eta(t)$.

Our goal is to define weak solutions to problem \eqref{NSP}-\eqref{ProblemIC} on the fixed, reference domain $\Omega$. 
The first step is to introduce the necessary function spaces on $\Omega$. 
For this purpose we notice that the incompressibility condition in moving domains $\nabla\cdot{\bf u}=0$ 
transforms into the following condition on $\Omega$: 
$$\nabla^{\eta}\cdot{\bf u}^\eta=0,$$ 
which we will call the transformed divergence-free condition. 

Motivated by the energy inequality \eqref{EI_preliminary} we can now define the function spaces associated with 
weak solutions of problem \eqref{NSP}-\eqref{ProblemIC}.
We define the analogue of the classical function space for the fluid velocity with the transformed divergence-free condition:
$$
{\mathcal V}_F^{\eta}=\{{\bf u}^\eta=(u_z^\eta,u_r^\eta)\in H^1(\Omega)^2:\nabla^{\eta}\cdot{\bf u}^\eta=0,
{\bf u}^\eta\cdot\btau^\eta=0,\; {\rm on}\ \Gamma_i,\;i\in I,\;
$$
$$
{\bf u}^\eta=0\ {\rm on}\ \Gamma_i,\;i\in II,\; {\bf u}^\eta\cdot\bnu^\eta=0,\; {\rm on}\ \Gamma_i,\;i\in III\cup IV \}.
$$
The corresponding space involving time is given by:
\begin{equation}\label{FluidSpace}
{\mathcal W}_F^\eta(0,T)=L^{\infty}(0,T;L^2(\Omega))\cap L^2(0,T;{\cal V}_F^\eta).
\end{equation}
The structure function spaces are classical:
\begin{equation}\label{struc_test}
{\mathcal W}_S(0,T)=W^{1,\infty}(0,T;L^2(0,L))^2\cap L^{\infty}(0,T;H^2_0(\Gamma))^2.
\end{equation}
The solution space for our problem with the {\bf slip boundary condition} must incorporate the continuity of normal velocities:
\begin{equation}\label{W_eta}
{\mathcal W^\eta}(0,T)=\{({\bf u}^\eta,\bdeta)\in {\cal W}_F^\eta(0,T)\times{\cal W}_S(0,T):{\bf u}^\eta_{|\Gamma}\cdot\bnu^{\eta}=\partial_t\bdeta\cdot\bnu^{\eta}\}.
\end{equation}
The corresponding test space is defined by
\begin{equation}\label{Q_eta}
{\mathcal Q^\eta}(0,T)=\{({\bf q}^\eta,\bpsi)\in C^1_c([0,T);{\cal V}_F^\eta\times H_0^2(\Gamma)^2):{\bf q}^\eta_{|\Gamma}\cdot\bnu^{\eta}=\bpsi(t,z)\cdot\bnu^{\eta}\}.
\end{equation}

To obtain the weak formulation on $\Omega$ we first consider our problem defined on  moving domains $\Omega^\eta(t)$.
Take a test function ${\bf q}$ defined on $\Omega^\eta(t)$ for some $\bdeta$, such that the corresponding test function ${\bf q}^\eta$
defined on the fixed domain belongs to the test space ${\mathcal Q^\eta}(0,T)$,
namely,  $({\bf q}^{\eta},\bpsi)\in {\mathcal Q^\eta}(0,T)$. 
Multiply~\eqref{ALENS} by ${\bf q}$, integrate over $\Omega^{\eta}(t)$, and formally integrate by parts. 
We obtain the following. 
For the convective term we have:
\begin{eqnarray*}
\int_{\Omega^{\eta}(t)}(({\bf u}-{\bf w}^\eta)\cdot\nabla){\bf u}\cdot{\bf q}=
\displaystyle{\frac 1 2\int_{\Omega^{\eta}(t)}(({\bf u}-{\bf w}^\eta)\cdot\nabla){\bf u}\cdot{\bf q}-
\frac 1 2\int_{\Omega^{\eta}(t)}(({\bf u}-{\bf w}^\eta)\cdot\nabla){\bf q}\cdot{\bf u}}\\
\displaystyle{+\frac 1 2\int_{\Gamma^{\eta}(t)}({\bf u}-{\bf w}^\eta)\cdot\bnu^{\eta}({\bf u}\cdot{\bf q})d\Gamma^{\eta}+\frac{1}{2}\int_{\Omega^{\eta}(t)}(\nabla\cdot{\bf w}^\eta){\bf u}\cdot{\bf q}-\frac 1 2\sum_{i\in I}\int_{\Gamma_i}|{\bf u}|^2q_\nu}.
\end{eqnarray*}
For  the diffusive part of the Navier-Stokes equations we have:
$$
-\int_{\Omega^{\eta}(t)}(\nabla\cdot\bsigma)\cdot{\bf q}=2\mu\int_{\Omega^{\eta}(t)}{\bf D}({\bf u}):{\bf D}({\bf q})-\int_{\partial\Omega^{\eta}(t)}\bsigma\bnu^{\eta}\cdot{\bf q},
$$
where the second term on the right hand side can be expressed as follows:
\begin{align}
\int_{\partial\Omega^{\eta}(t)}\bsigma\bnu^{\eta}\cdot{\bf q} & = \int_{\Gamma^{\eta}(t)}\Big ((\bsigma\bnu^{\eta}\cdot\bnu^{\eta}){\bf q}\cdot\bnu^{\eta}+(\bsigma\bnu^{\eta}\cdot\btau^{\eta}){\bf q}\cdot\btau^{\eta}\Big )d\Gamma^{\eta}
\nonumber
\\
&+ \sum_{i\in I}\int_{\Gamma_i}pq_\nu
+\sum_{i\in III}\int_{\Gamma_i}(\bsigma\bnu\cdot\btau){\bf q}\cdot\btau
\nonumber
\\
&=\int_{\Gamma^{\eta}(t)}\Big ((\bsigma\bnu^{\eta}\cdot\bnu^{\eta})\bpsi\cdot\bnu^{\eta}+\frac{1}{\alpha}(\partial_t\bdeta-{\bf u})\cdot\btau^{\eta}({\bf q}\cdot\btau^{\eta})\Big )d\Gamma^{\eta}
\nonumber
\\
&+\sum_{i\in I}\int_{\Gamma_i}pq_\nu
-\sum_{i\in III}\int_{\Gamma_i}\frac{1}{\alpha_i}u_\tau q_\tau.
\nonumber
\end{align}
We sum all the integrals, and take into account the coupling conditions \eqref{ElasticP}-\eqref{SlipP} holding along $\Gamma^\eta(t)$,
and boundary conditions \eqref{ProblemBC} holding along $\Sigma$. 
To deal with the non-zero data on $\Gamma_i, i\in I$
we  introduce the ``source term functional'' ${\bf R}$ which will collect the two terms corresponding to dynamic pressure data
defined on $\Gamma_i, i\in I$:
\begin{equation}\label{SourceTermR}
\langle {\bf R}, {\bf q}\rangle := \sum_{i\in I} \int_{\Gamma_i} P_i {\bf q}\cdot {\bnu},\quad 
\forall {\bf q} \in C_c^1([0,T)\times\overline\Omega),
\end{equation}
where $\bnu$ is the unit outward normal to the boundary $\Gamma_i, i \in I$.
Then we integrate the entire expression with respect to time over $(0,T)$ to obtain the following weak ALE
formulation of  problem \eqref{NSP}-\eqref{ProblemIC} defined on $\Omega^\eta(t)$:

\begin{equation}\label{WeakFormulationMovingDomain}
\begin{array}{c}
\displaystyle{\rho_F\int_0^T\int_{\Omega^{\eta}(t)}\left(\partial_t{\bf u}^{\eta}_{|\Omega}\cdot{\bf q}+\frac{1}{2}\big ((({\bf u}-{\bf w}^\eta)\cdot\nabla){\bf u}\cdot{\bf q}-
(({\bf u}-{\bf w}^\eta)\cdot\nabla){\bf q}\cdot{\bf u}\big )+\frac{1}{2}(\nabla\cdot{\bf w}^\eta){\bf u}\cdot{\bf q}\right)}
\\ 
\displaystyle{ + \int_0^T \left\{2\mu\int_{\Omega^{\eta}(t)}{\bf D}({\bf u}):{\bf D}({\bf q})+\sum_{i\in III}\int_{\Gamma_i}\frac{1}{\alpha_i}u_\tau q_\tau
+\frac{1}{\alpha}\int_{\Gamma^{\eta}(t)}(u_{\tau^{\eta}}-\partial_t\eta_{\tau^{\eta}})q_{\tau^{\eta}}d\Gamma^{\eta}(t)\right\}dt} \\
\displaystyle{ +\rho_S h\int_0^T\int_{\Gamma} \partial^2_t\bdeta\bpsi dz dt} 
\displaystyle{+\int_0^T \langle{\mathcal L}_e\bdeta,\bpsi\rangle dt +\frac{1}{\alpha}\int_0^T\int_{\Gamma}(\partial_t\eta_{\tau^{\eta}}-u_{\tau^{\eta}})\phi_{\tau^{\eta}}S^{\eta}dz dt = \int_0^T\langle {\bf R},{\bf q}\rangle},\; 
\end{array}
\end{equation}
for all $({\bf q},\bpsi)$ such that $({\bf q}^{\eta},\bpsi)\in {\mathcal Q}^{\eta}(0,T)$,
where $g_{\tau^{\eta}}={\bf q}\cdot\btau^{\eta}$ and $g_{\nu^{\eta}}={\bf q}\cdot\bnu^{\eta}$ denotes the tangential and normal component of ${\bf q}$, respectively, and the source term functional ${\bf R}$ is defined in \eqref{SourceTermR} to account for the non-zero dynamic pressure boundary 
condition on $\Gamma_i, i\in I$.

We now transform~\eqref{WeakFormulationMovingDomain} to the fixed reference domain via 
the ALE mapping ${\bf A}_{\eta}$. To do that we first compute the integral involving the ALE derivative $\partial_t{\bf u}^\eta_{|\Omega}$:
\begin{align}
\int_0^T\int_{\Omega^{\eta}(t)}\partial_t{\bf u}^{\eta}_{|\Omega}\cdot{\bf q}& =\int_0^T\int_{\Omega}J^{\eta}\partial_t{\bf u}^{\eta}\cdot{\bf q}^{\eta}
\nonumber 
\\
=& -\int_0^T\int_{\Omega} \partial_t  J^{\eta}{\bf u}^{\eta}\cdot{\bf q}^{\eta}-\int_0^T\int_{\Omega}J^{\eta}{\bf u}^{\eta}\cdot\partial_t{\bf q}^{\eta}
-\int_{\Omega}J_0{\bf u}_0\cdot{\bf q}^{\eta}(0,.).
\nonumber
\end{align}
Now, since 
\begin{equation}\label{J_t}
\partial_t J^{\eta}=J^{\eta}\nabla^{\eta}\cdot{\bf w}^{\eta}
\end{equation}
(see e.g. \cite{Gur}, p. 77),
the above expression reads
\begin{align}
\int_0^T\int_{\Omega^{\eta}(t)}\partial_t{\bf u}^{\eta}_{|\Omega}\cdot{\bf q}= 
-\int_0^T \int_{\Omega}  \left\{ J^{\eta} \left( \nabla^{\eta}\cdot{\bf w}^{\eta}\right) \left( {\bf u}^{\eta}\cdot{\bf q}^{\eta}\right) 
-  J^{\eta}{\bf u}^{\eta}\cdot\partial_t{\bf q}^{\eta}\right\}
-\int_{\Omega}J_0{\bf u}_0\cdot{\bf q}^{\eta}(0,.).
\nonumber
\end{align}
Now, we can define the weak solution on the fixed reference domain.

\vskip 0.1in
{\bf NOTATION.} To simplify {notation}, from this point on we will omit the superscript $\eta$ in ${\bf u}^{\eta}$ and ${\bf q}^{\eta}$
since everything will be happening only on the fixed, reference domain, and there will be no place for confusion.

\begin{definition}{\bf (Weak solution) } \label{WeakALE}
We say that $({\bf u},\bdeta)\in{\mathcal W}^{\eta}(0,T)$ is a weak solution to problem \eqref{NSP}-\eqref{ProblemIC} defined on the reference domain $\Omega$, if for every $({\bf q},\bpsi)\in{\mathcal Q}^{\eta}(0,T)$ the following equality holds:
\end{definition}
\begin{equation}\label{WeakALEFormula}
\begin{array}{c}
\displaystyle{\frac{\rho_F}{2}\int_0^T\int_{\Omega}J^{\eta}\Big ((({\bf u}-{\bf w}^{\eta})\cdot\nabla^{\eta}){\bf u}\cdot {\bf q}-(({\bf u}-{\bf w}^{\eta})\cdot\nabla^{\eta}){\bf q}\cdot {\bf u}}
-(\nabla^{\eta}\cdot{\bf w}^{\eta}){\bf q}\cdot {\bf u}\Big)
\\ \\
\displaystyle{
-\rho_F\int_0^T\int_{\Omega}J^{\eta}{\bf u}\cdot\partial_t{\bf q} +\int_0^T2\mu\int_{\Omega} J^\eta {\bf D}^{\eta}({\bf u}):{\bf D}^{\eta}({\bf q})+\sum_{i\in III}\int_0^T \int_{\Gamma_i}\frac{1}{\alpha_i}u_\tau q_\tau}
\\ \\
\displaystyle{+\frac{1}{\alpha}\int_0^T \int_{\Gamma}(u_{\tau^{\eta}}-\partial_t\eta_{\tau^{\eta}})q_{\tau^{\eta}}S^{\eta}dzdt -\rho_S h\int_0^T\int_{\Gamma} \partial_t\bdeta\partial_t\bpsi dz dt+\int_0^T \langle{\mathcal L}_e\bdeta,\bpsi\rangle}
\\ \\
\displaystyle{+\frac{1}{\alpha}\int_0^T \int_{\Gamma}(\partial_t\eta_{\tau^{\eta}} -u_{\tau^{\eta}})\psi_{\tau^{\eta}}S^{\eta}dz dt
=\int_0^T\langle {\bf R},{\bf q}\rangle+\int_{\Omega}J_0{\bf u}_0\cdot{\bf q}(0)+\int_{\Gamma}{\bf v}_0\cdot\bpsi},
\end{array}
\end{equation}
where $J^\eta$ is the determinant of the Jacobian of  ALE mapping, defined in \eqref{Jacobian},
and ${\bf w}^\eta$ is the ALE velocity, defined in \eqref{ALEVelocity}.

We note that Definition~\ref{WeakALE} makes sense even in the case when the ALE mapping ${\bf A}_{\eta}$ is not injective. 
However, in that case the weak formulation~\eqref{WeakALEFormula} is not equivalent to problem \eqref{NSP}-\eqref{ProblemIC} even for smooth solutions.

\section{Main Result}
We are now in a position to state the main result of this work. For this purpose we shall assume that all the moving domains 
$\Omega^\eta(t)$ are contained in a larger domain $\Omega_{\rm max}$. Indeed, 
it will be shown later, see Corollary \ref{ConvEtaC1}, that this will be the case. 
We note that the only reason for this assumption is to 
assume a certain regularity of the source term ${\bf R}$, associated with the non-zero boundary data 
on $\Sigma$. Namely, we will be assuming that ${\bf R}\in L^2(0,T;H^1(\Omega_{\rm max})')$,
where $(H^1)'$ denotes the dual space of $H^1$. 
In the case when only the dynamic pressure boundary data is different from zero, the introduction of the source term ${\bf R}$ 
is not necessary. However, we state the Main Result in  general terms,
and consider $\Omega_{\rm max}$ to be the union of all the moving domains $\Omega^\eta(t)$.

\begin{theorem}{\bf (Main result)} \label{MainResult}
Let all the parameters in the problem be positive (this includes the
fluid and structure densities $\rho_F$ and $\rho_S$, structure thickness $h$, and the slip-condition friction constants $\alpha$ on 
the moving boundary $\Gamma^\eta(t)$,
and $\alpha_i$'s on the rigid boundary $\Gamma_{III}$). 
Moreover, let the source term functional ${\bf R}$, defined in \eqref{SourceTermR}, 
be such that ${\bf R}\in L^2(0,\infty;H^1(\Omega_{\rm max})')$.
 If the initial data 
 ${\bf u}_0\in L^2(\Omega^0)$ and $\bdeta_0\in H^2_0(\Gamma)$ 
 are such that compatibility conditions~\eqref{FluidCC}, \eqref{StructureCC} and \eqref{eta0} are satisfied, then there exists a $T>0$ and a weak solution $({\bf u},\bdeta)$ to problem \eqref{NSP}-\eqref{ProblemIC}
 defined on $(0,T)$,
 such that the following energy estimate is satisfied:
 \begin{equation}\label{EI1}
\begin{array}{c}
\displaystyle{\frac{1}{2}\big (\rho_F\|{\bf u}\|^2_{L^\infty(0,T;L^2(\Omega^\eta(t))}+\rho_S h\|\partial_t\bdeta\|^2_{L^\infty(0,T;L^2(\Gamma))}
+c\|\bdeta\|^2_{L^\infty(0,T;H^2(\Gamma))}\big )+\mu\|{\bf D}({\bf u})\|^2_{L^2(0,T; L^2(\Omega^{\eta}(t)))}}\\ \\
\displaystyle{+\frac{1}{\alpha}\|u_{\tau}-\partial_t\eta_{\tau}\|^2_{L^2(0,T;L^2(\Gamma^\eta(t)))}
+\sum_{i\in III} \frac{1}{\alpha_i} \|u_{\tau}\|^2_{L^2(0,T;L^2(\Gamma_i))}
\leq E_0 +  C \|{\bf R}\|^2_{L^2(0,T;H^{1}(\Omega^\eta(t))')},}
\end{array}
\end{equation}
where $C$ depends only on the initial data and on the parameters in the problem, and $E_0$ is the kinetic and elastic energy of the initial data.
\label{main}
\end{theorem}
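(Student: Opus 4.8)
\emph{Proof plan.}
The proof is constructive: we combine a backward Euler time discretization with the Lie operator splitting scheme, in the spirit of \cite{BorSun,muha2013nonlinear}. Fix $N\in\N$, let $\Delta t=T/N$ and $t^n=n\Delta t$. Starting from $({\bf u}_0,\bdeta_0,{\bf v}_0)$ we split problem \eqref{NSP}--\eqref{ProblemIC} into two subproblems solved consecutively on each $(t^n,t^{n+1})$: a purely elastic \textbf{structure subproblem}, which advances the displacement and the structure velocity ${\bf v}=\partial_t\bdeta$ via $\rho_Sh({\bf v}^{n+1/2}-{\bf v}^n)/\Delta t+\mathcal{L}_e\bdeta^{n+1/2}={\bf 0}$, $\bdeta^{n+1/2}=\bdeta^n+\Delta t\,{\bf v}^{n+1/2}$; and a \textbf{fluid subproblem} posed on the frozen domain $\Omega^{\eta^n}$ (with its ALE map ${\bf A}_{\eta^n}$ and ALE velocity ${\bf w}^{\eta^n}$ built from $\bdeta^n$ as in Section~\ref{ALEConstruction}), in which the fluid velocity, the pressure and the structure velocity are updated \emph{simultaneously}. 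They must already be coupled at this stage because in the solution space $\mathcal{W}^\eta(0,T)$ only the \emph{normal} traces of ${\bf u}$ and $\partial_t\bdeta$ are tied together: the kinematic constraint ${\bf u}\cdot\bnu^\eta=\partial_t\bdeta\cdot\bnu^\eta$ is built into the discrete space, while the tangential slip enters the fluid subproblem as Robin-type boundary terms with coefficients $1/\alpha$ and $1/\alpha_i$. Solvability of each subproblem follows from the Lax--Milgram lemma; for the fluid subproblem coercivity on the constrained, transformed--divergence--free space is supplied by the transformed Korn inequality of Lemma~\ref{Korn} and by the non-negativity of the slip terms (the pressure is recovered a posteriori and does not appear in the divergence-free weak formulation). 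This yields, for each $N$, approximate solutions which we collect into piecewise-constant-in-time functions ${\bf u}_N,\bdeta_N,{\bf v}_N$ together with the piecewise-linear interpolant of $\bdeta_N$.

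The first substantive step is a \textbf{discrete energy estimate}, uniform in $N$, mirroring \eqref{EI_preliminary}. Testing the structure subproblem with ${\bf v}^{n+1/2}$ and the fluid subproblem with its own discrete solution, and using the identity $(a-b)\cdot a=\tfrac12(|a|^2-|b|^2+|a-b|^2)$ for the discrete inertia $\rho_Sh({\bf v}^{n+1}-{\bf v}^n)$ and for the elastic energy $\langle\mathcal{L}_e(\bdeta^{n+1}-\bdeta^n),\cdot\rangle$, all coupling terms cancel and, after a discrete Gronwall argument, one controls: the discrete kinetic and elastic energies in $L^\infty(0,T)$; the viscous dissipation $\mu\|{\bf D}^\eta({\bf u}_N)\|^2_{L^2}$; the slip dissipation $\tfrac1\alpha\|u_\tau-\partial_t\eta_\tau\|^2_{L^2(\Gamma^\eta)}$ and $\sum_{i\in III}\tfrac1{\alpha_i}\|u_\tau\|^2_{L^2(\Gamma_i)}$; and, crucially, the \emph{numerical dissipation} $\sum_n\|{\bf v}^{n+1}-{\bf v}^n\|^2_{L^2(\Gamma)}$ and $\sum_n\langle\mathcal{L}_e(\bdeta^{n+1}-\bdeta^n),\bdeta^{n+1}-\bdeta^n\rangle\gtrsim\sum_n\|\bdeta^{n+1}-\bdeta^n\|^2_{H^2(\Gamma)}$ --- all bounded by $E_0$ and $\|{\bf R}\|^2_{L^2(0,T;H^1(\Omega_{\rm max})')}$. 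The bounds $\sum_n\|\bdeta^{n+1}-\bdeta^n\|^2_{H^2}\lesssim\Delta t$ and $\sum_n\|{\bf v}^{n+1}-{\bf v}^n\|^2_{L^2(\Gamma)}\lesssim\Delta t$ say precisely that $\bdeta_N$, resp. $\partial_t\bdeta_N$, are bounded uniformly in $N$ in the Nikolskii space $N^{1/2,2}(0,T;H^2_0(\Gamma))$, resp. $N^{1/2,2}(0,T;L^2(\Gamma))$, hence in $H^s(0,T;H^2_0(\Gamma))$, resp. $H^s(0,T;L^2(\Gamma))$, for every $s<1/2$ (\cite{Simon,Simon2}); this fractional-in-time regularity is the device that replaces, in the tangential direction, the regularization the no-slip coupling would transmit from the viscous fluid. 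In parallel, the uniform $L^\infty(0,T;H^2)\cap W^{1,\infty}(0,T;L^2)$ bound on $\bdeta_N$, the H\"older-in-time control \eqref{eta_regularity}, and the smallness hypothesis \eqref{eta0}, let Proposition~\ref{ALEinjectivity} and estimate \eqref{PositivityJ} be applied \emph{uniformly in $N$}: there is a time $T>0$, independent of $N$, on which every ${\bf A}_{\eta_N}(t)$ is injective with $J^{\eta_N}\ge c>0$, on which Lemma~\ref{Korn} holds with a uniform constant, and on which $\Omega^{\eta_N}(t)\subset\Omega_{\rm max}$ (cf. Corollary~\ref{ConvEtaC1}); this is the $T$ of \eqref{T} and of the theorem, and it explains why the result is only local in time. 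Finally, the additional regularity Proposition~\ref{AdditionalReg} gives $\partial_t\bdeta_N$ bounded in $L^2(0,T;H^s(\Gamma))$ for $s<1$, whence, by \eqref{DomVelRegBis}, the ALE velocities ${\bf w}^{\eta_N}$ are bounded in $H^1(0,T;H^{s+1/2}(\Omega))$.

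Next I pass to the limit $N\to\infty$. From the uniform bounds, extract a subsequence with $\bdeta_N\rightharpoonup^*\bdeta$ in $L^\infty(0,T;H^2)$, $\partial_t\bdeta_N\rightharpoonup^*\partial_t\bdeta$ in $L^\infty(0,T;L^2)$, ${\bf u}_N\rightharpoonup{\bf u}$ in $L^2(0,T;H^1(\Omega))$, and weak limits of the interface traces in $L^2(0,T;L^2(\Gamma))$. The decisive compactness comes from Simon's criterion \cite{Simon}: combining the $L^\infty(0,T;H^2)$ bound ($H^2\hookrightarrow\hookrightarrow H^{2-\epsilon}$) with the $H^s(0,T;H^2)$ time-equicontinuity yields $\bdeta_N\to\bdeta$ strongly in $C(0,T;H^{2-\epsilon}(\Gamma))$, which for $\epsilon$ small embeds into $C(0,T;C^1(\Gamma))$; likewise, combining the $L^2(0,T;H^s(\Gamma))$ bound ($s<1$) on $\partial_t\bdeta_N$ with its $H^\sigma(0,T;L^2(\Gamma))$ time-equicontinuity ($\sigma<1/2$) gives $\partial_t\bdeta_N\to\partial_t\bdeta$ strongly in $L^2(0,T;L^2(\Gamma))$ --- in particular in the tangential component. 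Consequently ${\bf A}_{\eta_N}\to{\bf A}_\eta$ strongly in $C(0,T;C^1(\bar\Omega))$ by \eqref{ALEreg3}, so $J^{\eta_N},\nabla^{\eta_N},S^{\eta_N},\btau^{\eta_N},\bnu^{\eta_N}$ converge uniformly and ${\bf w}^{\eta_N}\to{\bf w}^\eta$ strongly in $L^2(0,T;L^2(\Omega))$. For the fluid velocity one needs strong convergence in $L^2(0,T;L^2(\Omega))$; this follows from an Aubin--Lions--Simon argument applied not to ${\bf u}_N$ alone but to the coupled pair $({\bf u}_N,\partial_t\bdeta_N)$ regarded as an element of the constrained space, bounding a fractional time difference of this pair in a weak norm via the discrete weak formulation, exactly as in \cite{BorSun}, now carried out on the moving domains through the uniformly convergent ALE maps. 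With these convergences, passing to the limit in the time-discrete analogue of \eqref{WeakALEFormula} is routine: every term is either linear in a weakly convergent quantity times a uniformly convergent geometric factor and a fixed smooth test function, or --- the convective and ALE-derivative terms --- a product of one strongly and one weakly convergent factor. One then checks $({\bf u},\bdeta)\in\mathcal{W}^\eta(0,T)$, in particular that the normal-trace constraint ${\bf u}|_\Gamma\cdot\bnu^\eta=\partial_t\bdeta\cdot\bnu^\eta$ survives the limit (traces converge weakly, $\bnu^{\eta_N}$ uniformly), so $({\bf u},\bdeta)$ is a weak solution in the sense of Definition~\ref{WeakALE}; the energy estimate \eqref{EI1} follows by weak lower semicontinuity of the norms in the discrete energy inequality.

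\textbf{The main obstacle} is precisely the compactness \emph{in the tangential direction at the fluid--structure interface}. Because the Navier slip condition decouples the tangential components of the fluid and structure velocities, the viscous dissipation no longer controls $\partial_t\bdeta$, and without a substitute there is no way to pass to the limit in the convective term (through ${\bf w}^{\eta_N}$), to identify the limits of the nonlinear geometric factors $S^{\eta_N},\btau^{\eta_N},\bnu^{\eta_N}$, or to close the coupled velocity compactness. The remedy --- and the reason the time-discrete rather than the continuous energy estimate must be used --- is the numerical-dissipation bound $\sum_n\|\bdeta^{n+1}-\bdeta^n\|^2_{H^2}\lesssim\Delta t$ (and its velocity analogue), which places $\bdeta_N$ in $H^s(0,T;H^2)$, $s<1/2$, uniformly, and which, together with the additional spatial regularity $\partial_t\bdeta\in L^2(0,T;H^s(\Gamma))$, $s<1$, of Proposition~\ref{AdditionalReg}, feeds Simon's compactness criterion. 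Producing all these estimates with constants uniform both in $N$ and on one fixed time interval, while every estimate lives on the $N$-dependent moving domain $\Omega^{\eta_N}(t)$, is the technical heart of the argument.
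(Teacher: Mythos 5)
Your proposal follows the paper's line of attack essentially step by step: Lie splitting with backward Euler, a discrete energy inequality whose numerical-dissipation terms yield fractional Sobolev/Nikolskii regularity in time, Simon's compactness criterion, and a uniform-in-$\Delta t$ existence time coming from injectivity of the ALE map and strict positivity of its Jacobian. That said, there are three points worth flagging. First, a slip: the numerical-dissipation bound is $\sum_n\|\bdeta^{n+1}-\bdeta^n\|_{H^2}^2\le C$ uniformly in $\Delta t$, not $\lesssim\Delta t$; the $\sqrt h$ shift estimate (hence $N^{1/2,2}$, hence $H^s$ with $s<1/2$) comes from multiplying this uniform bound by $\Delta t$ when summing shifts, so your intended conclusion is right but the stated bound is off by a factor $\Delta t$. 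Second, for the strong $L^2(0,T;L^2(\Omega))$ convergence of ${\bf u}_{\Delta t}$ you propose an Aubin--Lions argument through a dual-norm bound on time increments derived from the weak formulation; the paper does not need this, because the same numerical dissipation already controls $\sum_n\int_\Omega J^n|{\bf u}^{n+1}-{\bf u}^n|^2$ (via the exactly satisfied discrete geometric conservation law), which gives the direct $\sqrt{h}$ shift estimate of Lemma~\ref{Translations} and then $H^s(0,T;L^2(\Omega))$ regularity in Proposition~\ref{AdditionalReg}, feeding Simon exactly as for $\bdeta$. Third, your final sentence describes the passage to the limit as ``routine,'' but two steps in the paper are genuinely nontrivial and you should not elide them: (i) the test space ${\mathcal Q}^\eta$ depends on the unknown $\eta$, so one must build $\Delta t$-dependent test functions ${\bf q}_{\Delta t}\in{\mathcal V}^n$ from a fixed dense family and prove their convergence together with the convergence of their discrete time derivatives $d{\bf q}_{\Delta t}\to\partial_t{\bf q}$ (Lemmas~\ref{TestFunctionConv} and~\ref{ApproxDer}); and (ii) the terms $\int_\Omega\frac{J_{\Delta t}-T_{\Delta t}J_{\Delta t}}{\Delta t}{\bf u}_{\Delta t}\cdot{\bf q}_{\Delta t}$ coming from the discrete geometric conservation law require a mean-value-theorem argument for $\det$ to identify their limit with $\int_\Omega J^\eta(\nabla^\eta\cdot{\bf w}^\eta){\bf u}\cdot{\bf q}$ (Proposition~\ref{ConvJDer}). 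With these fixes your outline matches the paper's constructive proof.
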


The proof of this theorem is based on the following approach. We design a partitioned time-marching scheme
by using the time-discretization via Lie operator spitting. 
We first separate  the fluid and structure subproblems
and then semi-discretize the resulting sub-problems with respect to time.
The time interval $(0,T)$ is subdivided into $N$ sub-intervals $(0,t_1),\cdots,(t_{N-1},t_N)$ each of width $\Delta t$,
and the fluid and structure sub-problems are solved on each sub-interval. 
First, the structure sub-problem is solved on $(t_{i-1},t_i)$ using for the initial data the solution of the fluid sub-problem from 
the previous time step, and then the fluid sub-problem is solved on $(t_{i-1},t_i)$  using
for the initial data the solution of the just calculated structure sub-problem. 
The process is repeated for each 
sub-interval of $(0,T)$. 
This defines an approximation of the solution of the coupled FSI problem on $(0,T)$.
On each sub-interval $(t_{i-1}, t_i)$
the transfer of information between the two sub-problems is achieved via the ``initial data'' at $t_{i-1}$. 
At each time step only one iteration for the fluid sub-problem and one for the structure sub-problem are sufficient
to obtain a stable and convergent algorithm. 
The goal is to show that as the time-discretization step $\Delta t \to 0$, the sequence of approximate solutions, described above, 
converges to a weak solution of the coupled FSI problem. 

A crucial step in this approach is the way how the fluid and structure sub-problems are designed.
In particular, we had to be careful to take into account the well-known problems related to the so called ``added mass effect'' \cite{causin2005added}
to design the fluid sub-problem in such a way that the fluid and structure inertia are kept close together 
via a Robin-type boundary condition for the fluid sub-problem. In contrast with the no-slip boundary condition,
the slip boundary condition is more tricky to deal with because of the lack of continuity in the tangential component of the velocity at the
fluid-structure interface, and so the smoothing of the interface due to the viscous fluid dissipation is no longer transferred 
to the structure in the tangential direction. New  compactness arguments based on the theorem of Simon \cite{Simon}
and on interpolation of classical Sobolev spaces with real exponents $H^s$ (or alternatively Nikolskii spaces) will be used to obtain the existence result.
Details are presented next.

\section{Approximate solutions}\label{Sec:Approx}
We construct approximate solutions to problem \eqref{NSP}-\eqref{ProblemIC} by using the time-discretization via Lie operator splitting.

\subsection{Operator splitting scheme}\label{Sec:Lie}
Let ${\Delta t}=T/N$ be the time-discretization parameter so that the time interval $(0,T)$ is
sub-divided into $N$ sub-intervals of width $\Delta t$. On each sub-interval we split the problem into a fluid and structure
sub-problem, and linearize each sub-problem appropriately. 
Each of the sub-problems will be discretized in time using the Backward Euler scheme. 

To perform the Lie splitting we must rewrite problem \eqref{NSP}-\eqref{ProblemIC} as a first-order system in time 
\begin{align}
\frac{d {\bf X} }{dt} &= A {\bf X}, \ t\in (0,T), \nonumber \\
{\bf X}|_{t=0} &= {\bf X}^0,
\end{align}
where $A$ is an operator on a Hilbert space, such that $A$ can be split into a non-trivial decomposition $A = A_1 + A_2$.
For this purpose introduce the substitution ${\bf v}=\partial_t\bdeta$, and rewrite the structure acceleration in terms of the first-order
derivative of structure velocity. 
The initial approximation of the solution will be the initial data in the problem, namely, 
${\bf u}^0={\bf u}_0$, $\bdeta^0=\bdeta_0$, and ${\bf v}^0={\bf v}_0$.
For every sub-division of $(0,T)$ containing $N\in\N$ sub-intervals, we recursively define the vector of unknown approximate solutions 
\begin{equation}
{\bf X}_{\Delta t}^{n+\frac i 2}=\left (\begin{array}{c} {\bf u}_{\Delta t}^{n+\frac i 2} \\ {\bf v}_{\Delta t}^{n+\frac i 2} \\ \bdeta_{\Delta t}^{n+\frac i 2} \end{array}\right ), n=0,1,\dots,N-1,\,\ i=1,2,
\label{X}
\end{equation}
where $i = 1,2$ denotes the solution of sub-problems defined by $A_1$ or $A_2$, respectively.
The initial condition is given by the initial data in the problem. 
${\bf X}^0= ({\bf u}_0, {\bf v}_0, \bdeta_0)^T.$

A crucial ingredient for the existence proof is that the semi-discretization of the split problem be performed in 
such a way that a semi-discrete version of the energy inequality \eqref{EI1} is preserved 
at every time step. This is associated with successfully dealing with the ``added mass effect'' \cite{causin2005added,BorSun}.
For this purpose we define a semi-discrete version of the total energy and dissipation at time $n\Delta t$ as follows:
\begin{align}\label{DiscreteEnergy}
&E^{n+i/2}_{\Delta t}=\frac{\rho_F}{2}\int_{\Omega}J^n|{\bf u}^{n}_{\Delta t}|^2+\frac{\rho_S h}{2}\big (\|{\bf v}^{n+i/2}_{\Delta t}\|^2_{L^2(\Gamma)}+\langle{\mathcal L}_e\bdeta^{n+i/2}_{\Delta t},\bdeta^{n+i/2}_{\Delta t}\rangle \big ),\; i=0,1;\\
&D^n_{\Delta t}={\Delta t}
\mu\int_{\Omega}  J^n {\bf D}({\bf u}^n_{\Delta t}):{\bf D}({\bf u}^n_{\Delta t})
+\frac{\Delta t}{2}\sum_{i\in III}\int_{\Gamma_i}\frac{1}{\alpha_i}   ({u^n_{\Delta t}})_\tau ({u^n_{\Delta t}})_\tau
+\frac{\Delta t}{\alpha}\|(v^n_{\Delta t})_{\tau}-(u^n_{\Delta t})_{\tau}\|^2_{L^2(\Gamma)}.\;
\label{Discrete dissipation}
\end{align}
Throughout the rest of this section, we keep the time step $\Delta t$ fixed,
and define the semi-discretized fluid and structure sub-problems.
To simplify notation, we will omit the subscript $\Delta t$ and write
$({\bf u}^{n+\frac i 2},{\bf v}^{n+\frac i 2},\bdeta^{n+\frac i 2})$ instead of 
$({\bf u}^{n+\frac i 2}_{\Delta t},{\bf v}^{n+\frac i 2}_{\Delta t},\bdeta^{n+\frac i 2}_{\Delta t})$.
\vskip 0.2in
\noindent
{\bf THE STRUCTURE SUB-PROBLEM} (Differential formulation):
\begin{equation}\label{StrcutureSub1}
\left .
\begin{array}{c}
\displaystyle{\rho_S h \frac{{\bf v}^{n+1/2}-{\bf v}^n}{\Delta t}+{\mathcal L}_e\bdeta^{n+1}=0,}\\ \\
\displaystyle{\frac{\bdeta^{n+1}-\bdeta^n}{\Delta t}={\bf v}^{n+1/2},}
\end{array}\right \}\;{\rm on}\; \Gamma,
\end{equation}
$$
\bdeta^{n+1}(0)=\partial_z\bdeta^{n+1}(0)=\bdeta^{n+1}(L)=\partial_z\bdeta^{n+1}(L)=0.
$$
The  weak formulation is given by: find $({\bf v}^{n+1/2},\bdeta^{n+1})\in H^2_0(\Gamma)^4$ such that
\begin{equation}\label{StructureWeak}
\begin{array}{c}
\displaystyle{\rho_S h \int_{\Gamma}\frac{{\bf v}^{n+1/2}-{\bf v}^n}{\Delta t}\cdot\bpsi+\langle{\mathcal L}_e\bdeta^{n+1},\bpsi\rangle=0,\quad \bpsi\in [H^2_0(\Gamma)]^2,} \\
\displaystyle{\int_{\Gamma} \frac{\bdeta^{n+1}-\bdeta^n}{\Delta t} \cdot  {\bphi}= \int_\Gamma {\bf v}^{n+1/2} \cdot \bphi,} \quad \bphi \in [L^2(\Gamma)]^2.
\end{array}
\end{equation}

This problem is similar to the structure sub-problem for the fluid-structure interaction problem
studied in  \cite{BorSun}, where the {\sl no-slip boundary condition} was considered,
and only the radial displacement of the thin structure was assumed to be different from zero. 
Using the same ideas as in \cite{BorSun} one can show that the following
existence result and energy estimate hold for problem \eqref{StructureWeak}:

\begin{proposition}\label{prop:Energy1}
For each fixed $\Delta t > 0$, 
problem \eqref{StructureWeak} has a unique solution $({\bf v}^{n+\frac 1 2},\bdeta^{n+\frac 1 2})\in [H^2_0(\Gamma)]^2\times [H_0^2(\Gamma)]^2$.
Moreover, the solution of problem \eqref{StructureWeak} satisfies the following discrete energy inequality:
\begin{equation}
\begin{array}{c}
\displaystyle{E_{\Delta t}^{n+\frac 1 2}+\frac 1 2\big (\rho_sh\|{\bf v}^{n+\frac 1 2}-{\bf v}^{n}\|^2+
c \| \bdeta^{n+\frac 1 2}-\bdeta^{n} \|^2_{H^2_0} \big) \le E_{\Delta t}^n,}
\end{array}
\label{StructureEnergyIE}
\end{equation}
where the kinetic energy $E_{\Delta t}^n$ is defined in \eqref{DiscreteEnergy}, and $c$ is the coercivity constant defined in \eqref{coercivity}.
\end{proposition}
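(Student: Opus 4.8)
The plan is to reduce \eqref{StructureWeak} to a single coercive elliptic problem for the structure velocity and then apply the Lax--Milgram lemma. Since the second equation of \eqref{StructureWeak} holds for all $\bphi\in[L^2(\Gamma)]^2$ and both ${\bf v}^{n+1/2}$ and $(\bdeta^{n+1}-\bdeta^n)/\Delta t$ are sought in $[H^2_0(\Gamma)]^2$, that equation is equivalent to the identity $\bdeta^{n+1}=\bdeta^n+\Delta t\,{\bf v}^{n+1/2}$ in $[H^2_0(\Gamma)]^2$; substituting it into the first equation reduces \eqref{StructureWeak} to finding ${\bf v}^{n+1/2}\in[H^2_0(\Gamma)]^2$ such that
\[
\frac{\rho_S h}{\Delta t}\int_\Gamma {\bf v}^{n+1/2}\cdot\bpsi+\Delta t\,\langle{\mathcal L}_e{\bf v}^{n+1/2},\bpsi\rangle=\frac{\rho_S h}{\Delta t}\int_\Gamma {\bf v}^{n}\cdot\bpsi-\langle{\mathcal L}_e\bdeta^n,\bpsi\rangle,\qquad\bpsi\in[H^2_0(\Gamma)]^2.
\]
The bilinear form on the left is continuous on $[H^2_0(\Gamma)]^2$ (the $L^2$ term via the embedding $H^2_0(\Gamma)\hookrightarrow L^2(\Gamma)$, the ${\mathcal L}_e$ term because ${\mathcal L}_e: H^2_0(\Gamma)\to H^{-2}(\Gamma)$ is bounded) and coercive, since dropping the nonnegative $L^2$ term and using \eqref{coercivity} it dominates $\Delta t\,c\,\|{\bf v}^{n+1/2}\|^2_{H^2_0(\Gamma)}$; the right-hand side is a bounded functional on $[H^2_0(\Gamma)]^2$ once one knows inductively that ${\bf v}^n\in[L^2(\Gamma)]^2$ (with ${\bf v}^0={\bf v}_0$) and $\bdeta^n\in[H^2_0(\Gamma)]^2$ (with $\bdeta^0=\bdeta_0$). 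Lax--Milgram then gives a unique ${\bf v}^{n+1/2}\in[H^2_0(\Gamma)]^2$, and $\bdeta^{n+1}=\bdeta^n+\Delta t\,{\bf v}^{n+1/2}\in[H^2_0(\Gamma)]^2$ (this is the displacement $\bdeta^{n+1/2}$ recorded in \eqref{DiscreteEnergy}) is thereby uniquely determined.

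For the discrete energy inequality I would test the first equation of \eqref{StructureWeak} with $\bpsi={\bf v}^{n+1/2}$ and use two elementary identities: $\int_\Gamma({\bf v}^{n+1/2}-{\bf v}^n)\cdot{\bf v}^{n+1/2}=\tfrac12(\|{\bf v}^{n+1/2}\|^2_{L^2(\Gamma)}-\|{\bf v}^n\|^2_{L^2(\Gamma)}+\|{\bf v}^{n+1/2}-{\bf v}^n\|^2_{L^2(\Gamma)})$ for the inertial term, and, using $\Delta t\,{\bf v}^{n+1/2}=\bdeta^{n+1}-\bdeta^n$ together with the self-adjointness of ${\mathcal L}_e$ (so that $\langle{\mathcal L}_e\cdot,\cdot\rangle$ is a symmetric bilinear form), $\langle{\mathcal L}_e\bdeta^{n+1},{\bf v}^{n+1/2}\rangle=\tfrac{1}{2\Delta t}(\langle{\mathcal L}_e\bdeta^{n+1},\bdeta^{n+1}\rangle-\langle{\mathcal L}_e\bdeta^n,\bdeta^n\rangle+\langle{\mathcal L}_e(\bdeta^{n+1}-\bdeta^n),\bdeta^{n+1}-\bdeta^n\rangle)$ for the elastic term. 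Multiplying the tested equation by $\Delta t$ and comparing with the definition \eqref{DiscreteEnergy} of $E^{n+i/2}_{\Delta t}$ — noting that its fluid part $\tfrac{\rho_F}{2}\int_\Omega J^n|{\bf u}^n_{\Delta t}|^2$ is untouched by the structure step and hence cancels — yields the exact identity
\[
E_{\Delta t}^{n+1/2}+\frac{\rho_S h}{2}\|{\bf v}^{n+1/2}-{\bf v}^n\|^2_{L^2(\Gamma)}+\frac12\langle{\mathcal L}_e(\bdeta^{n+1}-\bdeta^n),\bdeta^{n+1}-\bdeta^n\rangle=E_{\Delta t}^{n},
\]
and \eqref{StructureEnergyIE} follows by bounding $\langle{\mathcal L}_e(\bdeta^{n+1}-\bdeta^n),\bdeta^{n+1}-\bdeta^n\rangle\ge c\,\|\bdeta^{n+1}-\bdeta^n\|^2_{H^2_0(\Gamma)}$ via \eqref{coercivity}.

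I do not expect a genuine obstacle here: this is the structure substep of \cite{BorSun}, and since the argument uses only the self-adjointness of ${\mathcal L}_e$ and the coercivity \eqref{coercivity}, the passage to a general coercive self-adjoint operator and to an $\R^2$-valued displacement is immaterial. The only points needing (minor) care are that the second equation of \eqref{StructureWeak} is posed against $L^2$ test functions, so it must first be promoted to an identity in $[H^2_0(\Gamma)]^2$ before ${\bf v}^{n+1/2}$ is substituted into it; that the energy bookkeeping must be matched term by term with \eqref{DiscreteEnergy}, in particular keeping the frozen fluid part; and that one checks inductively that the data fed into each substep lie in the stated spaces.
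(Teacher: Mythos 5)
Your proposal is correct and follows essentially the same route as the paper: Lax--Milgram for existence and uniqueness (after eliminating $\bdeta^{n+1}=\bdeta^n+\Delta t\,{\bf v}^{n+1/2}$), and then testing the first equation with $\bpsi={\bf v}^{n+1/2}=(\bdeta^{n+1}-\bdeta^n)/\Delta t$ together with the polarization identity $(a-b)\cdot a=\tfrac12(|a|^2+|a-b|^2-|b|^2)$ for both the inertial and elastic terms, followed by the coercivity of ${\cal L}_e$. Your write-up simply makes explicit what the paper states loosely (using the two equivalent forms of the test function in the two terms, and keeping the frozen fluid kinetic energy on both sides), so there is no genuine deviation.
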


\proof
The proof of this proposition is similar to the proof in Propositions 1 and 2 in \cite{BorSun}.
The existence of a unique weak solution follows from the Lax-Milgram Lemma as in \cite{BorSun}. 

The energy inequality is obtained by using ${\bf v}^{n+1/2}$ in place of the structure velocity test function $\bpsi$
in the first term in the first equation in 
\eqref{StructureWeak}, and by using  $\frac{\bdeta^{n+1/2} - \bdeta^n}{\Delta t}$ in place of the structure velocity test function $\bpsi$
in the second term in the first equation in \eqref{StructureWeak}.
After a calculation incorporating the equality $|(a-b) a| = \frac 1 2 (|a|^2 + |a-b|^2 - |b|^2)$, one gets 
\begin{align}
\frac{\rho_S h}{2} \|{\bf v}^{n+1/2}\|^2_{L^2}+\frac{\rho_S h}{2} \|{\bf v}^{n+1/2} - {\bf v}^{n} \|^2_{L^2} +
&\frac{1}{2} \langle {\cal{L}}_e \bdeta^{n+1/2},\bdeta^{n+1/2}\rangle +
\frac{1}{2} \langle {\cal{L}}_e (\bdeta^{n+1/2}-\bdeta^n),(\bdeta^{n+1/2}-\bdeta^n)\rangle 
\nonumber
\\
&= \frac{\rho_S h}{2} \|{\bf v}^{n}\|^2_{L^2} + \frac{1}{2} \langle {\cal{L}}_e \bdeta^{n},\bdeta^{n}\rangle.
\nonumber
\end{align}
We add the term $\frac{\rho_f}{2} \int_\Omega J^n |{\bf u}_{\Delta t}^n |^2$ on both sides of the above equality,
and use the coercivity property of ${\cal{L}}_e$ to obtain the energy inequality \eqref{StructureEnergyIE}.
\qed

{\bf Remark.} We would like to draw the attention of the reader to the fact that the estimate of the term
$\| \bdeta^{n+\frac 1 2}-\bdeta^{n} \|^2_{H^2_0}$ was made possible by the fact that we
are performing the time-discretization via operator splitting, and study semi-discretized problems in time. 
As a result, the approximation of the structure velocity, which was semi-disretized using the backward Euler method, 
gives rise to the term
$\| \bdeta^{n+\frac 1 2}-\bdeta^{n} \|^2_{H^2_0}$, which corresponds to the well-known numerical dissipation term. 
By the iterative application of inequality \eqref{StructureEnergyIE},
the sum with respect to $n$ of the differences  $\| \bdeta^{n+\frac 1 2}-\bdeta^{n} \|^2_{H^2_0}$
 is uniformly bounded by a constant,
which only depends on the initial kinetic energy and the inlet and outlet boundary data, as stated in Proposition~\ref{Stability}
below. 
This estimate gives additional information about the behavior in time of the structure displacement, 
which will be crucial for the compactness arguments established in Section~\ref{Sec:strong}.

\vskip 0.2in

The structure sub-problem updates the position of the elastic boundary $\bdeta^{n+1}$, based on which we can now calculate
 the ALE mapping ${\bf A}^{n+1}$ as the harmonic extension of $\bdeta^{n+1}$, i.e. ${\bf A}^{n+1}={\bf id}+{\bf B}^{n+1}$ where ${\bf B}^{n+1}$ is defined as the solution of the following boundary value problem:
\begin{equation}\label{ALEapp}
\displaystyle{\Delta {\bf B}^{n+1}=0\;{\rm in}\;\Omega,\; {\bf B}^{n+1}_{|\Gamma}=\bdeta^{n+1},\; {\bf B}^{n+1}_{|\Sigma}=0.}
\end{equation}
The corresponding discrete version of the ALE velocity and the Jacobian of the ALE mapping are defined by:
\begin{equation}\label{ALEVelDiscrete}
{\bf w}^{n+1}=\frac{{\bf A}^{n+1}-{\bf A}^n}{\Delta t},\; J^{n+1}=\det \nabla {\bf A}^{n+1}.
\end{equation}

\vskip 0.2in
\noindent
{\bf THE FLUID SUB-PROBLEM} (Differential formulation):
\begin{equation}\label{FluidSub1}
\left.
\begin{array}{rcl}
\rho_F\displaystyle{\frac{{\bf u}^{n+1}-{\bf u}^n}{\Delta t}+\rho_F(({\bf u}^n-{\bf w}^{n+1})\cdot\nabla^{\eta^{n+1}}) {\bf u}^{n+1}}
&=&\displaystyle{\nabla^{\eta^{n+1}}\cdot \bsigma^{\eta^{n+1}}({\bf u}^{n+1},p^{n+1})}+{\bf R}^{n+1}\; \\
{\rm where}\ \displaystyle{{\bf R}^{n+1}}&=&\displaystyle{\frac{1}{\Delta t}\int_{n\Delta t}^{(n+1)\Delta t}{\bf R}}
\end{array}
\right\}\quad {\rm in}\ \Omega
\end{equation}

\begin{equation}
\left. \begin{array}{rcl}
\displaystyle{\frac{{\bf v}^{n+1}-{\bf v}^{n+1/2}}{\Delta t}}&=&\displaystyle{-S^{\eta^{n+1}}\bsigma^{\eta^{n+1}}({\bf u}^{n+1},p^{n+1})\bnu^{\eta^{n+1}}} \; \\
\displaystyle{({\bf u}^{n+1}-{\bf v}^{n+1})\cdot\bnu^{\eta^{n+1}}}&=&\displaystyle{ 0}
\\ 
\displaystyle{\alpha \boldsymbol\sigma^{\eta^{n+1}}({\bf u}^{n+1},p^{n+1})\bnu^{\eta^{n+1}}\cdot\btau^{\eta^{n+1}}}&=&
\displaystyle{({\bf v}^{n+1}-\textbf{u}^{n+1})\cdot\btau^{\eta^{n+1}}}\;
\end{array}
\right\}
\quad {\rm on}\; \Gamma,
\end{equation} 
This system is supplemented with boundary conditions~\eqref{ProblemBC}.

Before we state the corresponding weak formulation, we introduce the following abbreviations to simplify notation:
\begin{equation}\label{Pokrate}
\nabla^{n}:=\nabla^{\eta^n},\; \bsigma^n:=\bsigma^{\eta^n},\; \bnu^n:=\bnu^{\eta^n},\; \btau^n:=\btau^{\eta^n},\; u^n_{\nu}:={\bf u}^{n}\cdot\bnu^n,\;u^n_{\tau}:={\bf u}^{n}\cdot\btau^n.
\end{equation}
We now define the weak solution function space for the fluid sub-problem given in terms of the fluid velocity ${\bf u}$ and its trace ${\bf v}$
on $\Gamma$ as:
$$
{\mathcal V}^n=\{({\bf u},{\bf v})\in H^1(\Omega)^2\times L^2(\Gamma)^2:\nabla^{n}\cdot{\bf u}=0,\; ({\bf u}-{\bf v})\cdot\bnu^n=0,
$$
$$
{\bf u}\cdot\btau=0,\; {\rm on}\ \Gamma_i,\;i\in I,\;{\bf u}=0\ {\rm on}\ \Gamma_i,\;i\in II,\; {\bf u}\cdot\bnu=0,\; {\rm on}\ \Gamma_i,\;i\in III\cup IV \}.
$$
The weak  formulation is defined as follows:
find $({\bf u}^{n+1},{\bf v}^{n+1})\in{\mathcal V}^n$ such that
\begin{equation}\label{WeakFluid}
\begin{array}{c}
\displaystyle{\rho_F\int_{\Omega}J^{n}\frac{{\bf u}^{n+1}-{\bf u}^n}{\Delta t}\cdot{\bf q}
+\frac{\rho_F}{2}\int_{\Omega}\frac{J^{n+1}-J^n}{\Delta t}{\bf u}^{n+1}\cdot{\bf q}
+\frac{\rho_F}{2}\int_{\Omega}J^{n+1}\Big ((({\bf u}^{n}-{\bf w}^{n+1})\cdot\nabla^{n+1}){\bf u}^{n+1}\cdot {\bf q}}\\ \\
\displaystyle{-(({\bf u}^n-{\bf w}^{n+1})\cdot\nabla^{n+1}){\bf q}\cdot {\bf u}^{n+1}\Big)
+2\mu\int_{\Omega} J^{n+1} {\bf D}({\bf u}^{n+1}):{\bf D}({\bf q})
+\sum_{i\in III}\int_{\Gamma_i}\frac{1}{\alpha_i}{u}^{n+1}_\tau q_\tau}
\\ \\
\displaystyle{
+\frac{1}{\alpha}\int_{\Gamma}(u_{\tau}^{n+1}-v_{\tau}^{n+1})q_{\tau^{n+1}}S^{n+1}dz}
\displaystyle{+\rho_S h\int_{\Gamma} \frac{{\bf v}^{n+1}-{\bf v}^{n+1/2}}{\Delta t}\bpsi+\frac{1}{\alpha}\int_{\Gamma}({v}^{n+1}_{\tau}-u^{n+1}_{\tau})\psi_{\tau^{n+1}}S^{n+1}dz}\\
\\
\displaystyle{=\langle {\bf R}^{n+1},{\bf q}\rangle,\quad ({\bf q},\bpsi)\in{\mathcal V}^n.}
\end{array}
\end{equation}
This is obtained by considering \eqref{WeakALEFormula} and by using formula \eqref{J_t} to express
$J^\eta (\nabla^\eta \cdot {\bf w}^\eta) = \partial_t J^\eta$. Furthermore, in the third and fourth integral we replaced $J^n$ with $J^{n+1}$ 
for higher accuracy. This, however, does not influence the existence proof. Either choice works well.

\begin{proposition}
Let $\Delta t>0$, and $J^{n+1}\geq c>0$. Then there exists a unique solution $({\bf u}^{n+1},{\bf v}^{n+1})$ to 
the fluid sub-problem~\eqref{WeakFluid}. Furthermore, the solution satisfies the following semi-discrete energy inequality:
\begin{equation}\label{DiscreteEIFluid}
\begin{array}{c}
\displaystyle{E^{n+1}_{\Delta t}+\frac{\rho_F}{2}\int_{\Omega}J^n\|{\bf u}^{n+1}-{\bf u}^n\|^2_{L^2(\Omega)}+\frac{\rho_S h}{2}\|{\bf v}^{n+1}-{\bf v}^{n+1/2}\|^2_{L^2(\Gamma)}}\\ \\ 
\displaystyle{+  D^{n+1}_{\Delta t} \leq E^{n+1/2}_{\Delta t}
+C \Delta t \|{\bf R}^{n+1}\|^2_{(H^{1}(\Omega))'},}
\end{array}
\end{equation}
where $(H^1(\Omega))'$ is the dual space of $H^1(\Omega)$. 
\label{ExistenceFluidSub}
\end{proposition}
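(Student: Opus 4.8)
The plan is to separate the two assertions — existence/uniqueness of the solution, and the energy inequality — and to handle existence by a Galerkin-type argument on the divergence-constrained space $\mathcal{V}^n$, exploiting that $J^{n+1}\ge c>0$ makes all the bilinear forms coercive in the appropriate norms. First I would observe that the left-hand side of \eqref{WeakFluid}, viewed as a map in the unknown $({\bf u}^{n+1},{\bf v}^{n+1})$ with the test pair $({\bf q},\bpsi)$ ranging over $\mathcal{V}^n$, splits into a bilinear part and a linear functional (the term $\langle{\bf R}^{n+1},{\bf q}\rangle$ plus the data contributions hidden in the $\frac{{\bf u}^{n+1}-{\bf u}^n}{\Delta t}$ and $\frac{{\bf v}^{n+1}-{\bf v}^{n+1/2}}{\Delta t}$ terms). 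The convective terms $\frac{\rho_F}{2}\int_\Omega J^{n+1}\big((({\bf u}^n-{\bf w}^{n+1})\cdot\nabla^{n+1}){\bf u}^{n+1}\cdot{\bf q} - (({\bf u}^n-{\bf w}^{n+1})\cdot\nabla^{n+1}){\bf q}\cdot{\bf u}^{n+1}\big)$ are skew-symmetric by construction, so they contribute nothing to the diagonal; combined with the $\frac{\rho_F}{2}\int_\Omega\frac{J^{n+1}-J^n}{\Delta t}$ term (which exactly cancels the "extra half" coming from the skew-symmetrization, using that ${\bf u}^n$ — not ${\bf u}^{n+1}$ — is frozen in the transport coefficient) this is precisely the algebraic identity that yields the energy balance. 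The diagonal of the bilinear form is then $\frac{\rho_F}{\Delta t}\int_\Omega J^n|{\bf u}|^2 + 2\mu\int_\Omega J^{n+1}{\bf D}({\bf u}):{\bf D}({\bf u}) + \sum_{i\in III}\frac{1}{\alpha_i}\int_{\Gamma_i}u_\tau^2 + \frac{1}{\alpha}\int_\Gamma(u_\tau - v_\tau)^2 S^{n+1} + \frac{\rho_S h}{\Delta t}\int_\Gamma|{\bf v}|^2$, which is manifestly nonnegative and, by the transformed Korn inequality (Lemma~\ref{Korn}) together with $J^{n+1}\ge c>0$ and the trace inequality, dominates $\|{\bf u}\|_{H^1(\Omega)}^2 + \|{\bf v}\|_{L^2(\Gamma)}^2$; hence the form is coercive on $\mathcal{V}^n$.

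For existence I would not apply Lax--Milgram directly because $\mathcal{V}^n$ carries the pointwise constraint $\nabla^{n+1}\cdot{\bf u}=0$ together with the coupling $({\bf u}-{\bf v})\cdot\bnu^{n+1}=0$, so I would instead run a finite-dimensional Galerkin scheme: pick a basis of $\mathcal{V}^n$ (a separable Hilbert space), solve the resulting square linear system on each finite-dimensional subspace — solvability is immediate from coercivity of the diagonal, since a homogeneous solution would have zero energy hence be zero — obtain a uniform bound from the energy estimate, and pass to the weak limit; the convective term passes to the limit because ${\bf u}^n$ and ${\bf w}^{n+1}$ are fixed data so that term is linear in the unknown. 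Uniqueness follows by subtracting two solutions and testing with the difference: the skew-symmetry of the convective term kills it, and coercivity of the remaining diagonal forces the difference to vanish.

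For the energy inequality \eqref{DiscreteEIFluid} I would test \eqref{WeakFluid} with $({\bf q},\bpsi)=({\bf u}^{n+1},{\bf v}^{n+1})$ itself (legitimate since $({\bf u}^{n+1},{\bf v}^{n+1})\in\mathcal{V}^n$). The convective terms vanish by skew-symmetry. For the time-difference terms I would use the identity $(a-b)\cdot a = \tfrac12(|a|^2 + |a-b|^2 - |b|^2)$ on $\rho_F\int_\Omega J^n\frac{{\bf u}^{n+1}-{\bf u}^n}{\Delta t}\cdot{\bf u}^{n+1}$ and on $\rho_S h\int_\Gamma\frac{{\bf v}^{n+1}-{\bf v}^{n+1/2}}{\Delta t}\cdot{\bf v}^{n+1}$; the $\frac{\rho_F}{2}\int_\Omega\frac{J^{n+1}-J^n}{\Delta t}|{\bf u}^{n+1}|^2$ term then combines with the $J^n$ kinetic term to produce exactly $\frac{\rho_F}{2}\int_\Omega J^{n+1}|{\bf u}^{n+1}|^2$ (the discrete analogue of $\partial_t J = J\nabla^\eta\cdot{\bf w}$), which is the kinetic part of $E^{n+1}_{\Delta t}$. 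The boundary slip terms on $\Gamma$ combine: the pair $\frac{1}{\alpha}\int_\Gamma(u_\tau^{n+1}-v_\tau^{n+1})u_{\tau^{n+1}}S^{n+1} + \frac{1}{\alpha}\int_\Gamma(v_\tau^{n+1}-u_\tau^{n+1})v_{\tau^{n+1}}S^{n+1} = \frac{1}{\alpha}\int_\Gamma(u_\tau^{n+1}-v_\tau^{n+1})^2 S^{n+1}$, giving the dissipation term in $D^{n+1}_{\Delta t}$. The only term needing an inequality rather than an identity is the right-hand side: I would estimate $\langle{\bf R}^{n+1},{\bf u}^{n+1}\rangle \le \|{\bf R}^{n+1}\|_{(H^1)'}\|{\bf u}^{n+1}\|_{H^1}$, use Korn plus $J^{n+1}\ge c$ to bound $\|{\bf u}^{n+1}\|_{H^1}^2$ by the viscous dissipation, and absorb with Young's inequality — the viscous term $2\mu\int_\Omega J^{n+1}{\bf D}({\bf u}^{n+1}):{\bf D}({\bf u}^{n+1})$ is split, a fraction retained on the left as part of $D^{n+1}_{\Delta t}$ and a fraction used to absorb, producing the $C\Delta t\|{\bf R}^{n+1}\|_{(H^1)'}^2$ term on the right.

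The main obstacle I anticipate is the bookkeeping that makes the Jacobian terms telescope correctly: one has to be precise that the discrete transport velocity ${\bf w}^{n+1}$ and the Jacobians $J^n, J^{n+1}$ are chosen so that $\frac{\rho_F}{2}\int_\Omega\frac{J^{n+1}-J^n}{\Delta t}|{\bf u}^{n+1}|^2$ is exactly the term that upgrades $\frac{\rho_F}{2}\int_\Omega J^n|{\bf u}^{n+1}|^2$ to $\frac{\rho_F}{2}\int_\Omega J^{n+1}|{\bf u}^{n+1}|^2$ after the $\tfrac12(|a|^2+|a-b|^2-|b|^2)$ step — i.e. that the "$+\tfrac12|{\bf u}^{n+1}|^2 J^n$" produced by the difference identity plus "$+\tfrac12(J^{n+1}-J^n)|{\bf u}^{n+1}|^2$" equals "$+\tfrac12 J^{n+1}|{\bf u}^{n+1}|^2$". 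A secondary subtlety is checking that the geometric weight $S^{n+1}$ on the interface slip terms is consistent between the fluid equation line and the structure coupling line so the cross terms genuinely cancel into a perfect square; since both are built from the same $\bdeta^{n+1}$ via \eqref{ALEapp} this holds, but it must be verified. Everything else is routine once coercivity and skew-symmetry are in place.
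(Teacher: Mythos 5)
Your energy-inequality derivation matches the paper's: you test with $({\bf q},\bpsi)=({\bf u}^{n+1},{\bf v}^{n+1})$, apply the identity $(a-b)\cdot a=\tfrac12(|a|^2+|a-b|^2-|b|^2)$, exploit skew-symmetry of the symmetrized convective terms, and observe that the $\frac{\rho_F}{2}\int_\Omega\frac{J^{n+1}-J^n}{\Delta t}|{\bf u}^{n+1}|^2$ contribution upgrades the kinetic energy from weight $J^n$ to weight $J^{n+1}$ --- which is exactly the paper's discrete geometric conservation law \eqref{Geometric_CL}. The treatment of the slip terms as a perfect square and the absorption of the source via Korn plus Young's inequality is also what the paper (implicitly, via \cite{BorSun}) does, and your bookkeeping comment correctly identifies the one place where precision matters.

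For existence, however, you take a detour. The paper invokes Lax--Milgram on $\mathcal{V}^n$ together with the transformed Korn inequality, and that is all that is needed: your stated reason for avoiding Lax--Milgram --- that $\mathcal{V}^n$ carries the constraints $\nabla^{n+1}\cdot{\bf u}=0$ and $({\bf u}-{\bf v})\cdot\bnu^{n+1}=0$ --- is not an actual obstruction, since these constraints define a \emph{closed} subspace of $H^1(\Omega)^2\times L^2(\Gamma)^2$, hence a Hilbert space, and Lax--Milgram applies to any Hilbert space (symmetry of the form is not required). Your Galerkin scheme reaches the same conclusion but re-derives what Lax--Milgram gives for free. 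One further small imprecision: in your first paragraph you describe the $\frac{\rho_F}{2}\int_\Omega\frac{J^{n+1}-J^n}{\Delta t}$ term as cancelling an ``extra half'' from the skew-symmetrization. That mischaracterizes its role --- the symmetrized convective term vanishes \emph{completely} when tested with the solution and produces no leftover half; the Jacobian term's actual job is the $J^n\to J^{n+1}$ upgrade, which you describe correctly in your later paragraph. So the proposal is sound, but the existence route is needlessly long and the Lax--Milgram objection should be dropped.
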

\proof

The existence proof follows  from the Lax-Milgram Lemma and the transformed Korn's inequality stated in Lemma~\ref{Korn},
see \cite{BorSun}.

The semi-discrete energy inequality \eqref{DiscreteEIFluid}
is a consequence of the fact that we have discretized our fluid sub-problem given by \eqref{WeakFluid}
so that the discrete version of the geometric conservation law is satisfied. 
More precisely, by taking ${\bf q}={\bf u}^{n+1}$
in the following two terms in the weak formulation~\eqref{WeakFluid}:
$$
\rho_F\int_{\Omega}J^n\frac{{\bf u}^{n+1}-{\bf u}^n}{\Delta t}\cdot{\bf q}+\frac{\rho_F}{2}\int_{\Omega}\frac{J^{n+1}-J^n}{\Delta t}{\bf u}^{n+1}\cdot{\bf q}
$$
we see that with this kind of discretization a {\em semi-discrete version of the geometric conservation law is exactly satisfied}, i.e. we have:
\begin{equation}\label{Geometric_CL}
\frac{\rho_F}{2}\Big (\int_{\Omega}J^{n+1}|{\bf u}^{n+1}|^2+\int_{\Omega}J^{n}|{\bf u}^{n+1}-{\bf u}^n|^2\Big )=\frac{\rho_F}{2}\int_{\Omega}J^{n}|{\bf u}^{n}|^2.
\end{equation}
Thus, the fluid kinetic energy at time $(n+1)\Delta t$ plus the kinetic energy due to the fluid domain motion, is exactly equal to the 
fluid kinetic energy at time $n \Delta t$.
The two terms on the left hand-side in \eqref{Geometric_CL} appear on the left hand-side in the energy estimate \eqref{DiscreteEIFluid}, 
while the term on the right hand-side in \eqref{Geometric_CL} appears on the right hand-side in the energy estimate \eqref{DiscreteEIFluid}.
\qed

\begin{proposition}{\bf (Uniform semi-discrete energy estimates)}
Let $\Delta t > 0$ and $N=T/\Delta t > 0$. 
Furthermore, let $E_{\Delta t}^{n+\frac 1 2}, E_{\Delta t}^{n+1}$, and $D_{\Delta t}^n$ be the kinetic energy and dissipation 
given by \eqref{DiscreteEnergy} and \eqref{Discrete dissipation}, respectively.
There exists a constant $C>0$ independent of $\Delta t$, which depends only on the parameters in the problem, 
on the kinetic energy of the initial data $E_0$, and on the  norm of the right-hand side $\|{\bf R}\|_{L^2(0,T;H^{1}(\Omega)')}^2$ (i.e. on the boundary data),
such that the following estimates hold:
\vskip 0.1in
\noindent
1. $\displaystyle{E_{\Delta t}^{n+\frac 1 2}\leq C, E_{\Delta t}^{n+1}\leq C}$, for all $ n = 0,...,N-1, $\\
2. $\displaystyle{\sum_{n=1}^ND_{\Delta t}^n\leq C},$\\
3. $\displaystyle{\sum_{n=0}^{N-1}
\Big(
\int_{\Omega}J^n_{\Delta t} } |{\bf u}_{\Delta t}^{n+1}-{\bf u}_{\Delta t}^n|^2+\|{\bf v}_{\Delta t}^{n+1}-{\bf v}_{\Delta t}^{n+\frac 1 2}\|^2_{L^2(\Gamma)}
 +c\|\bdeta_{\Delta t}^{n+1}-\bdeta_{\Delta t}^n\|^2_{H^2(\Gamma)}+\|{\bf v}_{\Delta t}^{n+\frac 1 2}-{\bf v}_{\Delta t}^{n}\|^2_{L^2(\Gamma)}
 \Big)\leq C.$
\vskip 0.1in
In fact, $C = E_0 + \tilde{C} \|{\bf R}\|^2_{L^2(0,T;H^{1}(\Omega)')}$, where $\tilde{C}$ is a constant which depends only on the parameters in the problem, and ${\bf R}$ is the term, defined in \eqref{SourceTermR},  coming from the dynamic pressure data.
\label{Stability}
\end{proposition}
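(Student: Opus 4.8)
The strategy is to chain the two semi-discrete energy inequalities already in hand — the structure estimate \eqref{StructureEnergyIE} of Proposition~\ref{prop:Energy1} and the fluid estimate \eqref{DiscreteEIFluid} of Proposition~\ref{ExistenceFluidSub} — into a single one-step inequality, and then telescope over $n$. Recall that on each sub-interval the structure step produces $({\bf v}^{n+1/2},\bdeta^{n+1})$ (the displacement is updated only there, and $\bdeta^{n+1}$ is the value entering the fluid step), while the fluid step produces $({\bf u}^{n+1},{\bf v}^{n+1})$. Adding \eqref{StructureEnergyIE} to \eqref{DiscreteEIFluid}, the common quantity $E^{n+1/2}_{\Delta t}$ cancels, and one is left, for every $n=0,\dots,N-1$, with
\begin{align*}
E^{n+1}_{\Delta t} &+ \frac{\rho_F}{2}\int_{\Omega}J^n|{\bf u}^{n+1}-{\bf u}^n|^2 + \frac{\rho_S h}{2}\|{\bf v}^{n+1}-{\bf v}^{n+1/2}\|^2_{L^2(\Gamma)} + \frac{\rho_S h}{2}\|{\bf v}^{n+1/2}-{\bf v}^{n}\|^2_{L^2(\Gamma)} \\
&+ \frac{c}{2}\|\bdeta^{n+1}-\bdeta^n\|^2_{H^2_0(\Gamma)} + D^{n+1}_{\Delta t} \ \le\ E^n_{\Delta t} + C\,\Delta t\,\|{\bf R}^{n+1}\|^2_{(H^1(\Omega))'}.
\end{align*}

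Every term added to $E^{n+1}_{\Delta t}$ on the left is nonnegative, so summing this inequality over $n=0,\dots,m-1$ for any $m\le N$ telescopes the energies and bounds $E^m_{\Delta t}$, together with the partial sums of the dissipation $D^n_{\Delta t}$ and of the four ``numerical dissipation'' difference terms, by $E^0_{\Delta t}+C\Delta t\sum_{n=0}^{m-1}\|{\bf R}^{n+1}\|^2_{(H^1(\Omega))'}$. Here $E^0_{\Delta t}=E_0$ is finite by the hypotheses ${\bf u}_0\in L^2(\Omega^0)$, ${\bf v}_0\in L^2(\Gamma)$, $\bdeta_0\in H^2_0(\Gamma)$ (and $J^0=\det\nabla\bphi^0>0$ by \eqref{StructureCC}); and since ${\bf R}^{n+1}=\frac1{\Delta t}\int_{n\Delta t}^{(n+1)\Delta t}{\bf R}$ (see \eqref{FluidSub1}), Jensen's inequality gives $\|{\bf R}^{n+1}\|^2_{(H^1)'}\le\frac1{\Delta t}\int_{n\Delta t}^{(n+1)\Delta t}\|{\bf R}(t)\|^2_{(H^1)'}\,dt$, whence $\Delta t\sum_{n=0}^{N-1}\|{\bf R}^{n+1}\|^2_{(H^1)'}\le\|{\bf R}\|^2_{L^2(0,T;H^1(\Omega)')}$. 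Therefore the right-hand side is bounded by $C:=E_0+\tilde C\|{\bf R}\|^2_{L^2(0,T;H^1(\Omega)')}$, with $\tilde C$ depending only on the parameters. Taking $m=N$ and discarding the nonnegative summands on the left gives statements 2 and 3; taking the partial sum up to an arbitrary $m$ gives $E^m_{\Delta t}\le C$ for every $m$, and then the structure inequality \eqref{StructureEnergyIE} gives $E^{n+1/2}_{\Delta t}\le E^n_{\Delta t}\le C$ as well, which is statement 1.

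The one point that needs care — and which I expect to be the real obstacle — is that invoking Proposition~\ref{ExistenceFluidSub}, hence the fluid estimate \eqref{DiscreteEIFluid}, presupposes $J^{n+1}\ge c>0$ together with the transformed Korn inequality of Lemma~\ref{Korn}, i.e.\ it presupposes that the ALE map ${\bf A}^{n+1}$ is injective with uniformly positive Jacobian and a Korn constant independent of $n$ and $\Delta t$. These facts must therefore be obtained together with the energy bound, by induction on $n$. Assuming the estimates up to step $n$, the bound $E^{k+1/2}_{\Delta t}\le C$ controls $\|{\bf v}^{k+1/2}\|_{L^2(\Gamma)}$, hence $\|\bdeta^{k+1}-\bdeta^k\|_{L^2(\Gamma)}\le C\Delta t$, while coercivity \eqref{coercivity} applied in $E^{k+1/2}_{\Delta t}$ controls $\|\bdeta^{k+1}\|_{H^2_0(\Gamma)}\le C$; interpolating these two bounds (as in \eqref{eta_regularity}) yields, uniformly in $\Delta t$, $\|\bdeta^{m}_{\Delta t}-\bdeta_0\|_{H^{11/6}(\Gamma)}\le C\,(m\Delta t)^{1/12}$ for the piecewise interpolant, which is small for $m\Delta t$ small. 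Combined with the smallness condition \eqref{eta0} on $\bdeta_0$ and the ALE regularity estimate \eqref{ALEreg3}, the argument of Proposition~\ref{ALEinjectivity} (via Ciarlet's Theorem~\ref{Ciarlet}) and the continuity of $J^{\eta}$ in time then produce a $T^{*}>0$, independent of $\Delta t$, on which ${\bf A}^{n+1}$ stays injective and $J^{n+1}\ge c>0$; shrinking $T$ to $T^{*}$ if necessary, Proposition~\ref{ExistenceFluidSub} applies at step $n+1$ and the induction closes. Once this is in place the energy bookkeeping above is routine; the delicate part is precisely keeping the time of validity $T^{*}$ and the Jacobian/Korn constants uniform in $\Delta t$ using only the discrete bounds.
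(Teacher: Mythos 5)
Your proof is correct and follows essentially the same route as the paper's: add the structure inequality \eqref{StructureEnergyIE} and the fluid inequality \eqref{DiscreteEIFluid} so that $E^{n+1/2}_{\Delta t}$ cancels, telescope over $n$, and bound the source term via $\Delta t\sum_n\|{\bf R}^{n+1}\|^2_{(H^1)'}\le C\|{\bf R}\|^2_{L^2(0,T;(H^1)')}$ (Jensen/Cauchy--Schwarz applied to ${\bf R}^{n+1}=\frac{1}{\Delta t}\int_{n\Delta t}^{(n+1)\Delta t}{\bf R}$). Your last paragraph is the one place you go further than the paper, and it is a genuine sharpening: the paper invokes \eqref{DiscreteEIFluid}, which is stated conditionally on $J^{n+1}\ge c>0$ and a uniform Korn constant, yet establishes these uniformly (in $\Delta t$ and $n$) only afterwards in Proposition~\ref{EtaBound}, whose proof in turn quotes Proposition~\ref{Stability}; the inductive bootstrap you describe, using the interpolation bound $\|\bdeta^m_{\Delta t}-\bdeta_0\|_{H^{11/6}}\le C\,(m\Delta t)^{1/12}$ together with \eqref{eta0} and Ciarlet's injectivity criterion to obtain a $T$ independent of $\Delta t$, is exactly what is needed to close this loop rigorously, and is left implicit in the paper.
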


\proof
The proof of Proposition~\ref{Stability} follows directly from the energy estimates \eqref{StructureEnergyIE} and \eqref{DiscreteEIFluid},
and from the H\"{o}lder inequality applied to the source term ${\bf R}$. More precisely, the statements in  Proposition~\ref{Stability}
are obtained after summing the combined energy estimates \eqref{StructureEnergyIE} and \eqref{DiscreteEIFluid} over $n = 0, \cdots, N-1$,
and after  taking into account that 
$$
\Delta t \sum_{n=0}^{N-1} \|{\bf R}^{n+1}\|^2_{(H^{1}(\Omega))'} = \Delta t \sum_{n=0}^{N-1} \| \frac{1}{\Delta t} \int_{n\Delta t}^{{n+1}\Delta t} {\bf R}\|^2 \le
\tilde{C} \| {\bf R} \|^2_{L^2(0,T; (H^{1}(\Omega))')}.
$$
\qed

This is a crucial estimate which will provide uniform boundedness of 
approximating solutions to problem \eqref{NSP}-\eqref{ProblemIC}, constructed using our 
semi-discretized scheme based on Lie splitting. 
However, notice that we have so far only defined the approximate values of our solution at discrete points in time, 
given by $n \Delta t$. We want to define approximate solutions to be defined at all the points in $(0,T)$. 
For this purpose we define approximate solutions  to be the functions which are piece-wise constant 
on each sub-interval $((n-1)\Delta t,n\Delta t],\ n=1\dots $ of $(0,T)$, such that for 
$t\in ((n-1)\Delta t,n\Delta t],\ n=1\dots N,$
\begin{equation}
{\bf u}_{\Delta t}(t,.)={\bf u}_{\Delta t}^n,\ \eta_{\Delta t}(t,.)=\eta_{\Delta t}^n,\ {\bf v}_{\Delta t}(t,.)={\bf v}_{\Delta t}^n,\ v^*_{\Delta t}(t,.)={\bf v}^{n-\frac 1 2}_{\Delta t}.
\label{aproxNS}
\end{equation}
See Figure~\ref{fig:u_N}. We define other approximate quantities in analogous way, i.e.
$$
{\bf A}_{\Delta t}(t,.)={\bf A}_{\Delta t}^n,\ \bnu_{\Delta t}(t,.)=\bnu_{\Delta t}^n,\ \btau_{\Delta t}(t,.)=\btau_{\Delta t}^n,\; {\bf w}_{\Delta t}(t,.)={\bf w}^n_{\Delta t},\ S_{\Delta t}(t,.)=S_{\Delta t}^n,\ J_{\Delta t}(t,.)=J_{\Delta t}^n.
$$
\begin{figure}[ht]
\centering{
\includegraphics[scale=0.45]{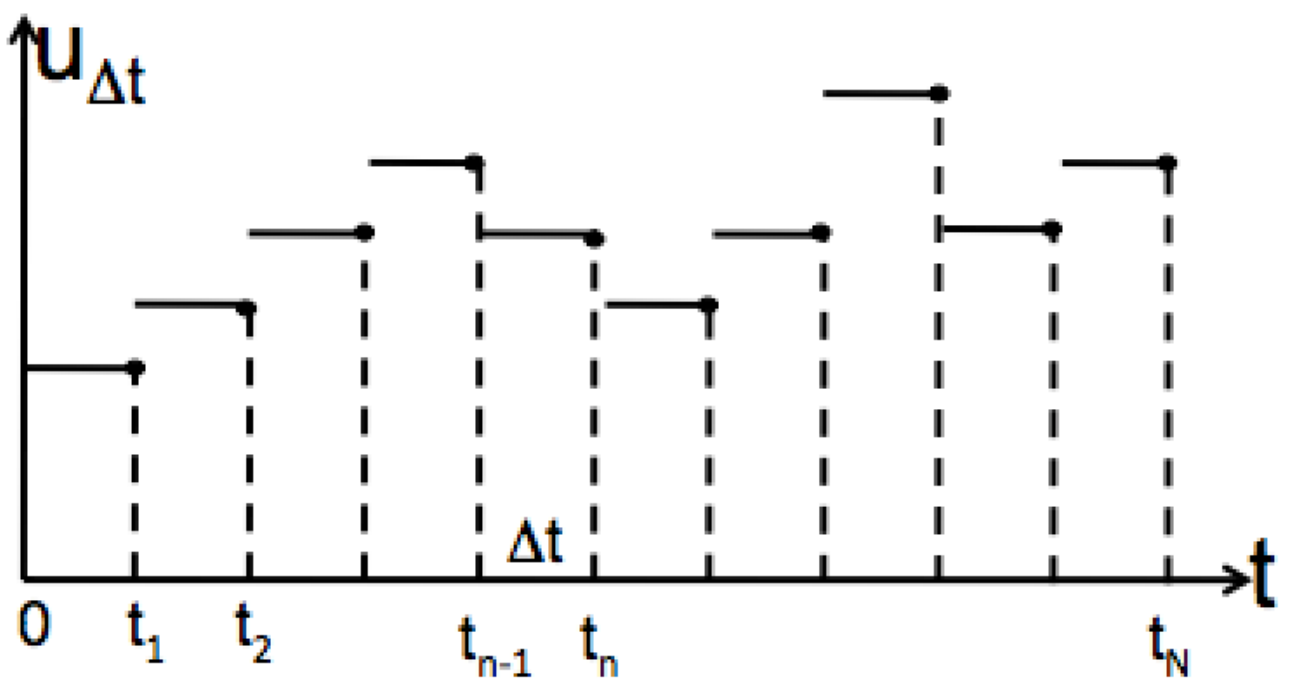}
}
\caption{A sketch of $u_{\Delta t}$.}
\label{fig:u_N}
\end{figure}
Our goal is to show that there exists a subsequence of the sequence of approximating solutions
 defined above, which converges to a weak solution of problem \eqref{NSP}-\eqref{ProblemIC}. 
 
\section{Convergence of approximate solutions}

\subsection{Weak and weak* convergence}\label{Sec:weak}
We now focus on the sequences of approximate solutions 
$(\bdeta_{\Delta t})_{\Delta t}$, $({\bf u}_{\Delta t})_{\Delta t}$, and $({\bf v}_{\Delta t})_{\Delta t}$,
as $\Delta t \to 0$ (or, equivalently, as $N \to \infty$). We first show that these sequences are uniformly bounded,
independently of $\Delta t$,
in the appropriate function norms. The main ingredient in showing the 
uniform estimates 
will be the results of Proposition~\ref{Stability}.
\begin{proposition}\label{EtaBound}
The sequence $(\bdeta_{\Delta t})_{\Delta t}$ is uniformly bounded in $L^{\infty}(0,T;H_0^2(\Gamma))^2$. 
Moreover, there exists a $T$ small enough such that ${\bf A}_{\Delta t}^n$ is an injection, and
$$
J^n_{\Delta t}=\det \nabla {\bf A}_{\Delta t}^n>0,\quad\Delta t>0,n=1,\dots N.
$$
\end{proposition}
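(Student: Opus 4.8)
The plan is to separate the two assertions: the $L^{\infty}(0,T;H^2_0(\Gamma))^2$ bound on $\bdeta_{\Delta t}$ is an immediate consequence of the uniform discrete energy estimates, while the geometric statement (injectivity of ${\bf A}^n_{\Delta t}$ and positivity of $J^n_{\Delta t}$) requires an additional smallness-in-time argument combined with Ciarlet's criterion, Theorem~\ref{Ciarlet}.

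For the first assertion, I would use Proposition~\ref{Stability}, part 1, which gives $E^{n+i/2}_{\Delta t}\le C$ with $C$ independent of $\Delta t$ and of $n$. By the definition \eqref{DiscreteEnergy}, $E^{n+i/2}_{\Delta t}$ contains the term $\frac{\rho_S h}{2}\langle{\mathcal L}_e\bdeta^{n+i/2}_{\Delta t},\bdeta^{n+i/2}_{\Delta t}\rangle$, which by the coercivity \eqref{coercivity} dominates $\frac{\rho_S h}{2}c\,\|\bdeta^{n+i/2}_{\Delta t}\|^2_{H^2_0(\Gamma)}$. Hence $\|\bdeta^{n}_{\Delta t}\|_{H^2_0(\Gamma)}\le C$ for all $n$, and since the piecewise-constant interpolant $\bdeta_{\Delta t}$ defined in \eqref{aproxNS} only takes these values, $\|\bdeta_{\Delta t}\|_{L^{\infty}(0,T;H^2_0(\Gamma))}\le C$.

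For the second assertion I would argue as in Proposition~\ref{ALEinjectivity}, but keeping track of uniformity in $\Delta t$. Write ${\bf A}^n_{\Delta t}={\bf id}+{\bf B}^n_{\Delta t}$, where ${\bf B}^n_{\Delta t}$ is the harmonic extension \eqref{ALEapp} of $\bdeta^n_{\Delta t}$; since ${\bf B}^n_{\Delta t}$ solves the same elliptic problem as in Section~\ref{ALEConstruction}, the Grisvard-based estimate \eqref{ALEreg2} gives $\sup_{\bar\Omega}|\nabla{\bf B}^n_{\Delta t}|\le\|{\bf B}^n_{\Delta t}\|_{C^{1,1/3}(\bar\Omega)}\le\tilde C\,\|\bdeta^n_{\Delta t}\|_{H^{11/6}(\Gamma)}$. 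So it suffices to show the right-hand side is small, uniformly in $n$ and $\Delta t$, once $T$ is small. I would first control $\|\bdeta^n_{\Delta t}-\bdeta_0\|_{L^2(\Gamma)}$: telescoping and using $\bdeta^{k+1}_{\Delta t}-\bdeta^k_{\Delta t}=\Delta t\,{\bf v}^{k+1/2}_{\Delta t}$ from \eqref{StrcutureSub1} together with the uniform bound $\|{\bf v}^{k+1/2}_{\Delta t}\|_{L^2(\Gamma)}\le C$ (again Proposition~\ref{Stability}, part 1),
$$
\|\bdeta^n_{\Delta t}-\bdeta_0\|_{L^2(\Gamma)}\le\Delta t\sum_{k=0}^{n-1}\|{\bf v}^{k+1/2}_{\Delta t}\|_{L^2(\Gamma)}\le C\,n\Delta t\le CT.
$$
Interpolating between $L^2(\Gamma)$ and $H^2_0(\Gamma)$ (so that $H^{11/6}$ appears with interpolation exponent $11/12$), and using the $H^2_0$ bound from the first part together with \eqref{eta0}, this yields $\|\bdeta^n_{\Delta t}-\bdeta_0\|_{H^{11/6}(\Gamma)}\le C(CT)^{1/12}\to0$ as $T\to0$. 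Combining this with the smallness \eqref{eta0} of $\|\bdeta_0\|_{H^{11/6}}$, for $T$ small enough we get $\tilde C\,\|\bdeta^n_{\Delta t}\|_{H^{11/6}(\Gamma)}<\min\{1,c(\Omega)\}$ for all $n$ and $\Delta t$, where $c(\Omega)$ is the Ciarlet constant. Then Theorem~\ref{Ciarlet}(A) gives $\det\nabla{\bf A}^n_{\Delta t}>0$ and Theorem~\ref{Ciarlet}(B) gives injectivity of ${\bf A}^n_{\Delta t}$.

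One logical point must be addressed: the energy bound invoked above is proved only once the fluid sub-problems \eqref{WeakFluid} are solvable, which requires $J^n_{\Delta t}>0$. I would remove this apparent circularity by induction on $n$: assuming $J^k_{\Delta t}>0$ and ${\bf A}^k_{\Delta t}$ injective for $k\le n$, the estimates \eqref{StructureEnergyIE}--\eqref{DiscreteEIFluid} hold up to step $n$, which bounds $\|{\bf v}^{k+1/2}_{\Delta t}\|_{L^2(\Gamma)}$ for $k\le n$ and hence, by the telescoping estimate above, gives $\|\bdeta^{n+1}_{\Delta t}-\bdeta_0\|_{L^2(\Gamma)}\le CT$ and so $J^{n+1}_{\Delta t}>0$ with ${\bf A}^{n+1}_{\Delta t}$ injective; the base case $n=0$ is the compatibility condition \eqref{StructureCC}. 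I expect the main obstacle to be exactly this: extracting a \emph{strong-norm}, \emph{uniform-in-$\Delta t$} smallness of $\bdeta^n_{\Delta t}-\bdeta_0$ (strong enough to feed into the $C^1$ bound required by Ciarlet) out of only an $L^2$-in-time control of the discrete velocities; the $L^2$-versus-$H^2$ interpolation resolves it, at the price of restricting to a short time interval $(0,T)$.
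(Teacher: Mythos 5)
Your proposal follows essentially the same route as the paper: boundedness from Proposition~\ref{Stability} plus coercivity, the telescoping $L^2$-estimate $\|\bdeta^n_{\Delta t}-\bdeta_0\|_{L^2}\le CT$, interpolation up to $H^{11/6}$, the Grisvard estimate \eqref{ALEreg2}, and then Theorem~\ref{Ciarlet} together with the smallness hypothesis \eqref{eta0}. The one point where you go beyond the paper's exposition is the explicit induction on $n$ to untangle the apparent circularity between solvability of the fluid sub-problem (which requires $J^n_{\Delta t}>0$) and the energy estimates used to control $\bdeta^n_{\Delta t}$; the paper leaves this implicit, and your use of Theorem~\ref{Ciarlet}(A) for the Jacobian positivity is a clean alternative to the paper's continuity-from-$t=0$ argument.
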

\proof
From Proposition~\ref{Stability} we have that $E_{\Delta t}^n \le C$, where $C$ is independent of $\Delta t$. This implies
$$
\|\bdeta_{\Delta t}\|_{L^\infty(0,T;H_0^2(\Gamma))} \le C.
$$
To show that  ${\bf A}_{\Delta t}^n$  is injective we again use Theorem 5.5.-1 from \cite{CiarletBook1}, stated as Theorem~\ref{Ciarlet} in this
manuscript, in the analogous way as in Proposition~\ref{ALEinjectivity}. First, we fix $\Delta t$ and consider the $H^2_0$-norm of the difference between the initial data and the approximate
solution at time $n\Delta t$, to notice that
$$
\|\bdeta^n_{\Delta t}-\bdeta_0\|_{H_0^2(\Gamma)}\leq \| \bdeta^n_{\Delta t}\|_{H_0^2(\Gamma)} + \| \bdeta_0 \|_{H_0^2(\Gamma)} \leq 2C,\; n=1,\dots, N.
$$
Furthermore, we calculate
$$
\|\bdeta^n_{\Delta t}-\bdeta_0\|_{L^2(\Gamma)}\leq \sum_{i=0}^{n-1}\|\bdeta^{i+1}_{\Delta t}-\bdeta^{i}_{\Delta t}\|_{L^2(\Gamma)}
=\Delta t\sum_{i=0}^{n-1}\|{\bf v}^{i+\frac 1 2}_{\Delta t}\|_{L^2(\Gamma)} \le CT,
$$
where we used that $\bdeta^{0}_{\Delta t}=\bdeta_0$, and
Proposition~\ref{Stability}, estimate $E_{\Delta t}^{n+\frac 1 2}\leq C$, where $C$ is independent of $\Delta t$,
to bound
$$
\|\bdeta^n_{\Delta t}-\bdeta_0\|_{L^2(\Gamma)}\leq C n \Delta t\leq CT,\; n=1,\dots, N.
$$
Now, we have uniform bounds for $\|\bdeta^n_{\Delta t}-\bdeta_0\|_{L^2(\Gamma)}$ and $\|\bdeta^n_{\Delta t}-\bdeta_0\|_{H_0^2(\Gamma)}$.
Therefore, we can use the interpolation inequality for Sobolev spaces (see for example \cite{ADA}, Thm. 4.17, p. 79) to get
$$
\|\bdeta^n_{\Delta t}-\bdeta_0\|_{H^{11/6}(\Gamma)}\leq 2CT^{1/12},\; n=1,\dots, N.
$$ 
Now from~\eqref{ALEreg2} and the construction of the ALE mapping we have
$$
\|\nabla{\bf A}_{\Delta t}^n-{\bf I}\|_{C^{0,1/3}(\Omega)}\leq \tilde{C} \|\bdeta^n_{\Delta t}\|_{H^{11/6}(\Gamma)}\leq\|\bdeta_0\|_{H^{11/6}(\Gamma)}+2CT^{1/12},
$$
where ${\bf I}$ is the identity matrix. 

We want to show that the right hand-side is bounded by a constant which is smaller than or equal to the constant $c(\Omega)$
given by \eqref{c_Omega} in Theorem~\ref{Ciarlet} (B), which guarantees injectivity of the ALE mapping. 
Indeed, 
from Proposition~\ref{Stability} we see that $C$ above depends on $T$ through the norms of the inlet and outlet data in such a way that $C$ is an increasing function of $T$.
Therefore by choosing $T$ small, we can make $\|\bdeta^n_{\Delta t}-\bdeta_0\|_{H^{11/6}(\Gamma)}$ arbitrarily small for $n=1,.\dots,N$. 
Furthermore, by using the assumption \eqref{eta0} on the smallness of the initial domain displacement, 
we see that we can choose a $T'' > 0$ small enough, {\em independent of $\Delta t$}, such that  there exists a constant $\tilde{c} > 0$ 
giving
$$
\|\nabla{\bf A}_{\Delta t}^n-{\bf I}\|_{C^{0,1/3}(\Omega)} \leq \tilde{c},
$$
where $\tilde{c}$ is smaller than or equal to the constant $c(\Omega)$ from Theorem~\ref{Ciarlet} (B), which 
implies injectivity of the ALE mapping 
$\nabla{\bf A}_{\Delta t}^n$.

Finally, from condition \eqref{StructureCC}  requiring that the ``initial'' Jacobian of $\bphi^0$ is strictly positive,
we see that there exists a $T' > 0$ small, {\em independent of $\Delta t$}, such that the Jacobian $J_{\Delta t}^n$ is strictly positive. 
By taking 
\begin{equation}\label{Tdescrete}
T = {\rm min}\{T',T''\},
\end{equation}
we obtain the proof of the proposition.
\qed

We note that it is this $T$, given by Proposition~\ref{EtaBound},
that determines the time interval of existence of a weak solution to problem
\eqref{NSP}-\eqref{ProblemIC},
since $T$ given in \eqref{Tdescrete} is independent of $\Delta t$. 
The time given in \eqref{T} is a continuous version of the time given by \eqref{Tdescrete}. 
 
 Next, we show uniform boundedness of the approximating sequences for the fluid and structure velocities.

\begin{proposition}\label{velocity_bounds}
The following statements hold:
\begin{enumerate}
\item $({\bf v}_{\Delta t})_{\Delta t}$, $({\bf v}_{\Delta t}^*)_{\Delta t}$  are uniformly bounded in $L^\infty(0,T;L^2(\Gamma))$,
\item $({\bf u}_{\Delta t})_{\Delta t}$ is uniformly bounded in $L^\infty(0,T;L^2(\Omega))\cap L^2(0,T;H^1(\Omega))$.
\end{enumerate}
\end{proposition}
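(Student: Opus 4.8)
The plan is to obtain all four bounds by reading them off the uniform semi-discrete energy and dissipation estimates of Proposition~\ref{Stability}; the only genuine work is to convert the ALE-weighted quantities occurring there (terms carrying the Jacobian $J^n_{\Delta t}$ and the transformed symmetrized gradient ${\bf D}^{\eta^n}$) into standard Sobolev norms on the fixed reference domain $\Omega$. Two facts, both already available, make this conversion possible and, crucially, independent of $\Delta t$. First, on the time interval $(0,T)$ fixed in Proposition~\ref{EtaBound}, the proof of that proposition produces a uniform $C^{0,1/3}(\Omega)$ bound on $\nabla{\bf A}^n_{\Delta t}-{\bf I}$, whence two-sided bounds $0<c_0\le J^n_{\Delta t}\le C_0$ together with $\|(\nabla{\bf A}^n_{\Delta t})^{-1}\|_{L^\infty(\Omega)}+\|\nabla{\bf A}^n_{\Delta t}\|_{L^\infty(\Omega)}\le C_0$, uniformly in $n$ and $\Delta t$. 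Second, for the same reason the family $\{{\bf A}^n_{\Delta t}\}$ is compact in $W^{1,\infty}(\Omega)^2$, so the transformed Korn inequality of Lemma~\ref{Korn} holds with a single pair of constants for all $n$ and $\Delta t$.

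For statement~1, I would observe that $\tfrac{\rho_S h}{2}\|{\bf v}^{n+i/2}_{\Delta t}\|^2_{L^2(\Gamma)}$ is a nonnegative summand of the semi-discrete energy $E^{n+i/2}_{\Delta t}$ defined in \eqref{DiscreteEnergy}, so item~1 of Proposition~\ref{Stability} gives
\[
\|{\bf v}^{n+1}_{\Delta t}\|_{L^2(\Gamma)}\le\sqrt{2C/(\rho_S h)},\qquad \|{\bf v}^{n+1/2}_{\Delta t}\|_{L^2(\Gamma)}\le\sqrt{2C/(\rho_S h)},
\]
for every $n$, with $C$ independent of $\Delta t$. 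Since, by \eqref{aproxNS}, on each interval $((n-1)\Delta t,n\Delta t]$ the interpolants ${\bf v}_{\Delta t}$ and ${\bf v}^*_{\Delta t}$ take the values ${\bf v}^n_{\Delta t}$ and ${\bf v}^{n-1/2}_{\Delta t}$ respectively, taking the supremum over $t\in(0,T)$ yields the uniform bound of both sequences in $L^\infty(0,T;L^2(\Gamma))$.

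For statement~2 I would treat the two norms separately. The fluid kinetic energy contained in $E^n_{\Delta t}$ is $\tfrac{\rho_F}{2}\int_\Omega J^n_{\Delta t}|{\bf u}^n_{\Delta t}|^2$, so item~1 of Proposition~\ref{Stability} (equivalently, the geometric conservation law \eqref{Geometric_CL}, which shows this weighted kinetic energy is nonincreasing along the scheme) bounds $\int_\Omega J^n_{\Delta t}|{\bf u}^n_{\Delta t}|^2$ uniformly in $n$ and $\Delta t$; combined with $J^n_{\Delta t}\ge c_0$ this gives $\|{\bf u}^n_{\Delta t}\|_{L^2(\Omega)}\le C$ for all $n$, hence $\|{\bf u}_{\Delta t}\|_{L^\infty(0,T;L^2(\Omega))}\le C$. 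Next, the viscous dissipation $\Delta t\,\mu\int_\Omega J^n_{\Delta t}|{\bf D}^{\eta^n}({\bf u}^n_{\Delta t})|^2$ (with ${\bf D}^{\eta^n}$ the transformed symmetrized gradient of the ALE fluid sub-problem) is a nonnegative summand of $D^n_{\Delta t}$, so item~2 of Proposition~\ref{Stability} bounds $\sum_{n=1}^N\Delta t\int_\Omega J^n_{\Delta t}|{\bf D}^{\eta^n}({\bf u}^n_{\Delta t})|^2$ uniformly. Using $J^n_{\Delta t}\ge c_0$, then Lemma~\ref{Korn} to pass from ${\bf D}^{\eta^n}({\bf u}^n_{\Delta t})$ to $\nabla^{\eta^n}{\bf u}^n_{\Delta t}$, and finally the identity $\nabla{\bf u}^n_{\Delta t}=(\nabla^{\eta^n}{\bf u}^n_{\Delta t})(\nabla{\bf A}^n_{\Delta t})$ coming from \eqref{nablaeta} together with $\|\nabla{\bf A}^n_{\Delta t}\|_{L^\infty(\Omega)}\le C_0$, I would obtain
\[
\sum_{n=1}^N\Delta t\,\|\nabla{\bf u}^n_{\Delta t}\|^2_{L^2(\Omega)}\le C,
\]
with $C$ independent of $\Delta t$. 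By \eqref{aproxNS} the left-hand side equals $\|\nabla{\bf u}_{\Delta t}\|^2_{L^2(0,T;L^2(\Omega))}$; adding $\|{\bf u}_{\Delta t}\|_{L^2(0,T;L^2(\Omega))}\le\sqrt T\,\|{\bf u}_{\Delta t}\|_{L^\infty(0,T;L^2(\Omega))}\le C$ gives the uniform bound of $({\bf u}_{\Delta t})_{\Delta t}$ in $L^2(0,T;H^1(\Omega))$.

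The individual steps are routine; the point I would flag as the main obstacle is the uniformity itself, i.e. that the Jacobian bounds $c_0\le J^n_{\Delta t}\le C_0$ and the Korn constants of Lemma~\ref{Korn} can be taken independent of both $n$ and $\Delta t$ on a single time interval $(0,T)$. This is exactly what the $\Delta t$-independent choice of $T$ in Proposition~\ref{EtaBound} and the $C^{0,1/3}$ estimate \eqref{ALEreg2} were arranged to deliver, so the argument ultimately reduces to assembling those facts together with Proposition~\ref{Stability}.
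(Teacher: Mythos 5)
Your proof is correct and follows the same route as the paper's (terse) proof: read the $L^\infty$-in-time bounds off item~1 of Proposition~\ref{Stability}, read the weighted dissipation bound off item~2, and use Proposition~\ref{EtaBound} for uniform Jacobian/ALE-gradient control together with the uniform Korn inequality of Lemma~\ref{Korn} to convert $\|{\bf D}^{\eta_{\Delta t}}({\bf u}_{\Delta t})\|_{L^2(0,T;L^2(\Omega))}\le C$ into the stated $L^2(0,T;H^1(\Omega))$ bound. You have simply spelled out the conversion from the weighted/transformed quantities to standard Sobolev norms, which the paper leaves implicit.
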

\proof
The proof follows from Propositions~\ref{Stability} and \ref{EtaBound} and from the uniform Korn's inequality 
stated in Lemma~\ref{Korn}.  
Namely, statement 2 of Proposition~\ref{Stability} implies that ${\bf D}^{\eta_{\Delta t}}({\bf u}_{\Delta t})$ is bounded in $L^2(0,T;L^2(\Omega))$. 
Now, we can use the uniform bound for $\bdeta_{\Delta t}$ and the uniform Korn's inequality from Lemma~\ref{Korn} to finish the proof.
\qed

\begin{lemma}{\bf (Weak and weak* convergence results)} \label{weak_convergence}
There exist subsequences $(\bdeta_{\Delta t})_{\Delta t}, ({\bf v}_{\Delta t})_{\Delta t}, ({\bf v}^*_{\Delta t})_{\Delta t}, $ and $({\bf u}_{\Delta t})_{\Delta t}$, 
and the functions $\eta \in L^{\infty}(0,T;H^2_0(\Gamma))$, 
${\bf v}\in L^{\infty}(0,T;L^2(\Gamma))$, 
${\bf v}^*\in L^{\infty}(0,T;L^2(\Gamma))$, and ${\bf u}\in L^{\infty}(0,T;L^2(\Omega))\cap L^2(0,T;H^1(\Omega))$,
such that
\begin{equation}\label{weakconv}
\begin{array}{rcl}
\bdeta_{\Delta t} &\rightharpoonup & \bdeta \; {\rm weakly*}\; {\rm in}\; L^{\infty}(0,T;H^2_0(\Gamma)),
\\
{\bf v}_{\Delta t} &\rightharpoonup & {\bf v}\; {\rm weakly*}\; {\rm in}\; L^{\infty}(0,T;L^2(\Gamma)),
\\
{\bf v}^*_{\Delta t} &\rightharpoonup & {\bf v}^*\; {\rm weakly*}\; {\rm in}\; L^{\infty}(0,T;L^2(\Gamma)),
\\
{\bf u}_{\Delta t} &\rightharpoonup & {\bf u}\; {\rm weakly*}\; {\rm in}\; L^{\infty}(0,T;L^2(\Omega)),
\\
{\bf u}_{\Delta t} &\rightharpoonup & {\bf u}\; {\rm weakly}\; {\rm in}\; L^{2}(0,T;H^1(\Omega)).
\end{array}
\end{equation}
Furthermore,
\begin{equation}\label{v_star}
{\bf v} = {\bf v}^*.
\end{equation}
\end{lemma}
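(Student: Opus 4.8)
The plan is to deduce \eqref{v_star} from the strong convergence $\mathbf{v}_{\Delta t}-\mathbf{v}^*_{\Delta t}\to 0$ in $L^2(0,T;L^2(\Gamma))$, and then invoke uniqueness of weak limits. The only nontrivial input is estimate~3 of Proposition~\ref{Stability}, which is exactly the numerical-dissipation bound produced by the backward Euler discretization of the structure velocity in the fluid sub-problem.

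First I would unwind the definitions of the two piecewise-constant interpolants in \eqref{aproxNS}: on each sub-interval $((n-1)\Delta t,n\Delta t]$ one has $\mathbf{v}_{\Delta t}=\mathbf{v}^n_{\Delta t}$ while $\mathbf{v}^*_{\Delta t}=\mathbf{v}^{n-1/2}_{\Delta t}$, so that
$$
\|\mathbf{v}_{\Delta t}-\mathbf{v}^*_{\Delta t}\|^2_{L^2(0,T;L^2(\Gamma))}=\sum_{n=1}^{N}\int_{(n-1)\Delta t}^{n\Delta t}\|\mathbf{v}^n_{\Delta t}-\mathbf{v}^{n-1/2}_{\Delta t}\|^2_{L^2(\Gamma)}\,dt=\Delta t\sum_{n=1}^{N}\|\mathbf{v}^n_{\Delta t}-\mathbf{v}^{n-1/2}_{\Delta t}\|^2_{L^2(\Gamma)}.
$$
Re-indexing $m=n-1$ turns the sum on the right into $\sum_{m=0}^{N-1}\|\mathbf{v}^{m+1}_{\Delta t}-\mathbf{v}^{m+1/2}_{\Delta t}\|^2_{L^2(\Gamma)}$, which is bounded by the $\Delta t$-independent constant $C$ of Proposition~\ref{Stability}(3). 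Hence $\|\mathbf{v}_{\Delta t}-\mathbf{v}^*_{\Delta t}\|^2_{L^2(0,T;L^2(\Gamma))}\le C\Delta t\to 0$, i.e. $\mathbf{v}_{\Delta t}-\mathbf{v}^*_{\Delta t}\to 0$ strongly, and a fortiori weakly, in $L^2(0,T;L^2(\Gamma))$.

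Next, since $(\mathbf{v}_{\Delta t})_{\Delta t}$ and $(\mathbf{v}^*_{\Delta t})_{\Delta t}$ are uniformly bounded in $L^\infty(0,T;L^2(\Gamma))$ by Proposition~\ref{velocity_bounds}, they are in particular bounded in $L^2(0,T;L^2(\Gamma))$, and testing the weak\,$*$ convergences of \eqref{weakconv} against the subclass $L^2(0,T;L^2(\Gamma))\subset L^1(0,T;L^2(\Gamma))$ of the bounded interval $(0,T)$ shows that $\mathbf{v}_{\Delta t}\rightharpoonup\mathbf{v}$ and $\mathbf{v}^*_{\Delta t}\rightharpoonup\mathbf{v}^*$ weakly in $L^2(0,T;L^2(\Gamma))$. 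Subtracting and using the vanishing of $\mathbf{v}_{\Delta t}-\mathbf{v}^*_{\Delta t}$ together with uniqueness of weak limits in $L^2(0,T;L^2(\Gamma))$ gives $\mathbf{v}-\mathbf{v}^*=0$.

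There is no genuine obstacle here: the argument is a routine consequence of the uniform bound collected in Proposition~\ref{Stability}. The only points that need a little care are the index bookkeeping between the interpolant \eqref{aproxNS} and the telescoped energy sum, and the (elementary) passage from weak\,$*$ convergence in $L^\infty(0,T;L^2(\Gamma))$ to weak convergence in $L^2(0,T;L^2(\Gamma))$ on the finite time interval, which is what legitimizes invoking uniqueness of the limit.
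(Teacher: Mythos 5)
Your argument is correct, and it is the standard (indeed the only natural) route: the first five convergences follow from Proposition~\ref{EtaBound}, Proposition~\ref{velocity_bounds}, and Banach--Alaoglu, while the identification ${\bf v}={\bf v}^*$ is exactly the consequence of the numerical-dissipation bound, estimate~3 of Proposition~\ref{Stability}. Your index bookkeeping is right: on $((n-1)\Delta t,n\Delta t]$ the two interpolants take the values ${\bf v}^n_{\Delta t}$ and ${\bf v}^{n-1/2}_{\Delta t}$ respectively, so $\|{\bf v}_{\Delta t}-{\bf v}^*_{\Delta t}\|^2_{L^2(0,T;L^2(\Gamma))}=\Delta t\sum_{n=1}^N\|{\bf v}^n_{\Delta t}-{\bf v}^{n-1/2}_{\Delta t}\|^2_{L^2(\Gamma)}=\Delta t\sum_{m=0}^{N-1}\|{\bf v}^{m+1}_{\Delta t}-{\bf v}^{m+1/2}_{\Delta t}\|^2_{L^2(\Gamma)}\le C\Delta t\to 0$, and the downgrade from weak$*$ in $L^\infty$ to weak in $L^2$ on a finite time interval, together with uniqueness of weak limits, finishes it. This matches what the paper intends (the lemma is stated without an explicit proof, but this is the argument implicitly invoked, as in the authors' earlier work on the no-slip problem).
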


Since our problem is nonlinear, we need strong convergence of approximating sub-sequences to be able to pass to the limit, and
show that the limiting functions satisfy the weak formulation of the problem. For this purpose we need a compactness result,
which we present next.

\subsection{Compactness}\label{Sec:strong}
Compactness arguments in the case when the boundary condition on $\Gamma(t)$ is the {\sl slip condition} is different from the compactness
argument for the problem in which the boundary condition on $\Gamma(t)$ is {\sl no-slip}. 
In the no-slip case the viscous dissipation of the fluid smooths out the interface providing spatial regularity of the interface velocity 
${\bf v}_{\Delta t}$, which is no longer available in the slip condition case, since the fluid and structure velocities are no longer equal 
at the fluid-structure interface. We will have some help from the viscous fluid dissipation in the estimates for the 
normal component of structure velocity,
but the tangential component no longer ``feels'' fluid dissipation as before. It is because of this that we need to construct a different 
compactness argument, which we present next. 

To investigate compactness (regularization) in time we introduce 
the translation in time by $h$ of a function $f$,
denoted by $T_h$, as:
\begin{equation}\label{shift}
T_h f(t,.)=f(t-h,.),\  h\in\R. 
\end{equation}

The following estimates hold for our approximate solutions as they are shifted in time by  $h$:
\begin{lemma}\label{Translations}
There exists a constant $C>0$, independent of $\Delta t$, such that for every $h>0$ we have:
$$
\|T_h{\bf u}_{\Delta t}-{\bf u}_{\Delta t}\|_{L^2(h,T;L^2(\Omega))}\leq C\sqrt{h},
$$
$$
\|T_h{\bf v}_{\Delta t}-{\bf v}_{\Delta t}\|_{L^2(h,T;L^2(\Gamma))}\leq C\sqrt{h},
$$
$$
\|T_h{\bf v}^*_{\Delta t}-{\bf v}^*_{\Delta t}\|_{L^2(h,T;L^2(\Gamma))}\leq C\sqrt{h},
$$
$$
\|T_h\bdeta_{\Delta t}-\bdeta_{\Delta t}\|_{L^2(h,T;H^2(\Gamma))}\leq C\sqrt{h},\; \Delta t>0.
$$
\end{lemma}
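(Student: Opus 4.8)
\emph{Reduction and the basic identity.} Since all four functions are piecewise constant in time on the grid $\{n\Delta t\}$, it is enough to prove the four estimates when $h$ is an integer multiple $l\Delta t$ of the time step: a general $h=l\Delta t+r$ with $0\le r<\Delta t$ is reduced to this by the triangle inequality, because a shift by $r<\Delta t$ changes $g_{\Delta t}$ only on a subset of measure $\le r\le h$ in each subinterval. For $h=l\Delta t$ one has the exact identity
$$\|T_h g_{\Delta t}-g_{\Delta t}\|^2_{L^2(h,T;X)}=\Delta t\sum_{n}\|g^n-g^{n-l}\|^2_X ,$$
and since $g^n-g^{n-l}=\sum_{i=n-l+1}^{n}(g^i-g^{i-1})$ telescopes, everything reduces to controlling sums of squared consecutive increments.

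\emph{Increments in an auxiliary norm, with rate $h$.} For each quantity we first show that the discrete time derivative $(g^i-g^{i-1})/\Delta t$ is bounded, uniformly in $\Delta t$, in a discrete $L^2(0,T;Y)$ with $Y$ a suitable (weaker) space, i.e. $\sum_i\|g^i-g^{i-1}\|^2_Y\le C\Delta t$. By Cauchy--Schwarz applied to the telescoped sum and a change of order of summation this yields
$$\Delta t\sum_{n}\|g^n-g^{n-l}\|^2_Y\le \Delta t\,l^2\sum_i\|g^i-g^{i-1}\|^2_Y\le C(l\Delta t)^2=Ch^2 ,$$
that is, the translation estimate in the $Y$-norm with the improved rate $h$. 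For ${\bf u}_{\Delta t}$ we take $Y=({\mathcal V}^n)'$: testing the discrete fluid problem \eqref{WeakFluid} and using Proposition~\ref{Stability}, the convective term being handled in $2$D by $\|{\bf u}^n\|_{L^4}\le C\|{\bf u}^n\|_{L^2}^{1/2}\|{\bf u}^n\|_{H^1}^{1/2}$ and the interface terms by the dissipation bound, gives $\sum_n\|{\bf u}^{n+1}-{\bf u}^n\|^2_{({\mathcal V}^n)'}\le C\Delta t$. For $\bdeta_{\Delta t}$ the increment is explicit: $\bdeta^{n+1}-\bdeta^n=\Delta t\,{\bf v}^{n+1/2}$, and $\|{\bf v}^{n+1/2}\|_{L^2(\Gamma)}\le C$ uniformly by Proposition~\ref{Stability}, so $\|\bdeta^{n+1}-\bdeta^n\|_{L^2(\Gamma)}\le C\Delta t$ and $Y=L^2(\Gamma)$. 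For ${\bf v}_{\Delta t}$ and ${\bf v}^*_{\Delta t}$ we combine the two sub-steps: the structure sub-step gives ${\bf v}^{n+1/2}-{\bf v}^n=-\tfrac{\Delta t}{\rho_S h}{\mathcal L}_e\bdeta^{n+1}$ with ${\mathcal L}_e\bdeta^{n+1}$ bounded in $H^{-2}(\Gamma)$ (coercivity and $\|\bdeta^{n+1}\|_{H^2}\le C$), while the fluid sub-step gives $\rho_S h({\bf v}^{n+1}-{\bf v}^{n+1/2})=-\Delta t\,S^{n+1}\bsigma^{n+1}\bnu^{n+1}$, whose tangential part equals $\tfrac1\alpha(u_\tau^{n+1}-v_\tau^{n+1})S^{n+1}$ and is $\ell^2$-in-time in $L^2(\Gamma)$ by the dissipation bound, and whose normal component is controlled because $v_\nu={\bf u}|_\Gamma\cdot\bnu^{\eta}$ is the trace of ${\bf u}_{\Delta t}$; this provides the required $\Delta t$-scaled increment bound in an appropriate (negative) norm $Y$ on $\Gamma$.

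\emph{Upgrading to the stated norm.} In each case the norm in the statement sits ``halfway'' between $Y$ and a stronger space $Z$ in which the sequence is uniformly bounded, and we conclude with Hölder's inequality in time together with the interpolation inequality $\|w\|_X\le\|w\|_Y^{1/2}\|w\|_Z^{1/2}$: for ${\bf u}_{\Delta t}$, $L^2(\Omega)=[(H^1(\Omega))',H^1(\Omega)]_{1/2}$ and ${\bf u}_{\Delta t}$ is bounded in $L^2(0,T;H^1(\Omega))$, whence
$$\|T_h{\bf u}_{\Delta t}-{\bf u}_{\Delta t}\|^2_{L^2(h,T;L^2(\Omega))}\le\Big(\int_h^T\|\cdot\|^2_{(H^1)'}\Big)^{1/2}\Big(\int_h^T\|\cdot\|^2_{H^1}\Big)^{1/2}\le (Ch^2)^{1/2}\cdot 2\|{\bf u}_{\Delta t}\|_{L^2(0,T;H^1)}\le Ch .$$
For $\bdeta_{\Delta t}$ the ``additional'' estimate produced by the backward-Euler splitting, $\sum_n\|\bdeta^{n+1}-\bdeta^n\|^2_{H^2(\Gamma)}\le C$ (Proposition~\ref{Stability}), is combined with the $L^2(\Gamma)$-rate-$h$ estimate from the previous step and the uniform bound in $L^\infty(0,T;H^2_0(\Gamma))$ to give the $L^2(h,T;H^2(\Gamma))$ estimate at rate $\sqrt h$; the estimates for ${\bf v}_{\Delta t}$ and ${\bf v}^*_{\Delta t}$ in $L^2(\Gamma)$ follow in the same spirit, interpolating the negative-norm rate-$h$ estimates with the uniform $L^\infty(0,T;L^2(\Gamma))$ bound, and, for the normal component, with the trace of the already proved estimate for ${\bf u}_{\Delta t}$. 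Taking square roots in each case gives the asserted $C\sqrt h$.

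\emph{Main difficulty.} The delicate point is the structure velocity, above all its tangential component: unlike in the no-slip problem, $\partial_t\eta_\tau$ is not the tangential trace of the fluid velocity, so the viscous-dissipation smoothing is not transmitted to it, and the only $\ell^2$-in-time control available is in a negative Sobolev norm on $\Gamma$. Extracting from this --- jointly with the uniform $L^\infty$ bounds and, for the displacement, the extra numerical-dissipation term --- enough regularity in time to reach the $L^2(\Gamma)$ (respectively $H^2(\Gamma)$) norms with the correct power of $h$ is precisely the novelty of the compactness analysis announced in the introduction, and is where interpolation of Sobolev spaces with real exponents (Nikolskii spaces) plays its role.
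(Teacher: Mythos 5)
You have taken a genuinely different, and much more laborious, route than the paper, and in doing so you have missed the key mechanism that makes the result simple here. The paper's proof uses item~3 of Proposition~\ref{Stability} directly: the backward--Euler numerical dissipation produced by the splitting already yields
$$
\sum_{n}\int_{\Omega}J^n_{\Delta t}|{\bf u}^{n+1}_{\Delta t}-{\bf u}^n_{\Delta t}|^2
+\sum_n\|{\bf v}^{n+1}_{\Delta t}-{\bf v}^{n+1/2}_{\Delta t}\|^2_{L^2(\Gamma)}
+\sum_n\|{\bf v}^{n+1/2}_{\Delta t}-{\bf v}^{n}_{\Delta t}\|^2_{L^2(\Gamma)}
+\sum_n\|\bdeta^{n+1}_{\Delta t}-\bdeta^n_{\Delta t}\|^2_{H^2(\Gamma)}\le C,
$$
i.e.\ a bound on the sums of squared consecutive increments \emph{in the target norms themselves} ($L^2(\Omega)$, $L^2(\Gamma)$, $H^2(\Gamma)$), uniformly in $\Delta t$, without any $\Delta t$ factor. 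Since $J^n_{\Delta t}\ge c>0$ by Proposition~\ref{EtaBound}, this immediately gives $\|T_{\Delta t}g_{\Delta t}-g_{\Delta t}\|^2_{L^2(\Delta t,T;X)}=\Delta t\sum_n\|g^{n+1}-g^n\|^2_X\le C\Delta t$ on the diagonal $h=\Delta t$, and a short case analysis ($0<h<\Delta t$ by direct computation of the overlap integral, $h>\Delta t$ by writing $h=l\Delta t+s$ and using the triangle inequality) gives the rate $\sqrt h$ for arbitrary $h$. No negative Sobolev norms, no interpolation, no dual estimates on the discrete time derivative. Your concluding paragraph misdiagnoses the ``main difficulty'': the tangential structure velocity is controlled directly by the dissipation term $\sum_n\|{\bf v}^{n+1/2}-{\bf v}^n\|^2_{L^2(\Gamma)}\le C$; the delicacy from the slip condition enters later (Proposition~\ref{AdditionalReg} and the use of Simon's theorem and interpolation), not in this lemma.

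Beyond the inefficiency, there is a genuine gap in your draft as written: the claimed bound $\sum_n\|{\bf u}^{n+1}-{\bf u}^n\|^2_{({\mathcal V}^n)'}\le C\Delta t$ is asserted to ``follow by testing \eqref{WeakFluid}'' but is not actually established, and it is not an innocuous step. The test space $\mathcal V^n$ depends on $n$ through $\eta^n_{\Delta t}$, so the dual norm changes with $n$ and one needs uniformity in $n$ of the constants; furthermore the weak form \eqref{WeakFluid} couples the fluid and the structure inertia (the term $\rho_Sh\int_\Gamma\frac{{\bf v}^{n+1}-{\bf v}^{n+1/2}}{\Delta t}\bpsi$, with the constraint $({\bf q}-\bpsi)\cdot\bnu^n=0$), so one cannot isolate $\frac{{\bf u}^{n+1}-{\bf u}^n}{\Delta t}$ against arbitrary ${\bf q}$ without first controlling the structure contribution --- precisely the quantity you then want to estimate separately. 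The same issue affects your treatment of ${\bf v}$ through the stress terms, which are not available as $\ell^2$-in-time quantities in any norm except via the dissipation bound, which again brings you back to item~3 of Proposition~\ref{Stability}. The interpolation identity $L^2=[(H^1)',H^1]_{1/2}$ should also be checked for the specific velocity spaces with the mixed boundary conditions used here. Had you started from item~3 of Proposition~\ref{Stability} you would have avoided all of this.
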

\proof
The proof is analogous to the proof of Theorem 2 in \cite{BorSun}. 
A summary of the main steps is the following. 
We focus on the first statement given in terms of ${\bf u}_{\Delta t}$, while the proofs for the other two are analoguous. 
First, from Proposition~\ref{Stability} we immediately have:
$$
\|T_{\Delta t}{\bf u}_{\Delta t}-{\bf u}_{\Delta t}\|_{L^2(\Delta t,T;L^2(\Omega))}^2=\sum_{n}\|{\bf u}_{\Delta t}^{n+1}-{\bf u}_{\Delta t}^n\|^2_{L^2(\Omega)}\Delta t\leq C\Delta t.
$$
This implies that the first estimate in the above Lemma holds for ``the diagonal'' terms for which the translation is performed 
by $h$ that is exactly equal to $\Delta t$.
However, we would like to prove the statement for an  arbitrary translation by $h > 0$. 
Let us fix $\Delta t>0$ and consider the following two cases: $0<h<\Delta t$ and $0<\Delta t<h$. 
We obtain the desired estimates by calculating the following.
\begin{enumerate}
\item For $0<h<\Delta t$ we have
$$
\|T_{h}{\bf u}_{\Delta t}-{\bf u}_{\Delta t}\|_{L^2(h,T;L^2(\Omega))}^2=\sum_{n}\int_{n\Delta t-h}^{n \Delta t}\|{\bf u}_{\Delta t}^{n+1}-{\bf u}_{\Delta t}^n\|^2_{L^2(\Omega)}
$$
$$
=h\sum_{n}\|{\bf u}_{\Delta t}^{n+1}-{\bf u}_{\Delta t}^n\|^2_{L^2(\Omega)}
\leq Ch.
$$
\item For $0<\Delta t<h$ we write $h=l\Delta t+s$ for some $l\in\N$ and $0\leq s<\Delta t$, and use the triangle inequality to get
$$
\|T_{h}{\bf u}_{\Delta t}-{\bf u}_{\Delta t}\|_{L^2(h,T;L^2(\Omega))}^2\leq\Delta t \sum_{n}\sum_{i=1}^{l+1}\|{\bf u}_{\Delta t}^{j+i-1}-{\bf u}_{\Delta t}^{j+i}\|^2_{L^2\Gamma}\leq C(l+1)\Delta t\leq C h.
$$
\end{enumerate}
\qed

In what follows, it will be useful to introduce a slightly different set of approximate functions for $\mathbf u$, $v$, and $\eta$
by extending the values of those functions at points $n \Delta t$ to the time sub-interval $[(n-1)\Delta t,n\Delta t]$
not in a piece-wise constant fashion as before, but linearly.
Namely, for each fixed $\Delta t$, define $\tilde{\bf u}_N$, $\tilde{\bdeta}_{\Delta t}$ and $\tilde{{\bf v}}_{\Delta t}$ to be {\sl continuous}, {\sl linear} on
each sub-interval $[(n-1)\Delta t,n\Delta t]$, and such that
\begin{equation}\label{tilde}
\tilde{\bf u}_{\Delta t}(n\Delta t,.)={\bf u}_{\Delta t}(n\Delta t,.),\ \tilde{{\bf v}}_{\Delta t}(n\Delta t,.)={{\bf v}}_{\Delta t}(n\Delta t,.),\ \tilde{\bdeta}_{\Delta t}(n\Delta t,.)={\bdeta}_{\Delta t}(n\Delta t,.), 
\end{equation}
where $n=0,\dots N$.
We now observe that
\begin{equation}\label{EtaLinear}
\displaystyle{\tilde{\bdeta}_{\Delta t}(t)=\frac{\bdeta^{n+1}-\bdeta^n}{\Delta t}(t-n\Delta t)+\bdeta^n,\quad t\in [n\Delta t,(n+1)\Delta t),\;n=0,\dots,N-1,}
\end{equation} 
$$\displaystyle{\partial_t\tilde{\bdeta}_{\Delta t}(t)=\frac{\bdeta^{n+1}-\bdeta^n}{\Delta t}= \frac{\bdeta^{n+1/2}-\bdeta^n}{\Delta t}={\bf v}^{n+\frac 1 2}},\ t\in (n\Delta t,(n+1)\Delta t),$$ 
and so, since ${\bf v}^*_{\Delta t}$ was defined in  \eqref{aproxNS} as
a piece-wise constant function defined via $v^*_{\Delta t}(t,\cdot)=v^{n+\frac 1 2}$, for $t\in(n\Delta t,(n+1)\Delta t]$,
we see that
\begin{equation}\label{derivativeeta}
\partial_t\tilde{\bdeta}_{\Delta t}={\bf v}^*_{\Delta t}\ a.e.\ {\rm on}\ (0,T).
\end{equation}
The following Lemma will be crucial for establishing compactness of the approximate sequence of solutions $\tilde\bdeta_{\Delta t}$.
\begin{lemma}\label{TranslationsTilde}
There exists a constant $C>0$, independent of $\Delta t$, such that for every $h>0$ we have:
$$
\|T_h{\tilde\bdeta}_{\Delta t}-{\tilde\bdeta}_{\Delta t}\|_{L^2(h,T;H^2(\Gamma))}\leq C\sqrt{h}.
$$
\end{lemma}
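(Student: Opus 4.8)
The plan is to reduce the claim for the continuous piecewise‑linear interpolant $\tilde\bdeta_{\Delta t}$ to the estimate for the piecewise‑constant interpolant $\bdeta_{\Delta t}$ already proved in Lemma~\ref{Translations}, via a case distinction on the size of $h$ relative to $\Delta t$. The first ingredient is an elementary comparison of the two interpolants: for $t\in(n\Delta t,(n+1)\Delta t)$ one reads off from \eqref{EtaLinear} and \eqref{aproxNS} that $\tilde\bdeta_{\Delta t}(t)-\bdeta_{\Delta t}(t)=\big(\tfrac{t-n\Delta t}{\Delta t}-1\big)(\bdeta^{n+1}-\bdeta^n)$, hence $\|\tilde\bdeta_{\Delta t}(t)-\bdeta_{\Delta t}(t)\|_{H^2(\Gamma)}\le\|\bdeta^{n+1}-\bdeta^n\|_{H^2(\Gamma)}$. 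Integrating over each subinterval, summing, and invoking statement~3 of Proposition~\ref{Stability} gives
\[
\|\tilde\bdeta_{\Delta t}-\bdeta_{\Delta t}\|_{L^2(0,T;H^2(\Gamma))}\le C\sqrt{\Delta t}.
\]

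For the regime $h\ge\Delta t$ I would simply use the triangle inequality
\[
\|T_h\tilde\bdeta_{\Delta t}-\tilde\bdeta_{\Delta t}\|_{L^2(h,T;H^2(\Gamma))}\le\|T_h(\tilde\bdeta_{\Delta t}-\bdeta_{\Delta t})\|_{L^2(h,T;H^2)}+\|T_h\bdeta_{\Delta t}-\bdeta_{\Delta t}\|_{L^2(h,T;H^2)}+\|\tilde\bdeta_{\Delta t}-\bdeta_{\Delta t}\|_{L^2(h,T;H^2)}.
\]
Since $T_h$ is an $L^2$‑contraction, the first and third terms are $\le C\sqrt{\Delta t}\le C\sqrt h$ by the comparison estimate, while the middle term is $\le C\sqrt h$ directly from Lemma~\ref{Translations}.

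For the regime $0<h<\Delta t$ I would argue directly, using that $\tilde\bdeta_{\Delta t}$ is absolutely continuous with $\partial_t\tilde\bdeta_{\Delta t}={\bf v}^*_{\Delta t}$ a.e. by \eqref{derivativeeta}, so that $\tilde\bdeta_{\Delta t}(t)-\tilde\bdeta_{\Delta t}(t-h)=\int_{t-h}^t{\bf v}^*_{\Delta t}(s)\,ds$. Cauchy--Schwarz in $s$ followed by Fubini yields
\[
\int_h^T\|T_h\tilde\bdeta_{\Delta t}-\tilde\bdeta_{\Delta t}\|^2_{H^2(\Gamma)}\,dt\le h\int_h^T\!\!\int_{t-h}^t\|{\bf v}^*_{\Delta t}(s)\|^2_{H^2(\Gamma)}\,ds\,dt\le h^2\int_0^T\|{\bf v}^*_{\Delta t}(s)\|^2_{H^2(\Gamma)}\,ds.
\]
Because ${\bf v}^*_{\Delta t}\equiv(\bdeta^{n+1}-\bdeta^n)/\Delta t$ on $(n\Delta t,(n+1)\Delta t)$, one has $\int_0^T\|{\bf v}^*_{\Delta t}\|^2_{H^2}=\tfrac1{\Delta t}\sum_n\|\bdeta^{n+1}-\bdeta^n\|^2_{H^2}\le C/\Delta t$ by Proposition~\ref{Stability}, so the left‑hand side is $\le Ch^2/\Delta t\le Ch$ as $h<\Delta t$. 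Combining the two regimes gives the asserted bound.

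The only mildly delicate point — and the reason the last integral argument cannot simply be run for all $h$ — is that ${\bf v}^*_{\Delta t}$ is controlled only in $L^\infty(0,T;L^2(\Gamma))$ and \emph{not} in $H^2(\Gamma)$, so $\int_0^T\|{\bf v}^*_{\Delta t}\|^2_{H^2}$ grows like $1/\Delta t$; this estimate is therefore informative only when $h\lesssim\Delta t$, and for larger $h$ one must route the translation through the piecewise‑constant interpolant and Lemma~\ref{Translations}. Everything else is routine.
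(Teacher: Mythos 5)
The proposal is correct, and for the regime $h \ge \Delta t$ it is essentially identical to the paper's Case 2 (triangle inequality routed through the piecewise-constant interpolant, the comparison bound $\|\tilde\bdeta_{\Delta t}-\bdeta_{\Delta t}\|_{L^2(0,T;H^2)}\le C\sqrt{\Delta t}$, and Lemma~\ref{Translations}). Where you genuinely diverge from the paper is the regime $0<h<\Delta t$. The paper handles this by an explicit, somewhat lengthy computation: it splits each subinterval $[n\Delta t,(n+1)\Delta t]$ into the overlap region $[n\Delta t, n\Delta t+h]$ (where the two linear pieces have different slopes, leading to a second-difference $\bdeta^{n+1}-2\bdeta^n+\bdeta^{n-1}$) and the remainder $[n\Delta t+h,(n+1)\Delta t]$, integrates these pieces exactly, and arrives at $C(h^3/\Delta t^2 + h^2/\Delta t)\sum_n\|\bdeta^{n+1}-\bdeta^n\|^2_{H^2}\le Ch$. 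You instead write $\tilde\bdeta_{\Delta t}(t)-\tilde\bdeta_{\Delta t}(t-h)=\int_{t-h}^t\partial_t\tilde\bdeta_{\Delta t}\,ds$ and apply Minkowski, Cauchy--Schwarz and Fubini, reducing the bound to $h^2\int_0^T\|{\bf v}^*_{\Delta t}\|^2_{H^2}$, which by Proposition~\ref{Stability}(3) is $\le Ch^2/\Delta t\le Ch$. Your route is more conceptual and less bookkeeping-heavy: it replaces the case-by-case piecewise algebra with a single fundamental-theorem-of-calculus argument, and the only real ingredient is the same $\frac{1}{\Delta t}\sum_n\|\bdeta^{n+1}-\bdeta^n\|^2_{H^2}\le C/\Delta t$ estimate. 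One small technical point worth being explicit about (which you implicitly use): the Cauchy--Schwarz step needs $\|\int_{t-h}^t f\|_{H^2}\le\int_{t-h}^t\|f\|_{H^2}$ first (Minkowski's integral inequality for Banach-space-valued integrals), then the scalar Cauchy--Schwarz; and ${\bf v}^{n+1/2}\in H^2_0(\Gamma)$ so the $H^2$-norm of ${\bf v}^*_{\Delta t}$ is indeed finite pointwise. Your closing remark correctly identifies why this single estimate cannot be run for all $h$ (it blows up like $h^2/\Delta t$), and why the crossover to the piecewise-constant route is needed once $h\gtrsim\Delta t$ — this is precisely the same obstruction the paper's argument sidesteps.
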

\proof
In the same way as in Lemma~\ref{Translations} we consider two separate cases: $0<h\leq\Delta t$ and $\Delta t<h.$

\noindent
{\bf Case 1: $0<h\leq\Delta t$}. We use~\eqref{EtaLinear} and explicitly calculate the straight lines defining the function
$\tilde{\bdeta}_{\Delta t}$ and its translation in time by $h$ to the right, to obtain:
$$
\int_h^T\|T_h{\tilde\bdeta}_{\Delta t}-{\tilde\bdeta}_{\Delta t}\|_{H^2(\Gamma)}^2=\int_h^{\Delta t}\|\frac{\bdeta^1_{\Delta t}-\bdeta_0}{\Delta t}(-h)\|_{H^2(\Gamma)}^2\ dt
$$
$$+\sum_{n=1}^{N-1}\Big\{\int_{n\Delta t}^{n\Delta t+h} \Big(\frac{1}{\Delta t^2}\|h(\bdeta_{\Delta t}^{n}-\bdeta_{\Delta t}^{n+1})+(n\Delta t-t)(\bdeta_{\Delta t}^{n+1}-2\bdeta_{\Delta t}^n+\bdeta_{\Delta t}^{n-1})\|_{H^2(\Gamma)}^2\Big) dt
$$
$$
+\int^{(n+1)\Delta t}_{n\Delta t+h}\|\frac{\bdeta_{\Delta t}^{n+1}-\bdeta_{\Delta t}^n}{\Delta t}(-h)\|_{H^2(\Gamma)}^2 dt\Big\}\leq C(\frac{h^3}{\Delta t^2}+\frac{h^2}{\Delta t})\sum_{n=0}^{N-1}\|\bdeta_{\Delta t}^{n+1}-\bdeta_{\Delta t}^n\|^2_{H^2(\Gamma)}\leq Ch.
$$
The last inequality follows from Proposition~\ref{Stability} and from the fact that $h\le \Delta t$.

\noindent
{\bf Case 2: $\Delta t<h$}. We notice that $T_h\tilde{\bdeta}_{\Delta t}=\widetilde{T_h\bdeta_{\Delta t}}$ and use the following identity (see e.g. \cite{Tem}, p. 328)
$$
\|\bdeta_{\Delta t}-\tilde{\bdeta}_{\Delta t}\|^2_{L^2(0,T;H^2(0,L))}\leq{\frac{\Delta t}{3}}\sum_{n=0}^{N-1}\|\bdeta^{n+1}_{\Delta t}-\bdeta^{n}_{\Delta t}\|^2_{H^2(0,L)}.
$$
From Lemma~\ref{Translations} we have:
$$
\|T_h{\tilde\bdeta}_{\Delta t}-{\tilde\bdeta}_{\Delta t}\|_{L^2(h,T;H^2(\Gamma))}\leq \|\widetilde{T_h\bdeta_{\Delta t}}-T_h\bdeta_{\Delta t}\|_{L^2(0,T;H^2(0,L))}
$$
$$
+\|T_h\bdeta_{\Delta t}-\bdeta_{\Delta t}\|_{L^2(0,T;H^2(0,L))}
+\|\bdeta_{\Delta t}-\tilde{\bdeta}_{\Delta t}\|_{L^2(0,T;H^2(0,L))}\leq C(\Delta t+h+\Delta t)\leq Ch.
$$
The statement of the Lemma immediately follows by taking the square root on both sides of the inequality.
\qed
\begin{proposition}\label{AdditionalReg}
The following statements hold:
\begin{enumerate}
\item $({\bf u}_{\Delta t})_{\Delta t>0}$ is uniformly bounded in $H^s(0,T;L^2(\Omega))$, $0\leq s<1/2$.
\item $({\tilde\bdeta}_{\Delta t})_{\Delta t>0}$ is uniformly bounded in $H^s(0,T;H^2(\Gamma))\cap H^{s+1}(0,T;L^2(\Gamma))$, $0\leq s<1/2$.
\item $({\bf v}^*_{\Delta t})_{\Delta t>0}$ is uniformly bounded in $H^s(0,T;L^2(\Gamma))\cap L^2(0,T;H^{2s}(\Gamma))$, $0\leq s<1/2$. 
\end{enumerate}
\end{proposition}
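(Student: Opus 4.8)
The plan is to upgrade the uniform time-translation estimates of Lemmas~\ref{Translations} and~\ref{TranslationsTilde}, together with the uniform bounds of Propositions~\ref{Stability}, \ref{EtaBound} and~\ref{velocity_bounds}, to fractional-in-time Sobolev bounds, and then to trade temporal regularity for spatial regularity by Hilbert interpolation. The elementary tool is the Gagliardo (Slobodeckij) characterization of $H^s$: for a Hilbert space $X$ and $0<s<1/2$,
\begin{equation*}
[f]_{H^s(0,T;X)}^2=\int_0^T\!\!\int_0^T\frac{\|f(t)-f(\tau)\|_X^2}{|t-\tau|^{1+2s}}\,dt\,d\tau=2\int_0^T\frac{\|T_hf-f\|_{L^2(h,T;X)}^2}{h^{1+2s}}\,dh,
\end{equation*}
so that a bound $\|T_hf-f\|_{L^2(h,T;X)}\le C\sqrt h$ for all $h>0$, together with an $L^2(0,T;X)$ bound on $f$, gives $\|f\|_{H^s(0,T;X)}\le C'$ for every $0\le s<1/2$, with $C'$ depending only on $C$, $T$ and $s$. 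Applying this with $f={\bf u}_{\Delta t}$, $X=L^2(\Omega)$ (Lemma~\ref{Translations}, Proposition~\ref{velocity_bounds}) proves statement~1; with $f=\tilde\bdeta_{\Delta t}$, $X=H^2(\Gamma)$ (Lemma~\ref{TranslationsTilde}, together with the observation that $\tilde\bdeta_{\Delta t}$, being on each subinterval a convex combination of the $\bdeta^n_{\Delta t}$, is uniformly bounded in $L^\infty(0,T;H^2_0(\Gamma))$ by Proposition~\ref{EtaBound}) proves the $H^s(0,T;H^2(\Gamma))$ part of statement~2; and with $f={\bf v}^*_{\Delta t}$, $X=L^2(\Gamma)$ (Lemma~\ref{Translations}, Proposition~\ref{velocity_bounds}) proves the $H^s(0,T;L^2(\Gamma))$ part of statement~3.

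For the remaining part of statement~2 I would use \eqref{derivativeeta}, namely $\partial_t\tilde\bdeta_{\Delta t}={\bf v}^*_{\Delta t}$ a.e.\ on $(0,T)$. Since ${\bf v}^*_{\Delta t}$ is, by the previous step, uniformly bounded in $H^s(0,T;L^2(\Gamma))$ and $\tilde\bdeta_{\Delta t}$ is uniformly bounded in $L^2(0,T;L^2(\Gamma))$, the identification $H^{s+1}(0,T;X)=\{g\in L^2(0,T;X):\partial_t g\in H^s(0,T;X)\}$ shows that $\tilde\bdeta_{\Delta t}$ is uniformly bounded in $H^{s+1}(0,T;L^2(\Gamma))$ for every $0\le s<1/2$.

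The only genuinely nontrivial point, and the heart of the argument, is the spatial bound ${\bf v}^*_{\Delta t}\in L^2(0,T;H^{2s}(\Gamma))$ in statement~3, obtained by interpolating the two time-regularity bounds on $\tilde\bdeta_{\Delta t}$ just established. Using the standard vector-valued interpolation identities (see e.g.\ \cite{LionsMagenes})
\begin{equation*}
\big[H^{a}(0,T;X_0),H^{b}(0,T;X_1)\big]_\theta=H^{(1-\theta)a+\theta b}\big(0,T;[X_0,X_1]_\theta\big),\qquad [H^2(\Gamma),L^2(\Gamma)]_\theta=H^{2(1-\theta)}(\Gamma),
\end{equation*}
with $a=s$, $b=s+1$, $X_0=H^2(\Gamma)$, $X_1=L^2(\Gamma)$, $\theta=1-s$ (so that $(1-\theta)a+\theta b=s^2+(1-s)(s+1)=1$), I obtain that $\tilde\bdeta_{\Delta t}$ is uniformly bounded in $H^1(0,T;H^{2s}(\Gamma))$; differentiating once in time and invoking \eqref{derivativeeta} again yields ${\bf v}^*_{\Delta t}=\partial_t\tilde\bdeta_{\Delta t}$ uniformly bounded in $L^2(0,T;H^{2s}(\Gamma))$, completing statement~3 and the proposition. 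The main obstacle is precisely this trade: recognizing that the purely temporal information on $\tilde\bdeta_{\Delta t}$ — which ultimately stems from the numerical-dissipation estimate $\sum_n\|\bdeta^{n+1}_{\Delta t}-\bdeta^n_{\Delta t}\|^2_{H^2(\Gamma)}\le C$ of Proposition~\ref{Stability} through Lemma~\ref{TranslationsTilde} — can be converted into spatial regularity of the interface velocity; one also has to check that the distinction between $H^r$ and $H^r_0$ plays no role in the vector-valued interpolation used here (it does not, since all exponents involved lie strictly below the relevant half-integer thresholds) and that every interpolation constant is independent of $\Delta t$.
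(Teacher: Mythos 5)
Your proof is correct and follows essentially the same route as the paper's: it upgrades the $O(\sqrt h)$ translation estimates of Lemmas~\ref{Translations} and~\ref{TranslationsTilde} to $H^s$ bounds via the Gagliardo seminorm, then uses $\partial_t\tilde\bdeta_{\Delta t}={\bf v}^*_{\Delta t}$ and Lions--Magenes interpolation of $H^{s+1}(0,T;L^2(\Gamma))\cap H^{s}(0,T;H^2(\Gamma))$ into $H^1(0,T;H^{2s}(\Gamma))$ to obtain the spatial bound on ${\bf v}^*_{\Delta t}$. The only difference is cosmetic: you phrase the Gagliardo seminorm as a single integral over the translation parameter, whereas the paper changes variables inside the double integral, and you spell out the $L^2$-in-time bound and the arithmetic of the interpolation exponents more explicitly.
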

\proof
Since we have already proved that ${\bf u}_{\Delta t}$ is uniformly bounded in $L^2(0,T;L^2(\Omega))$, it only remains to prove that the following semi-norm is finite:
$$
\|{\bf u}_{\Delta t}\|_{H^{s}(0,T;L^2(\Omega))}^2=\int_0^T\int_0^T\frac{\|{\bf u}_{\Delta t}(t)-{\bf u}_{\Delta t}(\tau) \|_{L^2(\Omega)}^2}{| t-\tau |^{1+2s}}dt d\tau.
$$
By a simple change of variables $h=t-\tau$ and Lemma~\ref{Translations} we get:
$$
\|{\bf u}_{\Delta t} \|_{H^{s}(0,T;L^2(\Omega))}^2=\int_{-T}^{T}\frac{dh}{|h|^{1+2s}}\int_0^T  \|{\bf u}_{\Delta t}(t-h)-{\bf u}_{\Delta t}(t) \|_{L^2({\Omega})}^2d\tau
\leq\int_{-T}^{T}\frac{|h|dh}{|h|^{1+2s}}.
$$
This integral is finite for $s<1/2$ and therefore we have proved the first statement. 

The first part of the second statement and the first part of the third statement, i.e. $({\tilde\bdeta}_{\Delta t})_{\Delta t>0}$ and 
$({\bf v}^*_{\Delta t})_{\Delta t>0}$ are uniformly bounded in $H^s(0,T;H^2(\Gamma))$ and $H^s(0,T;L^2(\Gamma))$, $0 \le s < 1/2$, respectively, are proved analogously by using Lemma~\ref{Translations} again.

Let us now prove the boundedness of $({\bf v}^*_{\Delta t})_{\Delta t > 0}$ in $L^2(0,T;H^{2s}(\Gamma))$.
First notice that from~\eqref{derivativeeta} and the uniform boundedness of $({\bf v}^*_{\Delta t})_{\Delta t>0}$ 
in $H^s(0,T;L^2(\Gamma))$, $s<1/2$  that we just proved,
we see that $(\partial_t{\tilde \bdeta}_{\Delta t})$ is uniformly bounded in $H^s(0,T;L^2(\Gamma))$, $s<1/2$. Therefore,
$$
({\tilde \bdeta}_{\Delta t})\;{\rm is\; uniformly\; bounded\; in}\; H^{s+1}(0,T;L^2(\Gamma))\cap H^{s}(0,T;H^2(\Gamma)),\; s<1/2.
$$
Now, by the interpolation property (see e.g. \cite{LionsMagenes} Section 1.9.4 p.p. 47)  
we obtain that $({\tilde \bdeta}_{\Delta t})$ is uniformly bounded in $H^1(0,T;H^{2s}(\Gamma))$. 
By using~\eqref{derivativeeta} again we conclude that $({\bf v}^*_{\Delta t})_{\Delta t>0}=(\partial_t\tilde{\bdeta}_{\Delta t})$ 
is uniformly bounded in $L^2(0,T;H^{2s}(\Gamma))$, $s<1/2$.
\qed

Notice that the proof of this Proposition heavily relies on the definition of the new approximate solution sequences \eqref{tilde}
and their properties \eqref{EtaLinear} and \eqref{derivativeeta}. 
In particular, \eqref{derivativeeta} allowed us to obtain information about the regularity properties of 
$({\bf v}^*_{\Delta t})_{\Delta t>0}$ via the regularity of $(\partial_t{\tilde \bdeta}_{\Delta t})$.
It is because of this result that we introduced the new definition of approximate solutions given by \eqref{tilde}.
We shall see below that the limits of approximate sequences as $\Delta t \to 0$
do not depend on the type of extension of the approximate values of the solution at points $n \Delta t$
onto the time sub-interval $(n \Delta t, (n+1) \Delta t)$. Therefore, the introduction of the new approximate sequences
in \eqref{tilde} to obtain additional information about the regularity of the limiting solution is justified. 

We note here a side remark that we could have proved Proposition~\ref{AdditionalReg} 
by using a slightly different but equivalent approach, relying on {\bf Nikolskii spaces}.  
More precisely, from Lemma~\ref{Translations} we can directly conclude that sequences $({\bf u})_{\Delta t}$, ${(\bf v})_{\Delta t}$, $({\bf v})^*_{\Delta t}$, $(\bdeta)_{\Delta t}$ are uniformly bounded in the following Nikolskii spaces $N^{1/2,2}((0,T);L^2(\Omega))$, $N^{1/2,2}((0,T);L^2(\Gamma))$ and $N^{1/2,2}((0,T);H^2(\Gamma))$, respectively (see e.g. \cite{Simon2} for the definition of the Nikolskii spaces). Similarly, Lemma~\ref{TranslationsTilde} gives uniform boundedness of $(\tilde{\bdeta})_{\Delta t}$ in $N^{1/2,2}((0,T);H^2(\Gamma))$. Proposition~\ref{AdditionalReg} then follows  directly from the embeddings of Nikolskii spaces into $H^s$ spaces \cite{Simon2}.

We are now ready to state our main compactness result. It relies on the following compactness theorem by Simon,
stated in \cite{Simon} as Corollary 5. 
\begin{theorem}{\bf (Corollary 5 \cite{Simon})} \label{Simon}
Assume that $X$, $B$, and $Y$ are Banach spaces,
and $X \subset B \subset Y$ with compact embedding $X \subset \subset B$. 
Let $1 \le p \le \infty$ and $1\le r \le \infty$. Let $F$ be bounded in $L^p(0,T; X) \cap W^{s,r} (0,T; Y)$, 
where $s > 0$ if $r \ge p$, and where $s > 1/r - 1/p$ if $r < p$. 
Then $F$ is relatively compact in $L^p(0,T; B)$ (and in $C(0,T;B)$ if $p = \infty$).
\end{theorem}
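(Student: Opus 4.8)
Since this statement is quoted verbatim from Simon's paper \cite{Simon}, I will only outline the standard argument. The plan is to reduce the claim to Simon's abstract characterization of compactness in $L^p(0,T;B)$ --- the Bochner--space analogue of the Riesz--Fr\'echet--Kolmogorov theorem --- namely that a family $F\subset L^p(0,T;B)$, $1\le p<\infty$, is relatively compact if and only if (a) the set $\left\{\int_{t_1}^{t_2}f\,dt:f\in F\right\}$ is relatively compact in $B$ for all $0<t_1<t_2<T$, and (b) $\sup_{f\in F}\|\tau_h f-f\|_{L^p(h,T;B)}\to 0$ as $h\to 0^+$, where $\tau_h f(t)=f(t-h)$; for $p=\infty$ one replaces (b) by equicontinuity of $F$ as maps $[0,T]\to B$ together with pointwise precompactness, and concludes relative compactness in $C([0,T];B)$ by Arzel\`a--Ascoli. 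Property (a) is immediate: by H\"older, $\|\int_{t_1}^{t_2}f\,dt\|_X\le(t_2-t_1)^{1/p'}\|f\|_{L^p(0,T;X)}$ is uniformly bounded, so the set in (a) is bounded in $X$, hence by the compact embedding $X\subset\subset B$ it is precompact in $B$.

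Next I would establish the Ehrling--Lions interpolation inequality: because $X\subset\subset B\subset Y$, for every $\varepsilon>0$ there is a constant $C_\varepsilon$ with $\|u\|_B\le\varepsilon\|u\|_X+C_\varepsilon\|u\|_Y$ for all $u\in X$ (the usual contradiction argument using a $B$-normalized sequence together with the compact embedding and the injectivity of $B\hookrightarrow Y$). Applying this pointwise in $t$ to $u=\tau_h f-f$ and integrating gives $\|\tau_h f-f\|_{L^p(h,T;B)}\le 2\varepsilon\,\|f\|_{L^p(0,T;X)}+C_\varepsilon\|\tau_h f-f\|_{L^p(h,T;Y)}$. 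The first term is uniformly bounded by $2\varepsilon\sup_F\|f\|_{L^p(0,T;X)}$, so establishing (b) comes down to showing that the $Y$-valued translations tend to zero in $L^p$ in time, uniformly over $F$.

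That last estimate is where the $W^{s,r}(0,T;Y)$ bound and the arithmetic linking $s$, $r$, $p$ enter. If $r\ge p$, then $L^r(0,T)\hookrightarrow L^p(0,T)$ on the finite interval, so $F$ is bounded in $W^{s,p}(0,T;Y)$; for $s\in(0,1)$ the Gagliardo seminorm directly gives $\|\tau_h f-f\|_{L^p(h,T;Y)}\le C h^{s}\|f\|_{W^{s,p}(0,T;Y)}$ (and $s\ge 1$ is even easier, through the time derivative), which is uniformly $o(1)$. If $r<p$, I would first invoke the fractional Sobolev embedding in the single time variable, $W^{s,r}(0,T;Y)\hookrightarrow W^{\sigma,p}(0,T;Y)$ with $\sigma=s-(1/r-1/p)>0$ --- this is exactly where the hypothesis $s>1/r-1/p$ is used --- and then argue as in the case $r\ge p$ to obtain $\|\tau_h f-f\|_{L^p(h,T;Y)}\le C h^{\sigma}\to 0$ uniformly. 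Combined with the Ehrling inequality this yields (b), and with (a) the relative compactness in $L^p(0,T;B)$. For $p=\infty$ the same ingredients ($W^{s,\infty}$ in time, or $W^{s,r}\hookrightarrow C^{0,\,s-1/r}$ in time when $s>1/r$, together with pointwise precompactness of $\{f(t)\}$ in $B$ since it is bounded in $X$) give equicontinuity of $F$ into $B$, and Arzel\`a--Ascoli finishes the proof in $C([0,T];B)$.

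The main obstacle is precisely the regime $r<p$: an $L^r$-in-time bound on the translations cannot be promoted to an $L^p$-in-time bound by a naive inclusion, and one is forced through the fractional Sobolev embedding in the time variable, which is what pins down the sharp threshold $s>1/r-1/p$. Beyond that, the only care needed is bookkeeping --- keeping the Ehrling constant and the embedding constants independent of $f\in F$ and of $h$, and treating the endpoint $p=\infty$ via Arzel\`a--Ascoli rather than the Riesz--Fr\'echet--Kolmogorov criterion.
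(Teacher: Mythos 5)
The paper does not prove this statement; it is quoted as Corollary 5 from Simon's paper \cite{Simon} and used as a black box. There is therefore no ``paper's own proof'' to compare against.

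As a standalone sketch, your outline is essentially Simon's original argument: reduce to the Riesz--Fr\'echet--Kolmogorov-type criterion for Bochner spaces, dispose of the precompactness-of-averages condition via the bound in $L^p(0,T;X)$ and the compact embedding $X\subset\subset B$, and control the time translations in $B$ through the Ehrling inequality $\|u\|_B\le\varepsilon\|u\|_X+C_\varepsilon\|u\|_Y$ combined with an $o(1)$ translation estimate in $Y$ coming from the $W^{s,r}(0,T;Y)$ bound, which is exactly where the threshold $s>1/r-1/p$ appears. One small imprecision: the step ``$L^r(0,T)\hookrightarrow L^p(0,T)$, so $F$ is bounded in $W^{s,p}(0,T;Y)$'' is not quite a one-line consequence, since the Gagliardo seminorm of $W^{s,p}$ involves a double integral against $|t-\tau|^{-1-sp}$ and the naive H\"older argument produces a divergent diagonal factor. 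The embedding $W^{s,r}(0,T)\hookrightarrow W^{s,p}(0,T)$ for $r\ge p$ on a bounded interval is true (it is the one-dimensional fractional Sobolev embedding $s-1/r\ge s-1/p$), but it needs that theorem, not just $L^r\hookrightarrow L^p$. Simon's own route avoids this entirely by passing through Nikolskii spaces $N^{\sigma,p}(0,T;Y)$: the Nikolskii seminorm $\sup_{h}h^{-\sigma}\|\tau_h f-f\|_{L^p(h,T;Y)}$ is precisely the translation estimate one wants, $W^{s,r}\hookrightarrow N^{s,r}$ is elementary, and $N^{s,r}\hookrightarrow N^{s,p}$ for $r\ge p$ on a bounded interval really is a one-line H\"older applied to translations. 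This is also the formulation the paper itself alludes to elsewhere (``or alternatively Nikolskii spaces $N^{s,p}$''). With that bookkeeping repaired, your sketch is correct and matches the cited argument.
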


\begin{theorem}{\bf (Compactness)}\label{compactness}
Sets $\{{\bf u}_{\Delta t}:\Delta t>0\}$ and $\{{\bf v^*}_{\Delta t}:\Delta t>0\}$ are relatively compact in $L^2(0,T;H^{2s}(\Omega))$, $s<1/2$, and $L^2(0,T;H^{2s}(\Gamma))$, respectively.
\end{theorem}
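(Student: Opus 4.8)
The plan is to apply Simon's compactness theorem (Theorem~\ref{Simon}, i.e. Corollary 5 of \cite{Simon}) to each of the two families, choosing as the intermediate Banach space $B$ a fractional Sobolev space wedged \emph{strictly} between the space that controls spatial regularity and $L^2$. For the fluid velocities I would set $X=H^1(\Omega)$, $Y=L^2(\Omega)$, and $B=H^{2s}(\Omega)$ for the fixed target exponent $s<1/2$. By Proposition~\ref{velocity_bounds} the family $\{{\bf u}_{\Delta t}\}$ is uniformly bounded in $L^2(0,T;H^1(\Omega))$, and by Proposition~\ref{AdditionalReg}(1) it is uniformly bounded in $H^{s_0}(0,T;L^2(\Omega))$ for every $s_0<1/2$; fix one such $s_0\in(0,1/2)$. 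Since $2s<1$ and $\Omega$ is a bounded Lipschitz (polygonal) domain, Rellich's theorem for fractional spaces gives the compact embedding $H^1(\Omega)\subset\subset H^{2s}(\Omega)$, while $H^{2s}(\Omega)\subset L^2(\Omega)$ continuously. Applying Theorem~\ref{Simon} with $p=r=2$ --- so that its hypothesis reduces to $s_0>0$ --- yields relative compactness of $\{{\bf u}_{\Delta t}\}$ in $L^2(0,T;H^{2s}(\Omega))$.

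For the interface velocities $\{{\bf v}^*_{\Delta t}\}$ the spatial regularity available from Proposition~\ref{AdditionalReg}(3) is only $L^2(0,T;H^{2s}(\Gamma))$ for $s<1/2$, which is precisely the target space, so I would introduce an auxiliary exponent to create the gap needed for compactness. Fix the target $s<1/2$ and pick $\sigma$ with $s<\sigma<1/2$. By Proposition~\ref{AdditionalReg}(3), $\{{\bf v}^*_{\Delta t}\}$ is uniformly bounded in $L^2(0,T;H^{2\sigma}(\Gamma))$ and, fixing some $s_0\in(0,1/2)$, in $H^{s_0}(0,T;L^2(\Gamma))$. Taking $X=H^{2\sigma}(\Gamma)$, $B=H^{2s}(\Gamma)$, $Y=L^2(\Gamma)$, and using that $2\sigma>2s$ together with the fact that $\Gamma=(0,L)$ is a bounded interval, we get $H^{2\sigma}(\Gamma)\subset\subset H^{2s}(\Gamma)\subset L^2(\Gamma)$; Theorem~\ref{Simon} with $p=r=2$ then gives relative compactness of $\{{\bf v}^*_{\Delta t}\}$ in $L^2(0,T;H^{2s}(\Gamma))$, as claimed.

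The bulk of the work has already been carried out in the translation estimates of Lemma~\ref{Translations} and Lemma~\ref{TranslationsTilde}, and in Proposition~\ref{AdditionalReg} --- in particular in the interpolation step there that trades $H^s$-in-time control of ${\bf v}^*_{\Delta t}$ for $L^2$-in-time control of its $H^{2s}(\Gamma)$ spatial regularity. Given those ingredients, the only genuinely delicate point, and the one I expect to be the main obstacle, is that one cannot take $B$ to be the very space in which spatial boundedness was established: one must leave a strictly positive margin in the Sobolev exponent so that the embedding $X\subset\subset B$ is compact. This is automatic for ${\bf u}_{\Delta t}$ (we have the full $H^1$ bound and $2s<1$), but for ${\bf v}^*_{\Delta t}$ it forces the auxiliary exponent $\sigma$; the remaining verifications --- the Rellich-type compactness of fractional Sobolev embeddings on the polygon $\Omega$ and the interval $\Gamma$, both being extension domains, and the check that with $r=p$ Simon's theorem needs only a positive time exponent --- are routine.
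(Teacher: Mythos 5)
Your argument is correct and is essentially the paper's own proof: both invoke Simon's Corollary~5 (Theorem~\ref{Simon}) with $p=r=2$, using the spatial bounds from Proposition~\ref{velocity_bounds} and the fractional-in-time bounds from Proposition~\ref{AdditionalReg}. You simply spell out the choice of triples $X\subset\subset B\subset Y$ explicitly, and your observation that for $\{{\bf v}^*_{\Delta t}\}$ one must insert an auxiliary exponent $\sigma\in(s,1/2)$ to create a genuine compact gap is a worthwhile clarification that the paper's terse proof leaves implicit.
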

\proof
The proof of this theorem follows from the estimates obtained in Propositions~\ref{velocity_bounds} and~\ref{AdditionalReg},
and by applying Theorem~\ref{Simon} with $p = r = 2$ and $s > 0$.
\qed

We remark that in contrast with the no-slip condition case studied in \cite{BorSun}, where we obtained partial regularity of $\partial_t \eta$
from the trace of the fluid velocity on the interface and the estimates related to the fluid viscous dissipation, here we had to calculate directly all the time-shifts for $\bdeta_{\Delta t}, {\bf u}_{\Delta t}$,
and ${\bf v}_{\Delta t}$, and ${\bf v}^*_{\Delta t}$ to obtain uniform boundedness in the $H^s$ spaces which, combined with the Simon's 
Corollary 5 and interpolation of classical Sobolev spaces $H^s$ with real exponents $s$, provide compactness. 

The compactness result stated in Theorem~\ref{compactness} implies the following strong convergence results.
\begin{corollary}\label{u_convergence}
We have the following strong convergence results as $\Delta t \to 0$:
\begin{enumerate}
\item ${\bf u}_{\Delta t} \to {\bf u} \quad {\rm in} \ L^2(0,T; H^{2s}(\Omega)), \  s < 1/2,$
\item ${\bf v}^*_{\Delta t} \to {\bf v} \quad {\rm in} \ L^2(0,T; H^{2s}(\Gamma)), \  s < 1/2,$
\item ${\bf v}_{\Delta t} \to {\bf v} \quad {\rm in} \ L^2(0,T; H^{2s}(\Gamma)), \  s < 1/2.$
\end{enumerate}
\end{corollary}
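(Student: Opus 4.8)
The plan is to deduce Corollary~\ref{u_convergence} from the relative compactness proved in Theorem~\ref{compactness} together with the weak and weak* limits already identified in Lemma~\ref{weak_convergence}, via a subsequence (Urysohn) argument, and then to transfer the result for ${\bf v}^*_{\Delta t}$ to ${\bf v}_{\Delta t}$ by comparing the two sequences.

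\emph{Items 1 and 2.} Fix $s<1/2$. By Theorem~\ref{compactness} the family $\{{\bf u}_{\Delta t}\}$ is relatively compact in $L^2(0,T;H^{2s}(\Omega))$, so every subsequence contains a further subsequence converging strongly in $L^2(0,T;H^{2s}(\Omega))$ to some limit ${\bf z}$. Strong convergence in $L^2(0,T;H^{2s}(\Omega))$ implies weak convergence in $L^2(0,T;L^2(\Omega))$, while by \eqref{weakconv} the whole sequence $({\bf u}_{\Delta t})$ already converges weakly* in $L^\infty(0,T;L^2(\Omega))$ (in particular weakly in $L^2(0,T;L^2(\Omega))$) to ${\bf u}$; uniqueness of weak limits then forces ${\bf z}={\bf u}$. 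Hence every subsequence of $({\bf u}_{\Delta t})$ has a sub-subsequence converging in $L^2(0,T;H^{2s}(\Omega))$ to the fixed limit ${\bf u}$, so the entire sequence converges there, which is item 1. Item 2 is obtained in exactly the same way: $\{{\bf v}^*_{\Delta t}\}$ is relatively compact in $L^2(0,T;H^{2s}(\Gamma))$ by Theorem~\ref{compactness}, its weak* limit in $L^\infty(0,T;L^2(\Gamma))$ is ${\bf v}^*$, and ${\bf v}^*={\bf v}$ by \eqref{v_star}, so ${\bf v}^*_{\Delta t}\to{\bf v}$ strongly in $L^2(0,T;H^{2s}(\Gamma))$.

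\emph{Item 3.} The family $\{{\bf v}_{\Delta t}\}$ is not among those handled by Theorem~\ref{compactness}, so I would obtain its convergence by comparison with ${\bf v}^*_{\Delta t}$. From the definitions \eqref{aproxNS}, on each sub-interval $((n-1)\Delta t,n\Delta t]$ one has ${\bf v}_{\Delta t}={\bf v}^{n}_{\Delta t}$ and ${\bf v}^*_{\Delta t}={\bf v}^{n-1/2}_{\Delta t}$, so that
\[
\|{\bf v}_{\Delta t}-{\bf v}^*_{\Delta t}\|^2_{L^2(0,T;L^2(\Gamma))}=\Delta t\sum_{n=0}^{N-1}\|{\bf v}^{n+1}_{\Delta t}-{\bf v}^{n+1/2}_{\Delta t}\|^2_{L^2(\Gamma)}\le C\,\Delta t ,
\]
the last bound being item 3 of Proposition~\ref{Stability}. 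Together with item 2 and the triangle inequality this already yields ${\bf v}_{\Delta t}\to{\bf v}$ in $L^2(0,T;L^2(\Gamma))$. To upgrade to the $H^{2s}(\Gamma)$-norm I would proceed as in Proposition~\ref{AdditionalReg}: the time-translation estimate of Lemma~\ref{Translations} shows that $\{{\bf v}_{\Delta t}\}$ is uniformly bounded in $H^{s}(0,T;L^2(\Gamma))$ for $s<1/2$, and the difference ${\bf v}_{\Delta t}-{\bf v}^*_{\Delta t}$, being $O(\sqrt{\Delta t})$ in $L^2(0,T;L^2(\Gamma))$ and bounded in $L^\infty(0,T;L^2(\Gamma))$, is then interpolated against the uniform $L^2(0,T;H^{2s}(\Gamma))$-bound for ${\bf v}^*_{\Delta t}$ from Proposition~\ref{AdditionalReg} to conclude ${\bf v}_{\Delta t}\to{\bf v}$ in $L^2(0,T;H^{2s}(\Gamma))$, $s<1/2$.

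I expect item 3 to be the main obstacle. Unlike in the no-slip case, the interface trace ${\bf v}_{\Delta t}$ of the fluid velocity carries no uniform spatial smoothing in its tangential component, so its $H^{2s}(\Gamma)$-convergence cannot be read off from a compactness statement for ${\bf v}_{\Delta t}$ itself; it has to be extracted from the structure-velocity approximation ${\bf v}^*_{\Delta t}=\partial_t\tilde{\bdeta}_{\Delta t}$ (see \eqref{derivativeeta}) together with the quantitative closeness estimate above, and making the interpolation step precise is the delicate point. Items 1 and 2 are, by contrast, routine once Theorem~\ref{compactness} and Lemma~\ref{weak_convergence} are in hand.
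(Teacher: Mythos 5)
Your handling of items 1 and 2 is correct and is exactly the argument the paper has in mind: relative compactness of $\{{\bf u}_{\Delta t}\}$ and $\{{\bf v}^*_{\Delta t}\}$ from Theorem~\ref{compactness}, identification of the cluster point via the weak limits of Lemma~\ref{weak_convergence} together with \eqref{v_star}, and the subsequence principle. The paper gives no explicit proof of the corollary (it merely asserts that Theorem~\ref{compactness} ``implies'' it), so for these two items your reconstruction fills in the expected step.

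For item 3 you correctly flag that $\{{\bf v}_{\Delta t}\}$ is not among the families covered by Theorem~\ref{compactness}, and the first half of your argument is sound: Proposition~\ref{Stability}(3) gives $\|{\bf v}_{\Delta t}-{\bf v}^*_{\Delta t}\|^2_{L^2(0,T;L^2(\Gamma))}\le C\,\Delta t$, which with item 2 and the triangle inequality yields ${\bf v}_{\Delta t}\to{\bf v}$ in $L^2(0,T;L^2(\Gamma))$. The upgrade to $L^2(0,T;H^{2s}(\Gamma))$, however, has a genuine gap. To turn $O(\sqrt{\Delta t})$ smallness in $L^2(0,T;L^2(\Gamma))$ into smallness in $L^2(0,T;H^{2s}(\Gamma))$ by interpolation you would need a uniform bound on the \emph{same} quantity, ${\bf v}_{\Delta t}-{\bf v}^*_{\Delta t}$ (or on ${\bf v}_{\Delta t}$ itself), in $L^2(0,T;H^{2s'}(\Gamma))$ for some $s'>s$, and no such spatial bound is available. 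Lemma~\ref{Translations} applied to ${\bf v}_{\Delta t}$ gives only $H^s(0,T;L^2(\Gamma))$ --- temporal regularity with $L^2(\Gamma)$ spatial values --- while the $L^2(0,T;H^{2s}(\Gamma))$ bound in Proposition~\ref{AdditionalReg}(3) is derived specifically from the identity $\partial_t\tilde{\bdeta}_{\Delta t}={\bf v}^*_{\Delta t}$ and the $H^2(\Gamma)$ regularity of $\tilde{\bdeta}_{\Delta t}$, a structure that has no analogue for ${\bf v}_{\Delta t}$, whose tangential component is merely $L^2(\Gamma)$. ``Interpolating against the bound for ${\bf v}^*_{\Delta t}$'' controls norms of ${\bf v}^*_{\Delta t}$, not spatial regularity of the difference, so the step does not close. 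What your argument actually establishes is ${\bf v}_{\Delta t}\to{\bf v}$ in $L^2(0,T;L^2(\Gamma))$; this is in fact all that is needed when passing to the limit in \eqref{ApproxEq}, where ${\bf v}_{\Delta t}$ enters only against $L^\infty$ factors, but it falls short of the $H^{2s}(\Gamma)$ claim as written.
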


To get  strong convergence results for the structure displacements $\bdeta_{\Delta t}$ we proceed in the same way as in \cite{BorSun}. 
Namely, from Propositions~\ref{EtaBound} and \ref{velocity_bounds}, and from $\partial \tilde{\bdeta}_{\Delta t} = {\bf v}^*_{\Delta t}$, we obtain
that $\tilde\bdeta_{\Delta t}$ is uniformly bounded in $L^\infty(0,T;H_0^2(\Gamma))^2\cap W^{1,\infty}(0,T;L^2(\Gamma))^2$. 
From the continuous embedding 
$$
L^\infty(0,T;H_0^2(\Gamma))^2\cap W^{1,\infty}(0,T;L^2(\Gamma))^2 \hookrightarrow C^{0,1-\alpha}([0,T], H^{2\alpha}(\Gamma)),
\  0 < \alpha < 1,
$$
we obtain uniform boundedness of $\tilde\bdeta_{\Delta t}$ in $C^{0,1-\alpha}([0,T], H^{2\alpha}(\Gamma))$. 
Now, to get compactness in space we recall that $H^{2\alpha}$ is continuously embedded into $H^{2\alpha - \epsilon}$.
By the Arzel\`{a}-Ascoli theorem this embedding is compact. 
In fact, by the application of the Arzel\`{a}-Ascoli theorem to the functions in $C^{0,1-\alpha}([0,T], H^{2\alpha}(\Gamma))$
we get the compactness is time as well. More precisely, we obtain the existence of a subsequence, which we denote by
$\tilde\bdeta_{\Delta t}$ again, such that
$$
\tilde\bdeta_{\Delta t} \to \tilde\bdeta \quad {\rm in }\quad C([0,T];H^{2s}(\Gamma)), \ 0 < s < 1.
$$
Since sequences $\tilde\bdeta_{\Delta t}$ and $\bdeta_{\Delta t}$ have the same limit $\tilde\bdeta = \bdeta \in C([0,T];H^{2s}(\Gamma))$,
where $\bdeta$ is the weak* limit discussed in Lemma~\ref{weak_convergence}, 
we obtain 
$$
\tilde\bdeta_{\Delta t} \to \bdeta \quad {\rm in }\quad C([0,T];H^{2s}(\Gamma)), \ 0 < s < 1.
$$
By combining this statement with the continuity in time of $\bdeta$, as was done in Lemma 3 of \cite{BorSun}, 
we obtain the following strong convergence results for the structure:

\begin{theorem}\label{ConvEta}
The following strong convergence results hold as $\Delta t \to 0$:
\begin{enumerate}
\item $\displaystyle{\bdeta_{\Delta t}\to\bdeta\;{\rm in}\; L^{\infty}(0,T;H^{2s}(\Gamma))}$, $s < 1$,
\item $\displaystyle{T_{\Delta t}\bdeta_{\Delta t}\to\bdeta\;{\rm in}\; L^{\infty}(0,T;H^{2s}(\Gamma))}$, $s < 1$.
\end{enumerate}
\end{theorem}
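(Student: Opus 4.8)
The plan is to harvest all the compactness already established for the linear-in-time interpolant $\tilde\bdeta_{\Delta t}$ and transfer it to the piecewise-constant sequence $\bdeta_{\Delta t}$ and its shift $T_{\Delta t}\bdeta_{\Delta t}$. First I would record the uniform bounds on $\tilde\bdeta_{\Delta t}$: since $\tilde\bdeta_{\Delta t}(t)$ is a convex combination of the $\bdeta^n_{\Delta t}$'s, Proposition~\ref{EtaBound} gives boundedness in $L^\infty(0,T;H^2_0(\Gamma))^2$, while $\partial_t\tilde\bdeta_{\Delta t}={\bf v}^*_{\Delta t}$ (identity~\eqref{derivativeeta}) together with Proposition~\ref{velocity_bounds} gives boundedness in $W^{1,\infty}(0,T;L^2(\Gamma))^2$. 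The Hilbert interpolation embedding
$$
L^\infty(0,T;H^2_0(\Gamma))^2\cap W^{1,\infty}(0,T;L^2(\Gamma))^2\hookrightarrow C^{0,1-\alpha}([0,T];H^{2\alpha}(\Gamma))^2,\quad 0<\alpha<1,
$$
then yields a uniform bound of $\tilde\bdeta_{\Delta t}$ in $C^{0,1-\alpha}([0,T];H^{2\alpha}(\Gamma))$.

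Next I would fix $s<\alpha<1$ and apply Arzel\`a--Ascoli: the embedding $H^{2\alpha}(\Gamma)\hookrightarrow\hookrightarrow H^{2s}(\Gamma)$ is compact, and the uniform H\"older bound supplies equicontinuity in time of $\{\tilde\bdeta_{\Delta t}(t)\}$ in $H^{2s}(\Gamma)$, so along a subsequence (not relabeled) $\tilde\bdeta_{\Delta t}\to\tilde\bdeta$ in $C([0,T];H^{2s}(\Gamma))$. To identify the limit, I would use the standard estimate $\|\tilde\bdeta_{\Delta t}-\bdeta_{\Delta t}\|^2_{L^2(0,T;L^2(\Gamma))}\le\frac{\Delta t}{3}\sum_n\|\bdeta^{n+1}_{\Delta t}-\bdeta^n_{\Delta t}\|^2_{L^2(\Gamma)}\le C\Delta t\to0$ (cf.\ \cite{Tem}, p.~328, and Proposition~\ref{AdditionalReg}), so $\tilde\bdeta_{\Delta t}$ and $\bdeta_{\Delta t}$ have the same weak$^*$ limit in $L^\infty(0,T;L^2(\Gamma))$; by Lemma~\ref{weak_convergence} that limit is $\bdeta$, hence $\tilde\bdeta=\bdeta$ and in particular $\bdeta\in C^{0,1-\alpha}([0,T];H^{2s}(\Gamma))$.

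Finally I would promote the $C$-in-time convergence of $\tilde\bdeta_{\Delta t}$ to the $L^\infty$-in-time convergence of $\bdeta_{\Delta t}$. For $t\in(n\Delta t,(n+1)\Delta t]$ one has $\bdeta_{\Delta t}(t)=\bdeta^{n+1}_{\Delta t}=\tilde\bdeta_{\Delta t}((n+1)\Delta t)$, so
$$
\|\bdeta_{\Delta t}(t)-\bdeta(t)\|_{H^{2s}(\Gamma)}\le\|\tilde\bdeta_{\Delta t}-\bdeta\|_{C([0,T];H^{2s}(\Gamma))}+\|\bdeta((n+1)\Delta t)-\bdeta(t)\|_{H^{2s}(\Gamma)},
$$
where the first term vanishes as $\Delta t\to0$ and the second is controlled by the modulus of continuity of $\bdeta$ over an interval of length $\Delta t$, i.e.\ $O((\Delta t)^{1-\alpha})\to0$. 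Taking the supremum in $t$ gives statement~1. For statement~2 I would write $T_{\Delta t}\bdeta_{\Delta t}(t)=\bdeta_{\Delta t}(t-\Delta t)$, insert $\pm\bdeta(t-\Delta t)$, and bound one piece by statement~1 and the other by the time-continuity of $\bdeta$ again.

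The hard part will be Step~4: passing from the natural $L^2$-in-time (or $C$-in-time for the interpolant) estimates to the $L^\infty$-in-time statement for the piecewise-constant $\bdeta_{\Delta t}$. This is not a soft functional-analytic step — it genuinely relies on the uniform-in-$\Delta t$ H\"older continuity in time of the approximations, which is available only because of the extra $W^{1,\infty}(0,T;L^2(\Gamma))$ bound on the interpolants furnished by Propositions~\ref{velocity_bounds} and~\ref{AdditionalReg}. Everything else (uniform bounds, Arzel\`a--Ascoli, and the identification $\tilde\bdeta=\bdeta$) is routine.
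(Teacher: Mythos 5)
Your proposal matches the paper's proof in both structure and substance: uniform bounds on $\tilde\bdeta_{\Delta t}$ in $L^\infty(H^2_0)\cap W^{1,\infty}(L^2)$, the Hilbert-interpolation embedding into $C^{0,1-\alpha}([0,T];H^{2\alpha})$, Arzel\`a--Ascoli with the compact embedding $H^{2\alpha}\hookrightarrow\hookrightarrow H^{2s}$ to get $\tilde\bdeta_{\Delta t}\to\bdeta$ in $C([0,T];H^{2s})$, and then triangle inequality plus the uniform modulus of continuity of $\bdeta$ to transfer the convergence to the piecewise-constant $\bdeta_{\Delta t}$ and its shift. The paper compresses your final step into a citation of Lemma~3 of \cite{BorSun} (``by combining this statement with the continuity in time of $\bdeta$''); your writeout of that step, including the key observation $\bdeta_{\Delta t}(t)=\tilde\bdeta_{\Delta t}((n+1)\Delta t)$ for $t\in(n\Delta t,(n+1)\Delta t]$, is exactly what that citation hides, and your identification $\tilde\bdeta=\bdeta$ via the $L^2$-in-time difference estimate is a valid (and slightly more explicit) justification than the paper's terse assertion that the two sequences share a limit.
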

To pass to the limit in the weak formulation, we need uniform convergence of $\bdeta_{\Delta t}$. This is where
the fact that we work in 2D rather than in 3D comes into play. 
Namely, since in our 2D problem $\Gamma$ is a 1D domain, we have that $H^\alpha(\Gamma)$ is embedded into $C^1(\overline\Gamma)$
for $s > 3/2$, and so the first statement of Theorem \ref{ConvEta} implies $\displaystyle{\bdeta_{\Delta t}\to\bdeta\;{\rm in}\; L^{\infty}(0,T;C^1(\overline{\Gamma}))}$. Moreover, we have the following Corollary.

\begin{corollary}\label{ConvEtaC1}
The following uniform convergence results hold as $\Delta t \to 0$:
\begin{enumerate}
\item $\displaystyle{\bdeta_{\Delta t}\to\bdeta\;{\rm in}\; L^{\infty}(0,T;C^1(\overline{\Gamma}))}$,
\item $\displaystyle{T_{\Delta t}\bdeta_{\Delta t}\to\bdeta\;{\rm in}\; L^{\infty}(0,T;C^1(\overline{\Gamma}))}$.
\end{enumerate}
\end{corollary}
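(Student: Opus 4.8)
The plan is to deduce Corollary~\ref{ConvEtaC1} directly from the strong convergence in $L^\infty(0,T;H^{2s}(\Gamma))$ already established in Theorem~\ref{ConvEta}, by invoking a Sobolev embedding on the interface that is available precisely because $\Gamma=(0,L)$ is one-dimensional (i.e.\ because we work with a $2D$ fluid).

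First I would fix an exponent $s$ with $3/4<s<1$; this interval is nonempty, so both parts of Theorem~\ref{ConvEta} apply and give $\bdeta_{\Delta t}\to\bdeta$ and $T_{\Delta t}\bdeta_{\Delta t}\to\bdeta$ in $L^\infty(0,T;H^{2s}(\Gamma))$. Next I would recall that, since $\Gamma$ is one-dimensional, the Sobolev embedding theorem yields a continuous inclusion $H^{2s}(\Gamma)\hookrightarrow C^1(\overline\Gamma)$ whenever $2s>1+\tfrac12=\tfrac32$, that is, whenever $s>3/4$ — exactly the range selected above. (Since $2s$ is not a half-integer in this range, any of the equivalent choices of the $H^{2s}(\Gamma)$ norm may be used; the clamped conditions \eqref{clamped} only make the norm on $H_0^2(\Gamma)$, and hence on the interpolated spaces, comparable to the full one.)

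The conclusion is then immediate: applying the bounded linear embedding $H^{2s}(\Gamma)\hookrightarrow C^1(\overline\Gamma)$ for a.e.\ $t\in(0,T)$ to $\bdeta_{\Delta t}(t,\cdot)-\bdeta(t,\cdot)$ and taking the essential supremum over $t$ converts the convergence in $L^\infty(0,T;H^{2s}(\Gamma))$ into convergence in $L^\infty(0,T;C^1(\overline\Gamma))$; the first part of Theorem~\ref{ConvEta} gives statement~1 of the Corollary, and the second part gives statement~2.

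There is no substantial obstacle here; the only point requiring care is that the regularity delivered by Theorem~\ref{ConvEta} stops strictly below $s=1$, so one must check that the embedding threshold $s=3/4$ leaves room — and it does, but only because the elastic interface is $1D$. In a $3D$ fluid / $2D$ shell setting the analogous embedding into $C^1$ would require Sobolev exponent $>2$, i.e.\ $s>1$, which the present compactness argument does not provide; this is exactly the place where the restriction to two spatial dimensions is used, the uniform $C^1$ convergence of $\bdeta_{\Delta t}$ being needed later to pass to the limit in the geometric nonlinearities (the terms $S^\eta$, $\bnu^\eta$, $\btau^\eta$, $J^\eta$) appearing in the weak formulation~\eqref{WeakALEFormula}.
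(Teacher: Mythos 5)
Your proof is correct and follows essentially the same route as the paper: Theorem~\ref{ConvEta} gives strong convergence in $L^\infty(0,T;H^{2s}(\Gamma))$ for every $s<1$, and the one-dimensional Sobolev embedding $H^{2s}(\Gamma)\hookrightarrow C^1(\overline\Gamma)$ for $2s>3/2$ upgrades this to $L^\infty(0,T;C^1(\overline\Gamma))$, with room in the interval $3/4<s<1$ precisely because $\Gamma$ is $1D$ (a point the paper also emphasizes). The only cosmetic difference is that the paper derives statement~1 this way in the text and then refers to an argument from \cite{BorSun} for statement~2, whereas you obtain both statements uniformly by applying the same embedding to the two parts of Theorem~\ref{ConvEta}; that is a cleaner, self-contained presentation of the same idea.
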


The second statement in this corollary can be proved using the same
arguments as those following statement (76) in \cite{BorSun}.

By using this corollary, and the explicit formulas for the normals $\bnu_{\Delta t}$, the tangents $\btau_{\Delta t}$,
and the quantities associated with the ALE mappings ${\bf A}_{\Delta t}$, one can see that the following strong 
convergence results hold:

\begin{corollary}\label{StrongConvergences}
The following strong convergence results hold for the geometric quantities associated with the change of the 
fluid domain $\Omega(t)$:
\begin{enumerate}
\item $\bnu_{\Delta t}\to \bnu^{\eta}\;{\rm in}\; L^{\infty}(0,T;C(\overline{\Gamma}))$,
\item $\btau_{\Delta t}\to \btau^{\eta}\;{\rm in}\; L^{\infty}(0,T;C(\overline{\Gamma}))$,
\item ${\bf w}_{\Delta t}\to {\bf w}^{\eta}\;{\rm in}\; L^{2}(0,T;H^1(\Omega))$,
\item $S_{\Delta t}\to S^{\eta}\;{\rm in}\; L^{\infty}(0,T;C(\overline{\Gamma}))$,
\item $J_{\Delta t}\to J^{\eta}\;{\rm in}\; L^{\infty}(0,T;C(\overline{\Omega}))$,
\item $(\nabla{\bf A}_{\Delta t})^{-1}\to (\nabla{\bf A}_{\eta})^{-1}\;{\rm in}\; L^{\infty}(0,T;C(\overline{\Omega}))$.
\end{enumerate}
\end{corollary}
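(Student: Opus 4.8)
The plan is to read off every convergence in the list from the single input $\bdeta_{\Delta t}\to\bdeta$ in $L^\infty(0,T;C^1(\overline\Gamma))$ supplied by Corollary~\ref{ConvEtaC1}, by writing each geometric quantity either as a continuous function of $\partial_z\bdeta_{\Delta t}$ (the interface quantities $\btau_{\Delta t},S_{\Delta t},\bnu_{\Delta t}$) or as the image of $\bdeta_{\Delta t}$, resp.\ ${\bf v}^*_{\Delta t}$, under the linear harmonic extension operator $\mathcal H$ (the ALE quantities $\nabla{\bf A}_{\Delta t},J_{\Delta t},(\nabla{\bf A}_{\Delta t})^{-1},{\bf w}_{\Delta t}$), and then invoking the continuity of these maps on the relevant spaces.

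First I would treat the interface quantities. Since $\bphi^{\eta_{\Delta t}}={\bf id}+\bdeta_{\Delta t}$, we have $\btau_{\Delta t}=\partial_z\bphi^{\eta_{\Delta t}}=(1+\partial_z\eta_{z,\Delta t},\,\partial_z\eta_{r,\Delta t})$, $S_{\Delta t}=|\btau_{\Delta t}|=\sqrt{(1+\partial_z\eta_{z,\Delta t})^2+(\partial_z\eta_{r,\Delta t})^2}$, and $\bnu_{\Delta t}=\btau_{\Delta t}^{\perp}/|\btau_{\Delta t}|$. The map $\partial_z\bdeta_{\Delta t}\mapsto\btau_{\Delta t}$ is affine and bounded, so $\btau_{\Delta t}\to\btau^{\eta}$ in $L^\infty(0,T;C(\overline\Gamma))$ is immediate. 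For $S_{\Delta t}$ and $\bnu_{\Delta t}$ the only point to check is that $|\btau_{\Delta t}|$ stays bounded away from $0$: by the smallness of $\bdeta_0$ in \eqref{eta0} together with the smallness of $T$ (exactly as in Proposition~\ref{EtaBound}, using $H^{11/6}(\Gamma)\hookrightarrow C^1(\overline\Gamma)$) one has $\|\bdeta^n_{\Delta t}\|_{C^1(\overline\Gamma)}$ uniformly small, hence $1+\partial_z\eta_{z,\Delta t}\ge\tfrac12$ and $S_{\Delta t}\ge\tfrac12$ uniformly in $n$ and $\Delta t$; on that region $x\mapsto\sqrt x$ and $v\mapsto v/|v|$ are Lipschitz, so $S_{\Delta t}\to S^{\eta}$ and $\bnu_{\Delta t}\to\bnu^{\eta}$ in $L^\infty(0,T;C(\overline\Gamma))$.

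Next I would handle the ALE mapping and the quantities built from it. Write ${\bf A}_{\Delta t}(t)={\bf id}+\mathcal H(\bdeta_{\Delta t}(t))$, with $\mathcal H$ the harmonic extension into $\Omega$ with zero data on $\Sigma$; by Grisvard's Theorem 5.1.3.1 \cite{Grisvard2} (cf.\ \eqref{ALEreg1}--\eqref{ALEreg2}) $\mathcal H$ is linear and continuous $W^{5/3,3}(\Gamma)\to W^{2,3}(\Omega)\hookrightarrow C^{1,1/3}(\overline\Omega)$. By Theorem~\ref{ConvEta}, $\bdeta_{\Delta t}\to\bdeta$ in $L^\infty(0,T;H^{2s}(\Gamma))$ for all $s<1$, and since $11/6<2$ this gives convergence in $L^\infty(0,T;H^{11/6}(\Gamma))\hookrightarrow L^\infty(0,T;W^{5/3,3}(\Gamma))$ by \eqref{eta_reg}. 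Applying $\mathcal H$ to the difference yields ${\bf A}_{\Delta t}\to{\bf A}_{\eta}$ in $L^\infty(0,T;C^{1,1/3}(\overline\Omega))$, in particular $\nabla{\bf A}_{\Delta t}\to\nabla{\bf A}_{\eta}$ in $L^\infty(0,T;C(\overline\Omega))$. Since $J_{\Delta t}=\det\nabla{\bf A}_{\Delta t}$ is a polynomial in these entries, $J_{\Delta t}\to J^{\eta}$ in $L^\infty(0,T;C(\overline\Omega))$; and since $(\nabla{\bf A}_{\Delta t})^{-1}=J_{\Delta t}^{-1}\,\mathrm{adj}(\nabla{\bf A}_{\Delta t})$ with $\mathrm{adj}$ polynomial in the entries and $J_{\Delta t}\ge c>0$ uniformly by Proposition~\ref{EtaBound} (which also forces $J^{\eta}\ge c>0$), the quotient converges: $(\nabla{\bf A}_{\Delta t})^{-1}\to(\nabla{\bf A}_{\eta})^{-1}$ in $L^\infty(0,T;C(\overline\Omega))$.

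Finally, for the ALE velocity I would note that by construction ${\bf w}_{\Delta t}(t)={\bf w}^n_{\Delta t}=\mathcal H\big(\tfrac{\bdeta^n_{\Delta t}-\bdeta^{n-1}_{\Delta t}}{\Delta t}\big)=\mathcal H({\bf v}^{n-1/2}_{\Delta t})=\mathcal H({\bf v}^*_{\Delta t}(t))$ on $((n-1)\Delta t,n\Delta t]$, i.e.\ ${\bf w}_{\Delta t}=\mathcal H({\bf v}^*_{\Delta t})$ with no index shift. As $\mathcal H$ is linear and continuous $H^{1/2}(\Gamma)\to H^1(\Omega)$ (estimate \eqref{DomVelRegBis} with $s=1/2$) and ${\bf v}^*_{\Delta t}\to{\bf v}$ in $L^2(0,T;H^{2s}(\Gamma))$ for $s<1/2$ by Corollary~\ref{u_convergence}, hence in $L^2(0,T;H^{1/2}(\Gamma))$, we get ${\bf w}_{\Delta t}\to\mathcal H({\bf v})$ in $L^2(0,T;H^1(\Omega))$; and $\mathcal H({\bf v})=\mathcal H(\partial_t\bdeta)=\tfrac{d}{dt}\mathcal H(\bdeta)={\bf w}^{\eta}$, since $\mathcal H$ acts only in the space variable and ${\bf v}=\partial_t\bdeta$ by \eqref{derivativeeta} and Lemma~\ref{weak_convergence}. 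The main (and essentially only) obstacle is this ALE-mapping step: one must push the \emph{strong} convergence of $\bdeta_{\Delta t}$ — available only up to, but not including, $H^2$ — through Grisvard's elliptic estimate on the polygon to obtain $C^1(\overline\Omega)$-convergence of $\nabla{\bf A}_{\Delta t}$ (here $11/6<2$ is what rescues the argument), and then invoke the uniform lower bound $J_{\Delta t}\ge c>0$, which is precisely why $T$ had to be chosen small, to keep the inverse under control.
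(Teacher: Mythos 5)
Your proof is correct and follows the same route the paper intends: the paper simply asserts that the convergences follow from Corollary~\ref{ConvEtaC1} ``and the explicit formulas'' for the tangents, normals, and ALE quantities, and you have filled in exactly those computations, including the two points the paper leaves implicit (the uniform lower bound $S_{\Delta t},\,J_{\Delta t}\geq c>0$ supplied by the smallness assumptions and Proposition~\ref{EtaBound}, and the fact that $H^{11/6}$-convergence, available from Theorem~\ref{ConvEta} since $11/6<2$, suffices to push through Grisvard's estimate for $\nabla{\bf A}_{\Delta t}$). Your identification ${\bf w}_{\Delta t}=\mathcal H({\bf v}^*_{\Delta t})$ with no index shift is consistent with \eqref{aproxNS} and \eqref{ALEVelDiscrete}, so item 3 is also in order.
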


We remark that the results of Corollary \ref{StrongConvergences} were not necessary in our previous work \cite{BorSun}
because only the normal component of structure displacement was assumed to be non-zero. In the current manuscript both the normal
and tangential structure displacements are considered to be non-zero, which introduces additional complications in tracking
the change in the measure of the interface ``surface'' deformation
that we did not have to deal with before.

\section{The limiting problem}\label{Sec:limiting}
\subsection{Construction of suitable test functions}\label{sec:test_functions}
Now that we have the strong convergence results above, we are ready to show that the limits, as $\Delta t \to 0$, of approximate solutions
satisfy the weak form \eqref{WeakALEFormula} of problem \eqref{NSP}-\eqref{ProblemIC}. Unfortunately, due to the fact that we mapped
our problem defined on the moving domain $\Omega(t)$ onto a fixed, reference domain $\Omega$,
introduces additional difficulties. More precisely, the velocity test functions in the weak formulation of the fluid sub-problem  (\ref{WeakFluid}) 
now depend of $\Delta t$ via their dependence on $\eta^n_{\Delta t}$. This is because of the requirement that
the transformed divergence-free condition $\nabla^{\eta^n_{\Delta t}}\cdot{\bf q} = 0$ must
be satisfied. 
Passing to the limit in the weak formulation of the fluid sub-problem  (\ref{WeakFluid}) when both the test functions and the 
unknown functions depend on $\Delta t$ is tricky, and special care needs to be taken to deal with this issue.

Our strategy is to restrict ourselves to a dense subset, call it ${\mathcal X}^{\eta}(0,T)$, of the space of all test functions ${\mathcal Q}^{\eta}(0,T)$, and for every ${\bf q}\in{\mathcal X}^{\eta}(0,T)$ construct a sequence of test functions for the approximate problems, call them ${\bf q}_{\Delta t}$, such that ${\bf q}_{\Delta t}\to {\bf q}$ in suitable norms. This approach was used in \cite{BorSun} for the FSI problem with the no-slip condition and only radial structural displacements, see also \cite{CDEM,muha2013nonlinear,SunBorMulti}. Since, here the test space is different because of the slip boundary condition, the construction of such test functions is somewhat different.

First let us define the domain which contains all the approximate domains
\begin{equation}\label{Omega_max}
\displaystyle{\Omega_{\rm max}=\bigcup_{\Delta t>0,n\in\N}\Omega^{\eta^n_{\Delta t}}}.
\end{equation}
Notice that Corollary~\ref{ConvEtaC1} implies $\Omega^{\eta}(t)\subset\Omega_{\rm max}$, $t\in [0,T]$, and $\Sigma\subset\partial\Omega_{\rm max}$. We define
\begin{align}
{\mathcal X}_{\rm max} = & \{{\bf r}\in C^1_c([0,T);C^2(\overline{\Omega}_{\rm max}))^2:\nabla\cdot{\bf r}=0,
{\bf r}\cdot\btau=0,\; {\rm on}\ \Gamma_i,\;i\in I,\;{\bf r}=0\ {\rm on}\ \Gamma_i,\;i\in II,\;  
\nonumber \\
& \ \ {\bf r}\cdot\bnu=0,\; {\rm on}\ \Gamma_i,\;i\in III\cup IV \}.
\nonumber 
\\
{\mathcal X}^{\eta}(0,T)= & \{({\bf q},\bpsi):{\bf q}(t,.)={\bf r}(t,.)_{|\Omega^{\eta}(t)}\circ{\bf A}_{\eta}(t),{\bf r}\in{\mathcal X}_{\rm max},\;({\bf r}_{|\Gamma^{\eta}}-\bpsi)\cdot\bnu^{\eta}=0,\;\bpsi\in H^2_0(\Gamma)\}.
\nonumber 
\end{align}
From the construction it is immediate that ${\mathcal X}^{\eta}(0,T)$ is dense in ${\mathcal Q}^{\eta}(0,T)$. 

We now want to construct the test functions ${\bf q}_{\Delta t}$ and ${\bpsi}_{\Delta t}$ for the approximate problems, such that 
${\bf q}_{\Delta t} \to {\bf q}$ and ${\bpsi}_{\Delta t} \to \bpsi$ in a suitable space. 
For this purpose let us fix $({\bf q},\bpsi)\in{\mathcal X}^{\eta}(0,T)$, ${\bf q}(t,.)={\bf r}(t,.)_{|\Omega^{\eta}(t)}\circ{\bf A}_{\eta}(t),\;{\bf r}\in{\mathcal X}_{\rm max}$, and define $({\bf q}_{\Delta t},{\bpsi}_{\Delta t})$ to be piece-wise constant in time so that:
\begin{equation}\label{DefTestF}
\left . 
\begin{array}{c}
{\bf q}_{\Delta t}(t,.)={\bf q}^n_{\Delta t}:={\bf r}(n\Delta t,.)_{|\Omega^{\eta_{\Delta t}}(t)}\circ{\bf A}^n_{\Delta t}(t),
\\ \\
\bpsi_{\Delta t}(t)=\bpsi^n_{\Delta t}:= \bpsi(n\Delta t),
\end{array}\right \},\; t\in ((n-1)\Delta t,n\Delta t].
\end{equation}
Note that $({\bf q}_{\Delta t}(t,.),\bpsi_{\Delta t}(t,.))\in{\mathcal V}^n_{\Delta t}$, $t\in ((n-1)\Delta t,n\Delta t]$. Now, using ideas from \cite{BorSun} we can prove the following lemma.
\begin{lemma}\label{TestFunctionConv}
For every $({\bf q},\bpsi)\in{\mathcal X}^{\eta}(0,T)$ we have
$$
({\bf q}_{\Delta t},\bpsi_{\Delta t})\to ({\bf q},\bpsi)\;{\rm in}\; L^{\infty}(0,T;C^1(\overline{\Omega}))^2\times L^{\infty}(0,T;C^1(\overline{\Gamma}))^2.
$$
\end{lemma}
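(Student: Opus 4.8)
The plan is to reduce the statement to two independent convergences and to estimate each of them by the chain rule together with the uniform bounds and strong convergences already at our disposal.

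\emph{The structure test functions.} First I would dispatch the $\bpsi$-component. Since $(\mathbf q,\bpsi)\in\mathcal X^{\eta}(0,T)\subset\mathcal Q^{\eta}(0,T)$, the map $t\mapsto\bpsi(t)$ belongs to $C^1_c([0,T);H^2_0(\Gamma)^2)$ and is in particular uniformly continuous on $[0,T]$ with values in $H^2_0(\Gamma)^2$. Its piecewise-constant-in-time interpolant $\bpsi_{\Delta t}(t)=\bpsi(n\Delta t)$ for $t\in((n-1)\Delta t,n\Delta t]$ then satisfies $\sup_{t}\|\bpsi_{\Delta t}(t)-\bpsi(t)\|_{H^2_0(\Gamma)}\le\omega_{\bpsi}(\Delta t)\to 0$, where $\omega_{\bpsi}$ is the modulus of continuity. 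Because $\Gamma=(0,L)$ is one-dimensional, $H^2_0(\Gamma)\hookrightarrow C^1(\overline{\Gamma})$, so $\bpsi_{\Delta t}\to\bpsi$ in $L^{\infty}(0,T;C^1(\overline{\Gamma}))^2$.

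\emph{The fluid test functions.} For the $\mathbf q$-component, recall $\mathbf q(t,\cdot)=\mathbf r(t,\cdot)_{|\Omega^{\eta}(t)}\circ\mathbf A_{\eta}(t)$ and $\mathbf q_{\Delta t}(t,\cdot)=\mathbf r(n\Delta t,\cdot)_{|\Omega^{\eta_{\Delta t}}(t)}\circ\mathbf A^n_{\Delta t}(t)$, with $\mathbf r\in\mathcal X_{\rm max}\subset C^1_c([0,T);C^2(\overline{\Omega}_{\rm max}))^2$; without loss of generality I would extend $\mathbf r$ to a compactly supported $C^2$ function on $\R^2$, which is harmless. For $t\in((n-1)\Delta t,n\Delta t]$ I would write
\[
\mathbf q_{\Delta t}(t)-\mathbf q(t)=\underbrace{\big(\mathbf r(n\Delta t)-\mathbf r(t)\big)\circ\mathbf A^n_{\Delta t}(t)}_{=:I_{\Delta t}(t)}\;+\;\underbrace{\mathbf r(t)\circ\mathbf A^n_{\Delta t}(t)-\mathbf r(t)\circ\mathbf A_{\eta}(t)}_{=:II_{\Delta t}(t)} .
\]
For $I_{\Delta t}$ I would use the chain-rule bound $\|f\circ\mathbf A\|_{C^1(\overline{\Omega})}\le\|f\|_{C^1}(1+\|\nabla\mathbf A\|_{C^0(\overline{\Omega})})$ together with the uniform estimate $\|\mathbf A^n_{\Delta t}\|_{C^{1,1/3}(\overline{\Omega})}\le C\|\bdeta^n_{\Delta t}\|_{H^{11/6}(\Gamma)}\le C$ coming from Proposition~\ref{EtaBound} and \eqref{ALEreg2}; since $|n\Delta t-t|\le\Delta t$ and $\mathbf r$ is uniformly continuous into $C^1$, this gives $\|I_{\Delta t}\|_{L^{\infty}(0,T;C^1(\overline{\Omega}))}\le C\,\omega_{\mathbf r}(\Delta t)\to 0$. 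For $II_{\Delta t}$ I would apply the fundamental theorem of calculus along segments in the first argument and the chain rule: its $C^0$-norm is $\le\|\nabla\mathbf r\|_{C^0}\|\mathbf A^n_{\Delta t}-\mathbf A_{\eta}\|_{C^0}$, and after adding and subtracting $(\nabla\mathbf r(t)\circ\mathbf A^n_{\Delta t})\nabla\mathbf A_{\eta}$, its gradient is controlled by $\|\nabla\mathbf r\|_{C^0}\,\|\nabla\mathbf A^n_{\Delta t}-\nabla\mathbf A_{\eta}\|_{C^0}+\|\nabla^2\mathbf r\|_{C^0}\,\|\mathbf A^n_{\Delta t}-\mathbf A_{\eta}\|_{C^0}\,\|\nabla\mathbf A_{\eta}\|_{C^0}$. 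Here one uses the $C^2$-regularity of $\mathbf r$ and the uniform bound $\|\mathbf A_{\eta}\|_{L^{\infty}(0,T;C^{1,1/3})}<\infty$ from \eqref{ALEreg4}, so $\|II_{\Delta t}\|_{L^{\infty}(0,T;C^1(\overline{\Omega}))}\to 0$ provided $\mathbf A^n_{\Delta t}\to\mathbf A_{\eta}$ in $L^{\infty}(0,T;C^1(\overline{\Omega}))$.

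\emph{Convergence of the ALE maps, and the main obstacle.} It remains to justify $\mathbf A_{\Delta t}\to\mathbf A_{\eta}$ in $L^{\infty}(0,T;C^1(\overline{\Omega}))$; note that $\mathbf A^n_{\Delta t}(t)=\mathbf A_{\Delta t}(t)$ on $((n-1)\Delta t,n\Delta t]$, so no time shift enters. By linearity of the harmonic-extension problem \eqref{ALEapp}, $\mathbf A_{\Delta t}(t)-\mathbf A_{\eta}(t)$ is the harmonic extension of $\bdeta_{\Delta t}(t)-\bdeta(t)$ on $\Gamma$ with zero data on $\Sigma$, so \eqref{ALEreg1}--\eqref{ALEreg2} give $\|\mathbf A_{\Delta t}(t)-\mathbf A_{\eta}(t)\|_{C^{1,1/3}(\overline{\Omega})}\le C\|\bdeta_{\Delta t}(t)-\bdeta(t)\|_{H^{11/6}(\Gamma)}$; taking the supremum in $t$ and invoking Theorem~\ref{ConvEta}(1) with $2s$ chosen so that $11/6<2s<2$ (equivalently, Corollary~\ref{ConvEtaC1} and Corollary~\ref{StrongConvergences}) yields the desired convergence. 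Combining the three steps proves the lemma. There is no deep difficulty; the one point that requires care is the bookkeeping in $II_{\Delta t}$: controlling the $C^1(\overline{\Omega})$-norm of the composition difference forces the use of the \emph{second} spatial derivatives of $\mathbf r$, which is precisely why $\mathcal X_{\rm max}$ was defined with $C^2$ rather than merely $C^1$ spatial regularity.
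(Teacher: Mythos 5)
The paper itself gives no proof here, deferring to ``ideas from [BorSun]'' without spelling them out; your argument correctly fills that gap and is the natural adaptation of the [BorSun] construction to the present setting (harmonic-extension ALE map, both displacement components nonzero). The decomposition $\mathbf{q}_{\Delta t}-\mathbf{q}=I_{\Delta t}+II_{\Delta t}$ into a time shift of $\mathbf{r}$ composed with the discrete ALE map and a composition difference, with $I_{\Delta t}$ handled by uniform continuity of $\mathbf{r}$ plus the uniform $C^{1,1/3}$ bound on $\mathbf{A}^n_{\Delta t}$ from Proposition~\ref{EtaBound} and \eqref{ALEreg2}, and $II_{\Delta t}$ handled by the chain rule plus the mean value theorem (for which the $C^2$ spatial regularity built into $\mathcal{X}_{\rm max}$ is exactly what is needed), is precisely the right bookkeeping. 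Your final step — deducing $\mathbf{A}_{\Delta t}\to\mathbf{A}_\eta$ in $L^\infty(0,T;C^1(\overline\Omega))$ by linearity of the harmonic-extension problem \eqref{ALEapp}/\eqref{ALEDef} applied to $\bdeta_{\Delta t}-\bdeta$, then Grisvard plus Sobolev embedding and Theorem~\ref{ConvEta}(1) with $s=11/12$ — is correct and is, in effect, what underlies Corollary~\ref{StrongConvergences}(6), so the two routes agree. The $\bpsi$-component is exactly as you say: piecewise-constant sampling of a $C^1_c([0,T);H^2_0(\Gamma))$ function converges uniformly, and $H^2_0(0,L)\hookrightarrow C^1(\overline\Gamma)$ because $\Gamma$ is one-dimensional. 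No gaps.
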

We will also need information about the convergence of approximations of $\partial_t{\bf q}$, which we define by:
\begin{equation}\label{dq}
d{\bf q}_{\Delta t}(t,.):=\frac{{\bf q}^{n+2}_{\Delta t}-{\bf q}^{n+1}_{\Delta t}}{\Delta t},\quad t\in((n-1)\Delta t,n\Delta t].
\end{equation}
\begin{lemma}\label{ApproxDer}
Let $({\bf q},\bpsi)\in{\mathcal X}^{\eta}(0,T)$, and let $d{\bf q}_{\Delta t}$ be defined by \eqref{dq}.
Then $d{\bf q}_{\Delta t}\to\partial_t{\bf q}$ in $L^2(0,T;L^2(\Omega))$.
\end{lemma}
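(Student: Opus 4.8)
The plan is to reproduce, at the level of the finite differences in \eqref{dq}, the chain-rule decomposition of the ALE derivative of ${\bf q}$. Writing ${\bf q}(t,\cdot)={\bf r}(t,\cdot)\circ{\bf A}_\eta(t)$ for the fixed ${\bf r}\in{\mathcal X}_{\rm max}$ and recalling ${\bf w}^\eta=\partial_t{\bf A}_\eta$, one has
$$
\partial_t{\bf q}=(\partial_t{\bf r})\circ{\bf A}_\eta+\big((\nabla{\bf r})\circ{\bf A}_\eta\big)\,{\bf w}^\eta ,
$$
so it suffices to match each of these two terms in the limit. For $t\in((n-1)\Delta t,n\Delta t]$, using \eqref{DefTestF}, I would add and subtract ${\bf r}((n+1)\Delta t)\circ{\bf A}^{n+2}_{\Delta t}$ and split $d{\bf q}_{\Delta t}={\bf I}_{\Delta t}+{\bf II}_{\Delta t}$, where
$$
{\bf I}_{\Delta t}=\Big(\tfrac{{\bf r}((n+2)\Delta t)-{\bf r}((n+1)\Delta t)}{\Delta t}\Big)\circ{\bf A}^{n+2}_{\Delta t},\qquad
{\bf II}_{\Delta t}=\tfrac{{\bf r}((n+1)\Delta t)\circ{\bf A}^{n+2}_{\Delta t}-{\bf r}((n+1)\Delta t)\circ{\bf A}^{n+1}_{\Delta t}}{\Delta t}.
$$

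For ${\bf I}_{\Delta t}$: since ${\bf r}\in C^1_c([0,T);C^2(\overline\Omega_{\rm max}))$, the difference quotient of ${\bf r}$ in time converges to $\partial_t{\bf r}$ uniformly in $C^1(\overline\Omega_{\rm max})$ and is uniformly bounded there. I would then argue, exactly as in Corollaries~\ref{ConvEtaC1} and~\ref{StrongConvergences} (using the linearity and continuity \eqref{ALEreg2} of the harmonic-extension operator together with the H\"older-in-time bound for $\bdeta_{\Delta t}$ behind Theorem~\ref{ConvEta}), that the forward shifts $t\mapsto{\bf A}^{n+1}_{\Delta t},{\bf A}^{n+2}_{\Delta t}$ converge to ${\bf A}_\eta$ in $L^\infty(0,T;C^1(\overline\Omega))$ just as $T_{\Delta t}{\bf A}_{\Delta t}$ does, the index shift by a bounded multiple of $\Delta t$ being harmless. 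Composition of a uniformly convergent sequence with a uniformly convergent family of $C^1$ maps then gives ${\bf I}_{\Delta t}\to(\partial_t{\bf r})\circ{\bf A}_\eta$ in $L^\infty(0,T;C(\overline\Omega))$, hence in $L^2(0,T;L^2(\Omega))$.

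For ${\bf II}_{\Delta t}$: by the fundamental theorem of calculus applied to $s\mapsto{\bf r}\big((n+1)\Delta t,\,(1-s){\bf A}^{n+1}_{\Delta t}+s{\bf A}^{n+2}_{\Delta t}\big)$ together with the identity ${\bf A}^{n+2}_{\Delta t}-{\bf A}^{n+1}_{\Delta t}=\Delta t\,{\bf w}^{n+2}_{\Delta t}$ from \eqref{ALEVelDiscrete},
$$
{\bf II}_{\Delta t}=\Big(\int_0^1(\nabla{\bf r})\big((n+1)\Delta t,\,(1-s){\bf A}^{n+1}_{\Delta t}+s{\bf A}^{n+2}_{\Delta t}\big)\,ds\Big)\,{\bf w}^{n+2}_{\Delta t}.
$$
The argument of $\nabla{\bf r}$ is a convex combination of two shifts of ${\bf A}_{\Delta t}$, both converging uniformly to ${\bf A}_\eta$ by the previous step; since $\nabla{\bf r}$ is continuous and compactly supported in time, the bracketed matrix converges to $(\nabla{\bf r})\circ{\bf A}_\eta$ in $L^\infty(0,T;C(\overline\Omega))$ and is uniformly bounded. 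The factor ${\bf w}^{n+2}_{\Delta t}$ is a forward shift of ${\bf w}_{\Delta t}$, and ${\bf w}_{\Delta t}\to{\bf w}^\eta$ strongly in $L^2(0,T;H^1(\Omega))$ by Corollary~\ref{StrongConvergences}, the same limit holding for the shift as $\Delta t\to0$. A uniformly bounded, uniformly convergent factor times an $L^2$-strongly convergent factor converges strongly in $L^2$, so ${\bf II}_{\Delta t}\to\big((\nabla{\bf r})\circ{\bf A}_\eta\big){\bf w}^\eta$ in $L^2(0,T;L^2(\Omega))$. Adding the two limits yields $d{\bf q}_{\Delta t}\to\partial_t{\bf q}$ in $L^2(0,T;L^2(\Omega))$.

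I expect the main obstacle to be the analysis of ${\bf II}_{\Delta t}$: one must see that the difference quotient of the composition ${\bf r}\circ{\bf A}_{\Delta t}$ really behaves like $\big(\nabla{\bf r}\circ{\bf A}_{\Delta t}\big){\bf w}_{\Delta t}$, which forces the product to be split into a ``good'' factor (the matrix $\nabla{\bf r}$ evaluated along the ALE trajectory, which converges \emph{uniformly} thanks to the $C^1$-in-space control on the maps and their nearby time-shifts) and a factor that enjoys only strong $L^2(0,T;H^1(\Omega))$ convergence, not uniform convergence. Keeping the bookkeeping of the index shifts by $\Delta t$ and $2\Delta t$ under control — legitimate by the uniform H\"older-in-time bounds established earlier — is a further point that must be made explicit.
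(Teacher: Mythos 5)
Your proposal reproduces the paper's argument almost exactly: you make the same add‑and‑subtract decomposition of $d{\bf q}_{\Delta t}$ into a time–difference quotient of ${\bf r}$ evaluated along ${\bf A}^{n+2}_{\Delta t}$ and a spatial increment of ${\bf r}((n+1)\Delta t,\cdot)$ between ${\bf A}^{n+1}_{\Delta t}$ and ${\bf A}^{n+2}_{\Delta t}$, then identify $({\bf A}^{n+2}_{\Delta t}-{\bf A}^{n+1}_{\Delta t})/\Delta t = {\bf w}^{n+2}_{\Delta t}$ and pass to the limit using the uniform convergence of the shifted ALE maps and the strong $L^2$ convergence of the shifted ALE velocities. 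The only genuine difference is cosmetic: the paper applies the pointwise Mean Value Theorem, producing intermediate points $\beta,\gamma\in[0,1]$ depending on $(t,r,z)$, whereas you use the integral form of the chain rule, $\int_0^1 (\nabla{\bf r})\big((n+1)\Delta t,(1-s){\bf A}^{n+1}_{\Delta t}+s{\bf A}^{n+2}_{\Delta t}\big)\,ds$. Your variant is marginally cleaner, since it sidesteps any measurability concerns about the MVT intermediate points and makes the ``uniformly bounded and uniformly convergent factor times $L^2$‑strongly convergent factor'' structure explicit; the paper leaves those convergence details compressed but relies on the same facts (Corollary~\ref{StrongConvergences} for ${\bf w}_{\Delta t}$, Corollary~\ref{ConvEtaC1} and the H\"older bounds behind Theorem~\ref{ConvEta} for the ALE maps and their nearby time shifts). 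Both arguments are correct and interchangeable.
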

\proof
By the Mean-Value Theorem we have
\begin{align}
\frac{{\bf q}^{n+2}_{\Delta t}-{\bf q}^{n+1}_{\Delta t}}{\Delta t}&=\frac{1}{\Delta t}\Big ({\bf r}\big ((n+2)\Delta t,{\bf A}^{n+2}_{\Delta t}(r,z)\big )-\big ((n+1)\Delta t,{\bf A}^{n+1}_{\Delta t}(r,z)\big )\Big )
\nonumber
\\
&=\frac{1}{\Delta t}\Big ({\bf r}\big ((n+2)\Delta t,{\bf A}^{n+2}_{\Delta t}(r,z)\big )-{\bf r}\big ((n+1)\Delta t,{\bf A}^{n+2}_{\Delta t}(r,z)\big )+
\nonumber
\\
&\qquad \quad \ {\bf r}\big ((n+1)\Delta t,{\bf A}^{n+2}_{\Delta t}(r,z)\big )-\big ((n+1)\Delta t,{\bf A}^{n+1}_{\Delta t}(r,z)\big )\Big )
\nonumber
\\
&=\partial_t{\bf r}\big ((n+1+\beta)\Delta t,{\bf A}^{n+2}_{\Delta t}(r,z)\big ))+\nabla{\bf r}\big ((n+1)\Delta t,\boldsymbol\zeta\big )\frac{{\bf A}^{n+2}_{\Delta t}(r,z)-{\bf A}^{n+1}_{\Delta t}(r,z)}{\Delta t},
\nonumber
\end{align}
where $\boldsymbol\zeta={\bf A}^{n+1}_{\Delta t}(r,z)+\gamma({\bf A}^{n+2}_{\Delta t}(r,z)-{\bf A}^{n+1}_{\Delta t}(r,z))$, $\beta,\gamma\in [0,1]$. Notice that $\displaystyle{\frac{{\bf A}^{n+2}_{\Delta t}-{\bf A}^{n+1}_{\Delta t}}{\Delta t}={\bf w}^{n+2}_{\Delta t}}$, 
and this term is associated with $T_{-\Delta t}{\bf w}_{\Delta t}$, which converges strongly to ${\bf w}^{\eta}$ in $L^2(0,T;L^2(\Omega))$.
Therefore we have
$$
d{\bf q}_{\Delta t}\to \partial_t {\bf r}+\nabla{\bf r}\cdot{\bf w}^{\eta}=\partial_t{\bf q}\; {\rm in}\; L^2(0,T;L^2(\Omega)).
$$
\qed
\subsection{Approximate equations}
We have so far introduced the weak formulation of the coupled problem at the continuous level, and have split the coupled problem
into the fluid and structure sub-problems. We then semi-discretized the two sub-problems, and introduced the semi-discrete weak formulations
of those approximate fluid and structure sub-problems. 
What needs to be done next is to define the semi-discrete weak formulation of the  {\sl coupled problem} which approximates
the weak formulation of the coupled continuous problem. 
To do that we take the constructed approximate test functions $({\bf q}_{\Delta t},\bpsi_{\Delta t})$, multiply them by $\Delta t$,
and replace the test functions ${\bf q}$ and $\bpsi$ in the weak formulations
for the approximate structure and fluid sub-problems ~\eqref{StructureWeak} and~\eqref{WeakFluid}
with the test functions $\Delta t({\bf q}_{\Delta t},\bpsi_{\Delta t})$.
We add the two weak formulations together, and sum w.r.t. $n=0,\dots,N-1$. 
The approximating solutions $({\bf u}_{\Delta t},\bdeta_{\Delta t})$ satisfy the following variational form of the 
semi-discretized (approximate) coupled problem:

\begin{equation}\label{ApproxEq}
\begin{array}{c}
\displaystyle{\rho_F\int_0^T\int_{\Omega}T_{\Delta t}J_{\Delta t}\partial_t\tilde{\bf u}_{\Delta t}\cdot{\bf q}_{\Delta t}
+\frac{\rho_F}{2}\int_0^T\int_{\Omega}\frac{J_{\Delta t}-T_{\Delta t}J_{\Delta t}}{\Delta t}{\bf u}_{\Delta t}\cdot{\bf q}_{\Delta t}}
\\ \\
+\displaystyle{\frac{\rho_F}{2}\int_0^T\int_{\Omega}J_{\Delta t}\Big (((T_{\Delta t}{\bf u}_{\Delta t}-{\bf w}_{\Delta t})\cdot\nabla^{\eta_{\Delta t}}){\bf u}_{\Delta t}\cdot {\bf q}_{\Delta t}}
\displaystyle{-((T_{\Delta t}{\bf u}_{\Delta t}-{\bf w}_{\Delta t})\cdot\nabla^{\eta_{\Delta t}}){\bf q}_{\Delta t}\cdot {\bf u}_{\Delta t}\Big)}
\\ \\
\displaystyle{+2\mu\int_0^T\int_{\Omega}J_{\Delta t}{\bf D}^{\eta_{\Delta t}}({\bf u}_{\Delta t}):{\bf D}^{\eta_{\Delta t}}({\bf q}_{\Delta t})+\frac{1}{\alpha}\int_0^T\int_{\Gamma}(({\bf u}_{\Delta t}-{\bf v}_{\Delta t})\cdot\btau_{\Delta t})({\bf q}_{\Delta t}\cdot\btau_{\Delta t})S_{\Delta t}dz}
\\ \\
\displaystyle{+\frac{1}{\alpha}\int_0^T\int_{\Gamma}(({\bf v}_{\Delta t}-{\bf u}_{\Delta t})\cdot\btau_{\Delta t})({\bpsi}_{\Delta t}\cdot\btau_{\Delta t})S_{\Delta t}dz }
\displaystyle{+\sum_{i\in III}\int_{\Gamma_i}\frac{1}{\alpha_i}({\bf u}_{\Delta t}\cdot\btau)({\bf q}_{\Delta t}\cdot\btau)}
\\ \\
+\displaystyle{\rho_S h\int_0^T\int_{\Gamma} \partial_t\tilde{\bf v}_{\Delta t}\bpsi_{\Delta t}}
+\langle {\cal L}_e \bdeta_{\Delta t},\bpsi_{\Delta t}\rangle
=\langle {\bf R}_{\Delta t},{\bf q}_{\Delta t}\rangle,\quad ({\bf q},\bpsi)\in{\mathcal X}^\eta(0,T).
\end{array}
\end{equation}

We want to pass to the limit as $\Delta t \to 0$ and show that the limiting functions satisfy the weak formulation of problem \eqref{NSP}-\eqref{ProblemIC}, given 
in \eqref{WeakALEFormula}. Indeed, by using the convergence results for the approximate solutions given by 
Theorem~\ref{compactness}, and Corollaries~\ref{ConvEtaC1} and \ref{StrongConvergences}, and by using the convergence results
for the corresponding test functions, given by Lemma~\ref{TestFunctionConv},  we can pass to the limit directly in all the terms, expect the ones associated with the geometric conservation law of the ALE mapping, i.e. the first two terms in the first line in~\eqref{ApproxEq}.

\subsection{Discrete v.s. continuous geometric conservation law}\label{Sec:GSL}
We show that the terms associated with the semi-discrete approximation of the geometric conservation law associated with our family of ALE mappings
$A_{\Delta t}^n$, 
converge, as $\Delta t \to 0$, to the corresponding terms 
associates with the geometric conservation law satisfied by the ALE mapping $A^\eta$
appearing in the continuous weak formulation \eqref{WeakALEFormula}. More precisely, we have the following result.

\begin{proposition}\label{ConvJDer}
For every $({\bf q},\bpsi)\in {\mathcal X}^{\eta}(0,T)$ the following convergence result holds
$$
\rho_F\int_0^T\int_{\Omega}T_{\Delta t}J_{\Delta t}\partial_t\tilde{\bf u}_{\Delta t}\cdot{\bf q}_{\Delta t}+\frac{\rho_F}{2}\int_0^T\int_{\Omega}\frac{J_{\Delta t}-T_{\Delta t}J_{\Delta t}}{\Delta t}{\bf u}_{\Delta t}\cdot{\bf q}_{\Delta t}
\to
$$
$$
- \rho_F\int_0^T\int_{\Omega}J^{\eta}{\bf u}\cdot\partial_t{\bf q}
-\frac{\rho_F}{2}\int_0^T \int_{\Omega}J^{\eta}(\nabla^{\eta}\cdot{\bf w}^{\eta}){\bf u}\cdot{\bf q}-\int_{\Omega}J_0{\bf u}_0{\bf q}(0,.).
$$
\end{proposition}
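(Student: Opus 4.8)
The plan is to turn the left-hand side into a finite sum over the time levels, perform a discrete integration by parts (Abel summation) in the index $n$, and then pass to the limit term by term, using the strong convergences of Section~\ref{Sec:strong} together with Lemmas~\ref{TestFunctionConv} and~\ref{ApproxDer}.

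\textbf{Step 1: reduction to sums and algebraic rearrangement.} Using the definitions of $T_{\Delta t}$, of $\tilde{\bf u}_{\Delta t}$ and of ${\bf q}_{\Delta t}$, on each sub-interval $(n\Delta t,(n+1)\Delta t]$ one has $T_{\Delta t}J_{\Delta t}=J^n_{\Delta t}$, $\partial_t\tilde{\bf u}_{\Delta t}=({\bf u}^{n+1}_{\Delta t}-{\bf u}^n_{\Delta t})/\Delta t$, $J_{\Delta t}=J^{n+1}_{\Delta t}$, ${\bf u}_{\Delta t}={\bf u}^{n+1}_{\Delta t}$ and ${\bf q}_{\Delta t}={\bf q}^{n+1}_{\Delta t}$, so that the left-hand side equals
\[
I_1+I_2:=\rho_F\sum_{n=0}^{N-1}\int_\Omega J^n_{\Delta t}({\bf u}^{n+1}_{\Delta t}-{\bf u}^n_{\Delta t})\cdot{\bf q}^{n+1}_{\Delta t}
+\frac{\rho_F}{2}\sum_{n=0}^{N-1}\int_\Omega (J^{n+1}_{\Delta t}-J^n_{\Delta t})\,{\bf u}^{n+1}_{\Delta t}\cdot{\bf q}^{n+1}_{\Delta t}.
\]
Writing $J^n({\bf u}^{n+1}-{\bf u}^n)=(J^{n+1}{\bf u}^{n+1}-J^n{\bf u}^n)-(J^{n+1}-J^n){\bf u}^{n+1}$ in the first sum one gets $I_1+I_2=S_1-I_2$, where $S_1:=\rho_F\sum_{n=0}^{N-1}\int_\Omega(J^{n+1}_{\Delta t}{\bf u}^{n+1}_{\Delta t}-J^n_{\Delta t}{\bf u}^n_{\Delta t})\cdot{\bf q}^{n+1}_{\Delta t}$.

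\textbf{Step 2: the term $S_1$.} Abel summation, together with the fact that ${\bf q}^N_{\Delta t}=0$ because ${\bf r}$ is supported in $[0,T)$, yields
\[
S_1=-\rho_F\int_\Omega J_0{\bf u}_0\cdot{\bf q}^1_{\Delta t}
-\rho_F\sum_{n=1}^{N-1}\int_\Omega J^n_{\Delta t}{\bf u}^n_{\Delta t}\cdot({\bf q}^{n+1}_{\Delta t}-{\bf q}^n_{\Delta t}).
\]
By Lemma~\ref{TestFunctionConv}, ${\bf q}^1_{\Delta t}\to{\bf q}(0)$ uniformly, so the first term converges to the initial-data term $-\rho_F\int_\Omega J_0{\bf u}_0\cdot{\bf q}(0)$. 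After matching indices, the second sum equals $\rho_F\int_0^T\int_\Omega T_{-\Delta t}(J_{\Delta t}{\bf u}_{\Delta t})\cdot d{\bf q}_{\Delta t}$, with $d{\bf q}_{\Delta t}$ as in \eqref{dq}, up to boundary contributions supported near $t=T$ which vanish since ${\bf r}(t,\cdot)\to{\bf r}(T,\cdot)=0$. Now $J_{\Delta t}{\bf u}_{\Delta t}\to J^\eta{\bf u}$ strongly in $L^2(0,T;L^2(\Omega))$ by Corollaries~\ref{u_convergence} and~\ref{StrongConvergences}, the shift $T_{-\Delta t}$ does not change the limit (Lemma~\ref{Translations} with $h=\Delta t$), and $d{\bf q}_{\Delta t}\to\partial_t{\bf q}$ strongly in $L^2(0,T;L^2(\Omega))$ by Lemma~\ref{ApproxDer}. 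Hence $S_1\to-\rho_F\int_\Omega J_0{\bf u}_0\cdot{\bf q}(0)-\rho_F\int_0^T\int_\Omega J^\eta{\bf u}\cdot\partial_t{\bf q}$.

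\textbf{Step 3: the term $-I_2$.} Since in two dimensions $\det(A+B)=\det A+\mathrm{cof}(A):B+\det B$, applying this with $A=\nabla T_{\Delta t}{\bf A}_{\Delta t}$ and $A+B=\nabla{\bf A}_{\Delta t}$ (so $B=\Delta t\,\nabla{\bf w}_{\Delta t}$) gives, on each sub-interval, the exact identity
\[
\frac{J_{\Delta t}-T_{\Delta t}J_{\Delta t}}{\Delta t}=\mathrm{cof}(\nabla T_{\Delta t}{\bf A}_{\Delta t}):\nabla{\bf w}_{\Delta t}+\Delta t\,\det\nabla{\bf w}_{\Delta t}.
\]
Using $\mathrm{cof}(M):B=(\det M)\,\mathrm{tr}(M^{-1}B)$ and \eqref{nablaeta}, the first term converges strongly in $L^2(0,T;L^2(\Omega))$ to $J^\eta\,\nabla^\eta\!\cdot{\bf w}^\eta$ by Corollary~\ref{StrongConvergences} (strong convergence of the ALE quantities in $L^\infty(0,T;C(\overline\Omega))$ and of $\nabla{\bf w}_{\Delta t}$ in $L^2(0,T;L^2(\Omega))$, the precise time level in the cofactor being irrelevant in the limit); paired with ${\bf u}_{\Delta t}\cdot{\bf q}_{\Delta t}\to{\bf u}\cdot{\bf q}$ strongly in $L^2(0,T;L^2(\Omega))$ this produces $-\tfrac{\rho_F}{2}\int_0^T\int_\Omega J^\eta(\nabla^\eta\!\cdot{\bf w}^\eta)\,{\bf u}\cdot{\bf q}$. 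It remains to check that the remainder $\tfrac{\rho_F}{2}\int_0^T\int_\Omega\Delta t\,\det(\nabla{\bf w}_{\Delta t})\,{\bf u}_{\Delta t}\cdot{\bf q}_{\Delta t}\to0$: using $|\det\nabla{\bf w}_{\Delta t}|\le|\nabla{\bf w}_{\Delta t}|^2$, the uniform $L^\infty$ bound on ${\bf q}_{\Delta t}$, the $2$D Ladyzhenskaya interpolation $L^\infty(0,T;L^2)\cap L^2(0,T;H^1)\hookrightarrow L^4(0,T;L^4)$ for ${\bf u}_{\Delta t}$, and the fact that ${\bf w}_{\Delta t}(t)$ is the harmonic extension of ${\bf v}^*_{\Delta t}(t)$ — whence, by Proposition~\ref{AdditionalReg}, standard elliptic regularity of the harmonic extension on the polygonal domain $\Omega$, and interpolation with $L^\infty(0,T;H^{1/2}(\Omega))$, ${\bf w}_{\Delta t}$ is bounded in $L^{8/3}(0,T;W^{1,8/3}(\Omega))$ — Hölder's inequality bounds this integral by $C\Delta t\to0$. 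Combining Steps~2 and~3 through $I_1+I_2=S_1-I_2$ gives the asserted limit. The delicate point is this last remainder estimate: it is the one place where the two-dimensional structure is genuinely used (through the $L^4(L^4)$ bound for ${\bf u}_{\Delta t}$ and the sub-$W^{1,4}$ integrability of $\nabla{\bf w}_{\Delta t}$), the rest being the algebra of Step~1 and the careful index bookkeeping in the Abel summation of Step~2.
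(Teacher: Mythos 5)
Your Steps~1 and~2 (the algebraic rearrangement and the Abel summation, including identifying the initial-data term and the shifted discrete derivative $d{\bf q}_{\Delta t}$) are correct and match the paper's computation. Step~3, however, is a genuinely different route. The paper applies the mean value theorem to $M\mapsto\det M$, obtaining the \emph{exact} representation $J^{n+1}_{\Delta t}-J^n_{\Delta t}=J^{n,\beta}_{\Delta t}\Delta t\,(\nabla^{n\beta}\!\cdot{\bf w}^{n+1}_{\Delta t})$ with an intermediate $\beta\in[0,1]$, leaving \emph{no remainder}: the factors at the intermediate point are convex combinations of quantities that converge uniformly by Corollary~\ref{StrongConvergences}, so passing to the limit is immediate. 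Your two-dimensional identity $\det(A+B)=\det A+\mathrm{cof}(A):B+\det B$ is also exact, agrees with the paper's leading term through $\mathrm{cof}(M):B=(\det M)\,\mathrm{tr}(M^{-1}B)$, but it carries the extra term $\Delta t\,\det\nabla{\bf w}_{\Delta t}$ which must then be shown to vanish.

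This is where there is a genuine gap. The chain ``Proposition~\ref{AdditionalReg} $\Rightarrow$ elliptic regularity $\Rightarrow$ interpolation with $L^\infty(0,T;H^{1/2}(\Omega))$'' does not yield the claimed bound in $L^{8/3}(0,T;W^{1,8/3}(\Omega))$. Proposition~\ref{AdditionalReg} gives ${\bf v}^*_{\Delta t}$ bounded in $L^2(0,T;H^{2s}(\Gamma))$ only for $s<1/2$, \emph{strictly}, so the harmonic extension is bounded in $L^2(0,T;H^{3/2-\epsilon}(\Omega))$ for every $\epsilon>0$ but not for $\epsilon=0$. Interpolating with $L^\infty(0,T;H^{1/2}(\Omega))$ at exponent $\theta=1/4$ gives ${\bf w}_{\Delta t}$ bounded in $L^{8/3}(0,T;H^{5/4-\epsilon'}(\Omega))$, which embeds into $L^{8/3}(0,T;W^{1,8/3-\delta}(\Omega))$ for some $\delta>0$ — not into $W^{1,8/3}$. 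Then $|\nabla{\bf w}_{\Delta t}|^2\in L^{4/3}(0,T;L^{4/3-\delta'}(\Omega))$, and the H\"older pairing with ${\bf u}_{\Delta t}\in L^4(0,T;L^4(\Omega))$ needs $\tfrac{1}{4/3-\delta'}+\tfrac14\le1$, which fails for every $\delta'>0$. Shifting the interpolation parameter or trading time for space integrability does not help: every $L^p$ bound on $\nabla{\bf w}_{\Delta t}$ available here ultimately traces back to the strict inequality $s<1/2$, and the resulting exponents are always on the wrong side of the H\"older borderline.

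The remainder \emph{does} vanish, but one has to reach for the dissipation estimate rather than the interpolation chain. Item~3 of Proposition~\ref{Stability} gives $\sum_n\|\bdeta^{n+1}_{\Delta t}-\bdeta^n_{\Delta t}\|^2_{H^2(\Gamma)}\le C$, hence $\|{\bf v}^*_{\Delta t}\|_{L^2(0,T;H^2(\Gamma))}\le C/\sqrt{\Delta t}$. Grisvard's regularity on the convex polygon ($H^{11/6}(\Gamma)\hookrightarrow W^{5/3,3}(\Gamma)\mapsto W^{2,3}(\Omega)\hookrightarrow C^{1,1/3}(\overline\Omega)$, cf.\ \eqref{ALEreg1}--\eqref{ALEreg2}) gives $\|\nabla{\bf w}_{\Delta t}\|_{L^2(0,T;L^\infty(\Omega))}\le C/\sqrt{\Delta t}$, while $\|\nabla{\bf w}_{\Delta t}\|_{L^2(0,T;L^2(\Omega))}\le C$ uniformly. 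Therefore
\begin{equation*}
\Delta t\int_0^T\!\!\int_\Omega|\nabla{\bf w}_{\Delta t}|^2\,|{\bf u}_{\Delta t}|\,|{\bf q}_{\Delta t}|
\;\le\;\Delta t\,\|{\bf q}_{\Delta t}\|_{L^\infty L^\infty}\,\|{\bf u}_{\Delta t}\|_{L^\infty L^2}\,\|\nabla{\bf w}_{\Delta t}\|_{L^2L^\infty}\,\|\nabla{\bf w}_{\Delta t}\|_{L^2L^2}
\;\le\; C\sqrt{\Delta t}\;\to\;0.
\end{equation*}
So your decomposition of the discrete Jacobian increment is a perfectly valid alternative to the paper's mean-value-theorem argument, and the conclusion is correct, but the remainder estimate as written does not close; the missing ingredient is the $1/\sqrt{\Delta t}$ bound coming from the numerical dissipation term, not the interpolation spaces of Proposition~\ref{AdditionalReg}.
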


\proof

Let us first consider the term that contains the fluid acceleration $\partial_t\tilde{\bf u}_{\Delta t}$. 
We use the definition of approximate solutions and test functions, and the summation by parts formula to obtain
$$
\int_0^T\int_{\Omega}T_{\Delta t}J_{\Delta t}\partial_t\tilde{\bf u}_{\Delta t}\cdot{\bf q}_{\Delta t}=\sum_{n=0}^{N-1}\int_{\Omega}J^n_{\Delta t}\big ({\bf u}_{\Delta t}^{n+1}-{\bf u}^n_{\Delta t}\big )\cdot{\bf q}_{\Delta t}^{n+1}=\int_{\Omega}J^{N}_{\Delta t}{\bf u}^{N}_{\Delta t}\cdot{\bf q}^{N+1}_{\Delta t}
$$
$$
-\int_{\Omega}J^{0}_{\Delta t}{\bf u}^{0}_{\Delta t}\cdot{\bf q}^{1}_{\Delta t}
-\sum_{n=0}^{N-1}\int_{\Omega}{\bf u}^{n+1}_{\Delta t}\cdot\big (J^{n+1}_{\Delta t}{\bf q}^{n+2}_{\Delta t}-J^{n}_{\Delta t}{\bf q}^{n+1}_{\Delta t}\big ).
$$
Notice that by construction we have ${\bf q}_{\Delta t}^n=0$, $n\geq N$. 
By adding and subtracting ${\bf q}^{n+1}_{\Delta t}J^{n+1}_{\Delta t}$ we can write
$$
\sum_{n=0}^{N-1}\int_{\Omega}{\bf u}^{n+1}_{\Delta t}\cdot\big (J^{n+1}_{\Delta t}{\bf q}^{n+2}_{\Delta t}-J^{n}_{\Delta t}{\bf q}^{n+1}_{\Delta t}\big )=\sum_{n=0}^{N-1}\int_{\Omega}{\bf u}^{n+1}_{\Delta t}\cdot{\bf q}^{n+1}_{\Delta t}\big (J^{n+1}_{\Delta t}-J^n_{\Delta t}\big )
+\sum_{n=0}^{N-1}\int_{\Omega}{\bf u}^{n+1}_{\Delta t}\cdot\big ({\bf q}^{n+2}_{\Delta t}-{\bf q}^{n+1}_{\Delta t})J^{n+1}_{\Delta t}.
$$
By plugging this calculation back into the above formula we get
$$
\int_0^T\int_{\Omega}T_{\Delta t}J_{\Delta t}\partial_t\tilde{\bf u}_{\Delta t}\cdot{\bf q}_{\Delta t}=
-\int_{\Omega}J^{0}_{\Delta t}{\bf u}^{0}_{\Delta t}\cdot{\bf q}^{1}_{\Delta t}
-\int_0^T\int_{\Omega} \frac{J_{\Delta t}-T_{\Delta t}J_{\Delta t}}{\Delta t}{\bf u}_{\Delta t}\cdot{\bf q}_{\Delta t}
-\int_0^T\int_{\Omega}T_{\Delta t}J_{\Delta t}{\bf u}_{\Delta t}\cdot d{\bf q}_{\Delta t}.
$$
Therefore, the two terms on the left hand side in the statement of Proposition~\ref{ConvJDer} are equal to
\begin{align}
&\rho_F\int_0^T\int_{\Omega}T_{\Delta t}J_{\Delta t}\partial_t\tilde{\bf u}_{\Delta t}\cdot{\bf q}_{\Delta t}+\frac{\rho_F}{2}\int_0^T\int_{\Omega}\frac{J_{\Delta t}-T_{\Delta t}J_{\Delta t}}{\Delta t}{\bf u}_{\Delta t}\cdot{\bf q}_{\Delta t}
\nonumber
\\
=-&\rho_F\int_0^T \int_{\Omega} T_{\Delta t}J_{\Delta t}{\bf u}_{\Delta t}\cdot d{\bf q}_{\Delta t}-\rho_F\int_{\Omega}J_0{\bf u}_0\cdot{\bf q}^1_{\Delta t}-\frac{\rho_F}{2}\int_0^T\int_{\Omega}\frac{J_{\Delta t}-T_{\Delta t}J_{\Delta t}}{\Delta t}{\bf u}_{\Delta t}\cdot{\bf q}_{\Delta t}.
\label{pre-limit}
\end{align}
We can pass to the limit in the first two terms on the right by using Lemma~\ref{ApproxDer}, Corollary~\ref{StrongConvergences},
and Corollary~\ref{u_convergence}. 
Passing to the limit in the third term on the right is not that straight forward. 
We want to show that 
$$
\int_0^{T}\int_{\Omega}\frac{J_{\Delta t}-T_{\Delta t}J_{\Delta t}}{\Delta t}{\bf u}_{\Delta t}\cdot{\bf q}\to
\int_0^T \int_{\Omega}J^{\eta}(\nabla^{\eta}\cdot{\bf w}^{\eta}){\bf u}\cdot{\bf q}.
$$
The main problem is passing to the limit in the term 
$({J_{\Delta t}-T_{\Delta t}J_{\Delta t}})/{\Delta t}$,
since it is not clear that it converges 
to $J^\eta_t$, which, as stated in \eqref{J_t}, is equal to 
$\partial_t J^\eta = J^\eta (\nabla^\eta \cdot {\bf w}^\eta)$.

The plan is to explicitly calculate 
 $J_{\Delta t}-T_{\Delta t}J_{\Delta t}$ by using the mean value theorem.
 For this purpose we first recall that the integral with respect to time of  $J_{\Delta t}-T_{\Delta t}J_{\Delta t}$
 is equal to the sum over $n=1,\dots,N$ of the differences $J^{n+1}_{\Delta t}-J^{n}_{\Delta t}$ times $\Delta t$.
Now, since 
$$
J^n_{\Delta t}=\det \nabla {\bf A}_{\Delta t}^n>0,\quad\Delta t>0,n=1,\dots N,
$$
we have
$$
J^{n+1}_{\Delta t}-J^{n}_{\Delta t} = \det \nabla {\bf A}_{\Delta t}^{n+1} - \det \nabla {\bf A}_{\Delta t}^n.
$$
We apply the mean value theorem on the determinant function when the difference 
$\nabla {\bf A}_{\Delta t}^{n+1} -  \nabla {\bf A}_{\Delta t}^n$ is small.
The mean value theorem says
\begin{equation}\label{MVT}
\det \nabla {\bf A}_{\Delta t}^{n+1} - \det \nabla {\bf A}_{\Delta t}^n = D(\det)(\nabla {\bf A}_{\Delta t}^{n,\beta}) (\nabla {\bf A}_{\Delta t}^{n+1} -  \nabla {\bf A}_{\Delta t}^n),
\end{equation}
where $ D(\det)(\nabla {\bf A}_{\Delta t}^{n,\beta})$ denotes the derivative of the determinant function evaluated at an intermediate point

\if 1 = 0
associated with the structure 
$$
\bdeta^{n,\beta} = \bdeta^n + \beta(\bdeta^{n+1} - \bdeta^n),\ {\rm for\ some\ } \beta\in[0,1].
$$
\fi

$$
\nabla {\bf A}_{\Delta t}^{n,\beta} := \nabla {\bf A}_{\Delta t}^n + \beta ( \nabla {\bf A}_{\Delta t}^{n+1} - \nabla {\bf A}_{\Delta t}^n), \quad
{\rm for\ some\ } \beta \in [0,1].
$$
The functional $ D(\det)(\nabla {\bf A}_{\Delta t}^{n,\beta})$ acts on the difference $(\nabla {\bf A}_{\Delta t}^{n+1} -  \nabla {\bf A}_{\Delta t}^n)$.

To explicitly calculate the right hand-side of \eqref{MVT}, we use the formula for the derivative of the determinant, evaluated at ${\bf F}$,
acting on ${\bf U}$, given by
$$
D(\det)({\bf F}){\bf U}=(\det {\bf F}){\rm tr}({\bf U} {\bf F}^{-1}).
$$
By using this formula we get
$$
J^{n+1}_{\Delta t}-J^{n}_{\Delta t}=\det\big (\nabla {\bf A}_{\Delta t}^{n,\beta}\big ){\rm tr}\big (({\nabla {\bf A}_{\Delta t}}^{n+1}-{\nabla {\bf A}_{\Delta t}}^n)
({\nabla {\bf A}^{n,\beta}_{\Delta t}})^{-1}\big ),\  {\rm for\  some\ } \beta \in [0,1].
$$ 
Now, from \eqref{nablaeta} we have that the factor on the right hand side containing the trace is equal to
$$
{\rm tr} \left( \nabla \left(\frac{{\bf A}^{n+1}_{\Delta t} - {\bf A}^n_{\Delta t}}{\Delta t}\right)  (\nabla {\bf A}^{n,\beta}_{\Delta t})^{-1} \right) =
\nabla^{n,\beta} \cdot \frac{{\bf A}^{n+1}_{\Delta t} - {\bf A}^n_{\Delta t}}{\Delta t} = \nabla^{n\beta} \cdot {\bf w}^{n+1}_{\Delta t},
$$
where $\nabla^{n\beta} = \nabla^{\eta^n} + \beta\left(\nabla^{\eta^{n+1}} - \nabla^{\eta^n}\right)$,
and by denoting $\det \nabla {\bf A}_{\Delta t}^{n,\beta}$ with $J^{n,\beta}_{\Delta t}$ in the spirit of \eqref{ALEVelDiscrete}, we get 
$$
\frac{J^{n+1}_{\Delta t}-J^{n}_{\Delta t}}{\Delta t}=J^{n,\beta}_{\Delta t} \left(\nabla^{n\beta} \cdot {\bf w}^{n+1}_{\Delta t}\right).
$$
Thus, we have just calculated that
\begin{equation}\label{GeomConv2}
\displaystyle{\int_{\Omega}\frac{J^{n+1}_{\Delta t}-J^n_{\Delta t}}{\Delta t}{\bf u}^{n+1}_{\Delta t}\cdot{\bf q}^{n+1}_{\Delta t}
=\int_{\Omega}J^{n,\beta}\big (\nabla^{\eta^{n,\beta}}\cdot{\bf w}^{n+1}_{\Delta t}\big ){\bf u}^{n+1}_{\Delta t}\cdot{\bf q}^{n+1}_{\Delta t}}.
\end{equation}
By taking the sum over $n=0,\dots,N-1$ in equation~\eqref{GeomConv2} we get
\begin{equation}\label{GeomConv3}
\displaystyle{\int_0^{T-\Delta t}\int_{\Omega}\frac{J_{\Delta t}-T_{\Delta t}J_{\Delta t}}{\Delta t}{\bf u}_{\Delta t}\cdot{\bf q}_{\Delta t}
=\int_0^{T-\Delta t}\int_{\Omega}J^{\beta}_{\Delta t}\big (\nabla^{\eta_{\Delta t}^{\beta}}\cdot{\bf w}_{\Delta t}\big ){\bf u}_{\Delta t} \cdot{\bf q}_{\Delta t}}.
\end{equation}
Now we can pass to the limit as $\Delta t \to 0$ to obtain:
\begin{equation}\label{GeomConv4}
\displaystyle{\int_0^{T}\int_{\Omega}\frac{J_{\Delta t}-T_{\Delta t}J_{\Delta t}}{\Delta t}{\bf u}_{\Delta t}\cdot{\bf q}
\to
\int_0^T \int_{\Omega}J^{\eta}(\nabla^{\eta}\cdot{\bf w}^{\eta}){\bf u}\cdot{\bf q}}.
\end{equation}

Finally, with this conclusion we can pass to the limit in \eqref{pre-limit} to obtain 
$$
\rho_F\int_0^T\int_{\Omega}T_{\Delta t}J_{\Delta t}\partial_t\tilde{\bf u}_{\Delta t}\cdot{\bf q}_{\Delta t}+\frac{\rho_F}{2}\int_0^T\int_{\Omega}\frac{J_{\Delta t}-T_{\Delta t}J_{\Delta t}}{\Delta t}{\bf u}_{\Delta t}\cdot{\bf q}_{\Delta t}
\to
$$
$$
- \rho_F\int_0^T\int_{\Omega}J^{\eta}{\bf u}\cdot\partial_t{\bf q}
-\int_{\Omega}J_0{\bf u}_0{\bf q}(0,.)
-\frac{\rho_F}{2}\int_0^T \int_{\Omega}J^{\eta}(\nabla^{\eta}\cdot{\bf w}^{\eta}){\bf u}\cdot{\bf q},
$$
which is exactly the statement of the Proposition.
\qed

\if 1 = 0

First first we introduce the following notation:
\begin{equation}\label{Notation1}
{\bf F}^{n}_{\Delta t}=\nabla {\bf A}_{\Delta t}^n.
\end{equation}
Therefore we perform the calculation similar to the one in derivation of the weak formulation in the continuous case. We use the following formula for derivative of the determinant (see e.g. \cite{Gur}, p. 23):
$$
D(\det)(A)U=(\det A){\rm tr}(UA^{-1}).
$$
For the moment, let us fix $\Delta t>0$ and for the simplicity of notation omit subscript $\Delta t$ in the following computations.
Let us define ${\bf F}^{n,\beta}={\bf F}^n+\beta({\bf F}^{n+1}-{\bf F}^n)$, $0\leq\beta\leq 1$. Now by the mean value theorem we have:
$$
J^{n+1}-J^{n}=\det\big ({\bf F}^{n,\beta}\big ){\rm tr}\big (({\bf F}^{n+1}-{\bf F}^n)({\bf F}^{n,\beta})^{-1}\big ),
$$
for some $\beta \in [0,1]$. Notice that ${\bf A}^{\eta^n+\beta(\eta^{n+1}-\eta^n)}$ is again a good ALE mapping and therefore we have:
\begin{equation}\label{GeomConv2}
\displaystyle{\frac{1}{2}\int_{\Omega}\frac{J^{n+1}-J^n}{\Delta t}{\bf u}^{n+1}\cdot{\bf q}^{n+1}=\int_{\Omega}J^{n,\beta}\big (\nabla^{\eta^{n,\beta}}\cdot{\bf w}^{n+1}\big )\cdot{\bf q}^{n+1}}
\end{equation}

**********

TODO: $\det\big ({\bf F}^{n,\beta}\big )$ becomes $J^{n,\beta}$, $({\bf F}^{n,\beta})^{-1}$ becomes $\nabla^{\eta^{n,\beta}}$,
and $({\bf F}^{n+1}-{\bf F}^n)$ divided by $\Delta t$ will be related to ${\bf w}^{n+1}$.

***************

Now we can sum equation~\eqref{GeomConv2} for $n=0,\dots,N-1$ an get
\begin{equation}\label{GeomConv3}
\displaystyle{\frac{1}{2}\int_0^{T-\Delta t}\int_{\Omega}\frac{J_{\Delta t}-T_{\Delta t}J_{\Delta t}}{\Delta t}{\bf u}_{\Delta t}\cdot{\bf q}_{\Delta t}=\frac{1}{2}\int_0^{T-\Delta t}\int_{\Omega}J^{\beta}_{\Delta t}\big (\nabla^{\eta_{\Delta t}^{\beta}}\cdot{\bf w}_{\Delta t}\big )\cdot{\bf q}_{\Delta t}}
\end{equation}
Therefore we can pass to the limit, i.e. we have:
\begin{equation}\label{GeomConv4}
\displaystyle{\frac{1}{2}\int_0^{T}\int_{\Omega}\frac{J_{\Delta t}-T_{\Delta t}J_{\Delta t}}{\Delta t}{\bf u}_{\Delta t}\cdot{\bf q}\to\frac{1}{2}\int_{\Omega}J^{\eta}(\nabla^{\eta}\cdot{\bf w}^{\eta}){\bf u}\cdot{\bf q}}
\end{equation}
\qed

\fi 

Therefore, we have shown that in the limit as $\Delta t \to 0$, the approximate solutions constructed in Section~\ref{Sec:Approx}
based on the Lie operator splitting scheme, converge to a weak solution of problem \eqref{NSP}-\eqref{ProblemIC}.
More precisely, we have shown the following result.

\begin{lemma}\label{final_convergence}
There exists a $T > 0$, and a subsequence of approximate solutions $({\bf u}_{\Delta t}, \bdeta_{\Delta t})$, constructed in Section~\ref{Sec:Approx},
such that $({\bf u}_{\Delta t}, \bdeta_{\Delta t})$ converges, as $\Delta t \to 0$, 
to a function $({\bf u},\bdeta) \in {\cal{W}}^\eta(0,T)$, 
which is a weak solution to problem \eqref{NSP}-\eqref{ProblemIC} in the sense of Definition~\ref{WeakALE}. 
The weak form in Definition~\ref{WeakALE} 
 holds for all the test functions $({\bf q},{\bpsi}) \in {\cal{X}}^\eta(0,T)$ which are dense in ${\cal{Q}}^\eta(0,T)$,
 and are obtained as the limits of the test functions $({\bf q}_{\Delta t}, \bdeta_{\Delta t}) \in {\cal{V}}_{\Delta t}$ 
 constructed in Section~\ref{sec:test_functions}.
\end{lemma}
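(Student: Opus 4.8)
\proof
The plan is to pass to the limit $\Delta t\to 0$ in the approximate coupled weak formulation~\eqref{ApproxEq}, term by term, and then to remove the restriction to the dense subset ${\mathcal X}^{\eta}(0,T)$ of admissible test functions. All the ingredients are already in place: the uniform bounds of Proposition~\ref{Stability}, the weak and weak$^*$ limits of Lemma~\ref{weak_convergence}, the strong convergences of Corollary~\ref{u_convergence}, Theorem~\ref{ConvEta}, Corollaries~\ref{ConvEtaC1} and~\ref{StrongConvergences}, and the convergence of the approximate test functions from Lemmas~\ref{TestFunctionConv} and~\ref{ApproxDer}. First I would dispose of the two terms in the first line of~\eqref{ApproxEq}, which encode the discrete geometric conservation law of the family ${\bf A}^n_{\Delta t}$: by Proposition~\ref{ConvJDer} these converge to $-\rho_F\int_0^T\int_{\Omega}J^{\eta}{\bf u}\cdot\partial_t{\bf q}-\frac{\rho_F}{2}\int_0^T\int_{\Omega}J^{\eta}(\nabla^{\eta}\cdot{\bf w}^{\eta}){\bf u}\cdot{\bf q}-\int_{\Omega}J_0{\bf u}_0\cdot{\bf q}(0,\cdot)$, which are precisely the corresponding terms of~\eqref{WeakALEFormula}. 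This is the step containing the only genuine difficulty, and it has already been resolved.

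For the remaining terms I would organize the passage to the limit so that in every product a weakly convergent factor is paired with strongly convergent ones. In the convective terms $J_{\Delta t}\to J^{\eta}$ and $(\nabla{\bf A}_{\Delta t})^{-1}\to(\nabla{\bf A}_{\eta})^{-1}$ in $L^{\infty}(0,T;C(\overline{\Omega}))$, $T_{\Delta t}{\bf u}_{\Delta t}\to{\bf u}$ in $L^2(0,T;H^{2s}(\Omega))$ and ${\bf w}_{\Delta t}\to{\bf w}^{\eta}$ in $L^2(0,T;H^1(\Omega))$ (Lemma~\ref{Translations} and Corollaries~\ref{u_convergence},~\ref{StrongConvergences}), ${\bf q}_{\Delta t}\to{\bf q}$ in $L^{\infty}(0,T;C^1(\overline{\Omega}))$, while $\nabla{\bf u}_{\Delta t}\rightharpoonup\nabla{\bf u}$ weakly in $L^2(0,T;L^2(\Omega))$; hence each summand is a product of strongly convergent $L^2(0,T;L^2(\Omega))$ factors with a weakly convergent one (or of two strongly convergent $L^2$ factors), and passes to the limit. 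In the viscous term $J_{\Delta t}{\bf D}^{\eta_{\Delta t}}({\bf q}_{\Delta t})\to J^{\eta}{\bf D}^{\eta}({\bf q})$ strongly while ${\bf D}({\bf u}_{\Delta t})\rightharpoonup{\bf D}({\bf u})$ weakly in $L^2$. The boundary integrals on $\Gamma_i$, $i\in III$, and the slip integrals over $\Gamma$ pass using the strong convergence of the traces of ${\bf u}_{\Delta t}$ on $\Gamma$ in $L^2(0,T;L^2(\Gamma))$ --- which follows from ${\bf u}_{\Delta t}\to{\bf u}$ in $L^2(0,T;H^{2s}(\Omega))$ with $2s>1/2$ and continuity of the trace operator --- together with ${\bf v}_{\Delta t}\to{\bf v}$ in $L^2(0,T;H^{2s}(\Gamma))$, $\btau_{\Delta t}\to\btau^{\eta}$ and $S_{\Delta t}\to S^{\eta}$ in $L^{\infty}(0,T;C(\overline{\Gamma}))$, and $({\bf q}_{\Delta t},\bpsi_{\Delta t})\to({\bf q},\bpsi)$ uniformly. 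The elastic term passes because ${\mathcal L}_e$ is linear and continuous on $H^2_0(\Gamma)$, $\bdeta_{\Delta t}\rightharpoonup\bdeta$ weakly$^*$ in $L^{\infty}(0,T;H^2_0(\Gamma))$ and $\bpsi_{\Delta t}\to\bpsi$ in $C([0,T];H^2_0(\Gamma))$; the source term passes since ${\bf R}_{\Delta t}\to{\bf R}$ in $L^2(0,T;H^1(\Omega_{\rm max})')$ and ${\bf q}_{\Delta t}\to{\bf q}$.

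The structure inertia term $\rho_S h\int_0^T\int_{\Gamma}\partial_t\tilde{\bf v}_{\Delta t}\bpsi_{\Delta t}$ requires one additional manipulation. Since $\tilde{\bf v}_{\Delta t}$ is continuous and affine on each subinterval, a summation by parts in time converts this integral into $-\rho_S h\int_0^T\int_{\Gamma}\tilde{\bf v}_{\Delta t}\,d\bpsi_{\Delta t}-\rho_S h\int_{\Gamma}{\bf v}_0\cdot\bpsi(0)$, where $d\bpsi_{\Delta t}$ is the difference quotient of $\bpsi$ and the boundary contribution at $t=T$ vanishes because $\bpsi$ has compact support in $[0,T)$. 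As $d\bpsi_{\Delta t}\to\partial_t\bpsi$ uniformly and $\tilde{\bf v}_{\Delta t}\to{\bf v}$ in $L^2(0,T;L^2(\Gamma))$ --- it has the same limit as ${\bf v}_{\Delta t}$, by the standard estimate $\|{\bf v}_{\Delta t}-\tilde{\bf v}_{\Delta t}\|^2_{L^2(0,T;L^2(\Gamma))}\le\frac{\Delta t}{3}\sum_n\|{\bf v}^{n+1}_{\Delta t}-{\bf v}^n_{\Delta t}\|^2_{L^2(\Gamma)}$ controlled by Proposition~\ref{Stability} --- this term converges to $-\rho_S h\int_0^T\int_{\Gamma}{\bf v}\cdot\partial_t\bpsi-\rho_S h\int_{\Gamma}{\bf v}_0\cdot\bpsi(0)$. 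To match~\eqref{WeakALEFormula} I would then identify ${\bf v}=\partial_t\bdeta$: since $\partial_t\tilde\bdeta_{\Delta t}={\bf v}^*_{\Delta t}$ by~\eqref{derivativeeta}, while $\tilde\bdeta_{\Delta t}\to\bdeta$ and ${\bf v}^*_{\Delta t}\to{\bf v}$, uniqueness of limits in ${\mathcal D}'((0,T)\times\Gamma)$ gives $\partial_t\bdeta={\bf v}$.

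It remains to check that the limit $({\bf u},\bdeta)$ lies in ${\mathcal W}^{\eta}(0,T)$ and that the weak formulation extends to all test functions. The transformed divergence-free condition $\nabla^{\eta}\cdot{\bf u}=0$ and the conditions on $\Sigma$ are obtained by letting $\Delta t\to 0$ in $\nabla^{\eta_{\Delta t}}\cdot{\bf u}_{\Delta t}=0$ and in the boundary conditions satisfied by ${\bf u}_{\Delta t}$, using the uniform convergence $(\nabla{\bf A}_{\Delta t})^{-1}\to(\nabla{\bf A}_{\eta})^{-1}$, the weak convergence $\nabla{\bf u}_{\Delta t}\rightharpoonup\nabla{\bf u}$ and the trace convergence; the kinematic coupling follows by passing to the limit in $({\bf u}_{\Delta t}-{\bf v}_{\Delta t})\cdot\bnu_{\Delta t}=0$ (which holds a.e. in $(0,T)$ since all three factors are indexed at the same level $n+1$), using the strong trace convergence of ${\bf u}_{\Delta t}$ on $\Gamma$, ${\bf v}_{\Delta t}\to{\bf v}$, $\bnu_{\Delta t}\to\bnu^{\eta}$ uniformly, and ${\bf v}=\partial_t\bdeta$. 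This yields~\eqref{WeakALEFormula} for every $({\bf q},\bpsi)\in{\mathcal X}^{\eta}(0,T)$, and the extension to all $({\bf q},\bpsi)\in{\mathcal Q}^{\eta}(0,T)$ is immediate from the density of ${\mathcal X}^{\eta}(0,T)$ in ${\mathcal Q}^{\eta}(0,T)$ and the continuity of each functional in~\eqref{WeakALEFormula} with respect to the ${\mathcal Q}^{\eta}(0,T)$-topology. Apart from the geometric-conservation-law term already settled in Proposition~\ref{ConvJDer}, the only points needing care are the discrete integration by parts in the structure inertia term and the careful bookkeeping of convergence modes guaranteeing each nonlinear product passes to the limit; I expect the former to be the trickiest remaining detail.
\qed
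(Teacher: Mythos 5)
Your proposal is correct and follows essentially the same route as the paper. The paper does not give a separate detailed proof of this lemma: it simply observes (in the paragraph following \eqref{ApproxEq}) that, given the convergence machinery of Theorem~\ref{compactness}, Corollaries~\ref{ConvEtaC1} and \ref{StrongConvergences}, and Lemma~\ref{TestFunctionConv}, one can pass to the limit directly in all terms of \eqref{ApproxEq} except the two geometric-conservation-law terms, which are resolved in Proposition~\ref{ConvJDer}; the lemma then follows as a summary. Your write-up fleshes out exactly this plan --- Proposition~\ref{ConvJDer} for the GCL terms, weak-strong pairing for the convective and viscous terms, trace convergence (via $H^{2s}$ with $s\in(1/4,1/2)$) for the interface and slip integrals, the discrete summation by parts in the structure inertia term together with the identification $\partial_t\bdeta={\bf v}$ via \eqref{derivativeeta}, and density of ${\mathcal X}^\eta(0,T)$ in ${\mathcal Q}^\eta(0,T)$ to conclude --- which is precisely the argument the paper intends but leaves implicit.
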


We are now ready to complete the proof of the main existence result, stated in Theorem~\ref{MainResult}.
From Lemma~\ref{final_convergence} we obtain the existence of a weak solution defined on the time interval $(0,T)$, 
where $T>0$ is determined by \eqref{Tdescrete}. To obtain the energy estimate \eqref{EI1} from Theorem~\ref{MainResult} 
we consider discrete energy inequalities stated in points 1. and 2. in Proposition~\ref{Stability} and let $\Delta t \to 0$.
Due to the lower semi-continuity property of norms, we can take the limit in points 1. and 2. in Proposition~\ref{Stability}
to recover the energy estimate \eqref{EI1}.

This concludes the constructive proof to the main existence result, stated in Theorem~\ref{MainResult}.
\vskip 0.1in

\section{Conclusions}
This paper provides a constructive existence proof for a weak solution to a nonlinear moving boundary problem between an incompressible, viscous fluid and 
an elastic shell, with the Navier slip boundary condition holding at the fluid-structure interface. 
Due to different types of boundary conditions holding at each piece of the fluid domain boundary the usual
vorticity formulation, commonly used in a good agreement with the Navier slip boundary condition, 
does not appear helpful for this problem.
The problem is motivated by studying fluid-structure interaction between blood flow and cardiovascular tissue, 
whether natural or bio-artificial, which include cell-seeded tissue constructs, 
which consists of grooves in tissue scaffolds that are lined with cells giving rise to ``rough'' fluid-structure interfaces.
To filter out the small scales of the rough fluid domain boundary, 
effective boundary conditions based on the Navier slip condition have been used 
in various applications, see a review paper by Mikeli\'{c} \cite{MikelicReview} and the references therein. 
The present work is the {\em  first existence result involving the 
Navier slip boundary condition for a fluid-structure interaction problem
with elastic structures}. 
Dealing with the slip condition introduces 
several mathematical difficulties. The main one is associated with the fact that the fluid viscous dissipation can no-longer be used as a regularizing
mechanism for the tangential component of velocity of the fluid-structure interface, as is the case with the no-slip condition, where the regularity of
the fluid-structure interface is directly influenced by the fluid viscosity through the trace of the fluid velocity at the interface. 
As a result, new compactness arguments had to be used in the existence proof, 
which are based on Simon's characterization of compactness in $L^2(0,T;B)$ spaces \cite{Simon},  
and on interpolation of the classical Sobolev spaces with real exponents $H^s$ (or alternatively Nikolskii spaces $N^{s,p} \cite{Simon2})$. Furthermore, to deal with the non-zero longitudinal displacement and keep the behavior of fluid-structure interface ``under control'', 
we had to consider higher-order terms in the structure model
given by the bending rigidity of {\em shells}. 
The linearly elastic membrane model was not tractable. 
Due to the non-zero longitudinal displacement additional nonlinearities appear in the problem
that track the  geometric quantities such as surface measure, the interface tangent and normal, and the Jacobian of the ALE mapping,
 which are now included explicitly in the weak formulation of the problem, and cause various difficulties 
 in the analysis. This is one of the reasons why our existence result is local in time, i.e., it holds for the time interval $(0,T)$ for
 which we can guarantee that the fluid domain will not degenerate in the sense that the ALE mapping ${\bf A}_\eta(t)$ remains injective in time
 as the fluid domain moves, and the Jacobian $J^\eta$ of the ALE mapping remains strictly positive, see Figure~\ref{degeneration}. 
 
 Degeneration of the fluid domain is associated with the ``contact problem'' between structures, as shown in  Figure~\ref{degeneration}. 
It is well known that due to the no-collision paradox associated with the no-slip condition \cite{HilTak2,HilTak,Starovoitov}, contact between two ``smooth'' structures 
is not possible in the case when the Jacobian $J^\eta$ becomes zero, corresponding to the situation in Figure~\ref{degeneration} right.
 This gives rise to various difficulties in the numerical simulation and modeling of problems such as, e.g., heart valve closure,
 if the no-slip boundary condition is used at the fluid-structure interface.
Our analysis presented in this paper is a first step in the direction towards studying contact between elastic structures in flows with slip boundary condition, which promises to shed new light on modeling of various biological phenomena, including the closure of human heart valves.
Further research in this direction is necessary.

\section{Acknowledgements}
Muha's research has been supported in part by the Croatian Science Foundation (Hrvatska Zaklada za Znanost) grant number 9477
and by the US National Science Foundation under grant DMS-1311709.
\v{C}ani\'{c}'s research has been supported by the US National Science Foundation under grants
DMS-1318763, DMS-1311709, DMS-1262385 (joint funding with the National Institutes of Health) and DMS-1109189. 

\vskip 0.2in
\noindent
{\bf Conflict of Interest Statement.} The authors confirm that they have no conflicts of interest indirectly or directly related to the presented research. 
\if 1 = 0
\noindent
{\bf Remark about the main result.} Notice that we proved only the local in time existence of a weak solution to the FSI problem 
with the slip condition holding at the fluid-structure interface. The length of the existence  time interval $(0,T)$ is determined by the time of contact 
between different parts of the structure. Namely, our existence result holds until the structure becomes in contact with another point on the structure,
i.e., until the injectivity of the ALE mapping, defined in \eqref{PositivityJ}, is satisfied. In contrast to the no-slip condition when 
contact is not possible in finite time (assuming certain regularity of the interface and the incompressibility of the fluid), see \cite{Grandmont},
the case with the slip condition allows contact between different parts of the structure, and so it would be non-physical to expect global 
existence of our weak solution for ``general'' data.
\fi

\if 1 = 0
\subsection{Appendix: An alternate splitting scheme}

Even though the presented splitting is well suited for the existence proof, its numerical implementation may have issues with lower accuracy. 
The reason for this is that the structure sub-problem does not incorporate any explicit forcing from the fluid, 
and ``feels'' the fluid only via the initial conditions. 
This problem has already been noted for the classical kinematically coupled scheme, and several modifications have been proposed to ensure better accuracy (see e.g. \cite{MarSun,SunBorMar,Fernandez3}). Here we propose another version of the Lie splitting scheme for Problem~\ref{SlipP},
which uses the fluid tangential stress to load the structure in order to increase the accuracy. 
In can be shown that the proposed modification also gives a stable scheme. 
We present here only the differential form of this modified Lie splitting scheme.

\noindent
{\bf THE STRUCTURE SUB-PROBLEM}:
\begin{equation}\label{StrcutureSubMod}
\left .
\begin{array}{c}
\displaystyle{\rho_S h\frac{{\bf v}^{n+1/2}-{\bf v}^n}{\Delta t}-{\mathcal L}_e\bdeta^{n+1}+\frac{1}{\alpha}({\bf v}^{n+1}\cdot\btau^n)\btau^n=\frac{1}{\alpha}({\bf u}^{n}\cdot\btau^n)\btau^n,}\\ \\
\displaystyle{\frac{\bdeta^{n+1}-\bdeta^n}{\Delta t}={\bf v}^{n+1/2},}
\end{array}\right \}\;{\rm on}\; \Gamma,
\end{equation}
{\bf THE FLUID SUB-PROBLEM}:
\begin{equation}\label{FluidSubMod}
\begin{array}{c}
\rho_F\displaystyle{\frac{{\bf u}^{n+1}-{\bf u}^n}{\Delta t}+\rho_F(({\bf u}^n-{\bf w}^{n+1})\cdot\nabla^{n+1}}) {\bf u}^{n+1}=\nabla^{{n+1}}\cdot \bsigma^{{n+1}}+{\bf R}^{n+1}\;{\rm in}\; \Omega,\\ \\
\displaystyle{\frac{{\bf v}^{n+1}-{\bf v}^{n+1/2}}{\Delta t}\cdot\bnu^{n+1}=-\bsigma^{{n+1}}\bnu^{{n+1}}\cdot\bnu^{n+1} ,\; ({\bf u}^{n+1}-{\bf v}^{n+1})\cdot\bnu^{{n+1}}=0}\;{\rm on}\; \Gamma,
\\ \\
\displaystyle{\Big ({\bf u}^{n+1}+\alpha \sigma^{{n+1}}\bnu^{{n+1}}\cdot\btau^{{n+1}}\Big )=({\bf v}^{n+1/2})\cdot\btau^{{n+1}}}\;{\rm on}\; \Gamma,
\end{array}
\end{equation} 
This scheme is better suited for a numerical implementation of an algorithm producing an approximation of the solution to
the fluid-structure interaction Problem \eqref{FSISlip} with the slip boundary condition at the moving interface. 

*******

TODO: Deal with the source term.

**********

\subsection{Proof overview}
In this section we give a the short overview of the proof and explain the main difficulties and novelties in comparison with the previous work. 
The main idea is to decouple the problem into the fluid and structure sub-problems using the 
Lie (Marchuk-Yanenko) operator-splitting method. In this scheme,
the sub-problems communicate via the initial conditions.
The resulting sub-problems are semi-discretized in time over some time interval $(0,T)$
via the backward Euler scheme. Each of the resulting semi-discretized sub-problems (the fluid and structure sub-problems)
is then linearized, giving rise to a sequence of linear elliptic sub-problems. The trick is to show that these
elliptic sub-problems, which are "coupled" via the initial conditions, give rise to the solutions that converge, 
as the time-discretization step $\Delta t$ approaches zero,
 to a weak solution of the entire coupled nonlinear problem of mixed parabolic-hyperbolic type.
The main steps of the convergence proof are summarized as follows:

TODO: rewrite the remainder of this section ***********

\begin{enumerate}
\item \textit{Formal energy inequality and construction of injective ALE mapping. Korn's inequality.} (Sections~\ref{Sec:EI} and \ref{ALEConstruction}). We begin by making some formal calculations which will be later rigorously justified by the construction of a weak solution. Namely, we formally derive the energy inequality. Assuming that the regularity of a weak solution is determined by the energy inequality we construct the ALE mapping as a harmonic extension of the interface displacement. Moreover, we use the standard Hilbert interpolation and the elliptic regularity on the domains with corners to prove the regularity of the ALE mapping. Finally we prove the injectivity of ALE mapping (Proposition~\ref{ALEinjectivity}) and the uniform Korn's inequality (Lemma~\ref{Korn}). 
\item \textit{Function spaces and definition of a weak solution.} (Section~\ref{Sec:Defweak}) Motivated by the energy inequality the appropriate functions spaces are formulated and the problem is re-formulated (Definition~\ref{WeakALE}) in the fixed reference by using the constructed ALE mapping. The main theorem is stated (Theorem~\ref{main}). 
\item \textit{Splitting scheme and uniform estimates.} (Section~\ref{Sec:Approx}) The problem is semi-discretized and split into the fluid and the structure sub-problems which are linear. The well-posedness of the each sub-problem is proved. Her we prove the semi-discrete version of the energy inequality uniformly in $\Delta t$ (Proposition~\ref{Stability}) which is one of the key ingredients of the proof.
\item \textit{Approximate solutions and weak convergence.} (Section~\ref{Sec:weak}) After obtaining the uniform estimates, we prove that the approximate solutions are well defined on some time-interval which does not depend on $\Delta t$ and that corresponding ALE mapping is injective on the same time-interval (Proposition~\ref{EtaBound}). Then we can prove the weak and weak* convergence results (Lemma~\ref{weak_convergence}) and identify its limits as a candidates for a weak solution.
\item \textit{Compactness and strong convergence.} (Section~\ref{Sec:strong}) The second key ingredient of the proof is to prove the strong convergence properties of the sequence of the approximate solutions. Since the function spaces change with the solution and there is no regularization of the structure velocity due to the fluid dissipation, we use Simon's characterization of compactness in $L^2(0,T;B)$ spaces \cite{Simon} and interpolation of the classical Sobolev spaces with real exponents $H^s$ (or alternatively Nikolskii spaces $N^{s,p} \cite{Simon2})$. 
\item \textit{Geometric conservation law and the limiting problem.} (Section~\ref{Sec:limiting}) In this section we construct the approximate test functions and prove their limiting properties (Lemmas~\ref{TestFunctionConv} and~\ref{ApproxDer}) which enables us to pass to the limit in the approximate equation~\eqref{ApproxEq}. The last important ingredient in the proof is the treatment of the geometric conservation law (Section~\ref{Sec:GSL}).
\end{enumerate} 
The main difficulties and novelties are the following:
\begin{enumerate}
\item Since we allow both, the tangential and the normal displacement, and more general geometry, the construction and analysis of ALE mapping is more involved and one have to prove that the deformation is injective on some time-interval.
\item Because of the more general geometrical setting one must take special care of the discretization of the terms connected to the geometric conservation law.
\item Our analysis include several different types of boundary conditions and a general linear shell/plate model. 
\item {\bf Due to the slip boundary condition, the tangential fluid velocity is not equal to the tangential structure velocity and therefore there is no additional regularization of the structure velocity due to the fluid dissipation.}
\item There are additional non-linearities in a weak formulation, since some geometrical quantities (surface measure, the tangent, the normal, the Jacobian of the ALE mapping) are now included explicitly in the weak formulation of the problem.
\end{enumerate}
\fi

\bibliographystyle{plain}
\bibliography{Bibliography/myrefs}
\end{document}